\documentclass[10pt]{amsart}

\textwidth13cm

\usepackage{textcomp}
\usepackage{amssymb}
\usepackage{tikz-cd}
\usepackage[all]{xy}
\usepackage{enumerate}

\newcommand{\rar}[1]{\stackrel{#1}{\longrightarrow}}

\newcommand{\hush}{\natural}
\newcommand{\idot}{{\:\raisebox{1pt}{\text{\circle*{1.5}}}}}

\newcommand{\At}{{A_{\idot}}}
\newcommand{\Bt}{{B_{\idot}}}
\newcommand{\Dt}{{D_{\idot}}}
\newcommand{\Mt}{{M_{\idot}}}
\newcommand{\Vt}{{V_{\idot}}}
\newcommand{\Pt}{{P_{\idot}}}

\newcommand{\vH}{\check{H}}

\newcommand{\bC}{{\mathbb C}}

\newcommand{\bF}{{\mathbb F}}

\newcommand{\bZ}{{\mathbb Z}}

\newcommand{\cA}{{\mathcal A}}
\newcommand{\cB}{{\mathcal B}}
\newcommand{\cC}{{\mathcal C}}

\newcommand{\cE}{{\mathcal E}}
\newcommand{\cF}{{\mathcal F}}

\newcommand{\cL}{{\mathcal L}}
\newcommand{\cM}{{\mathcal M}}

\newcommand{\cO}{{\mathcal O}}

\newcommand{\cV}{{\mathcal V}}

\newcommand{\cd}{{\delta}}

\newcommand{\ks}{{\tilde{\kappa}}} 
\newcommand{\CPt}{{CP_{\idot}}}
\newcommand{\Ct}{{CH_{\idot}}}

\newcommand{\Sb}{\overline{S}}

\newcommand{\tA}{\tilde{A}}

\newcommand{\td}{\tilde{d}}
\newcommand{\ti}{\tilde{i}}

\newcommand{\tn}{\tilde{n}}

\newcommand{\tR}{\tilde{R}}
\newcommand{\tF}{\tilde{F}}
\newcommand{\tX}{\tilde{X}}

\newcommand{\tx}{\widetilde{x}}

\def\tn{\widetilde{\nabla}}
\def\td{\widetilde{d}}

\def\tx{\widetilde{x}}

\newcommand{\nc}{\newcommand}

\nc\wh{\widehat}

\nc\on{\operatorname}

\nc\Gr{\on{Gr}}

\nc\Fl{\on{Fl}}

\DeclareMathOperator{\Id}{{Id}}

\newcommand{\limto}{{\displaystyle\lim_{\longrightarrow}}}
\newcommand{\rightlim}{\mathop{\limto}}


\newcommand{\leftlim}{\mathop{\displaystyle\lim_{\longleftarrow}}}
\newcommand{\limfromn}{\leftlim\limits_{\raise3pt\hbox{$n$}}}
\newcommand{\limton}{\rightlim\limits_{\raise3pt\hbox{$n$}}}


\newcommand{\rightlimit}[1]{\mathop{\lim\limits_{\longrightarrow}}\limits%
                    _{\raise3pt\hbox{$\scriptstyle #1$}}}

\newcommand{\leftlimit}[1]{\mathop{\lim\limits_{\longleftarrow}}\limits%
                    _{\raise3pt\hbox{$\scriptstyle #1$}}}

\newcommand{\iso}{\buildrel{\sim}\over{\longrightarrow}}

\newcommand{\mono}{\hookrightarrow}

\DeclareMathOperator{\Spec}{{Spec}}

\DeclareMathOperator{\Res}{{Res}}

\makeatletter

\newcommand{\Rmnum}[1]{\expandafter\@slowromancap\romannumeral #1@}
\makeatother

\newtheorem*{theo}{Theorem}

\newtheorem{Th}{Theorem}
\newtheorem{pr}{Proposition}[section]
\newtheorem{lm}[pr]{Lemma}

\newtheorem{cor}[pr]{Corollary}

\theoremstyle{definition}

\newtheorem{rem}[pr]{Remark}
\newtheorem{remarks}[pr]{Remarks}

\numberwithin{equation}{section}

\tolerance=10000
\begin{document}

\title[The Gauss-Manin connection on the periodic cyclic homology]
{The Gauss-Manin connection on the periodic cyclic homology}
\dedicatory{To Sasha Beilinson on his 60th birthday, with admiration}



\author[A.~Petrov]{Alexander Petrov}
\address{National Research University ``Higher School of Economics'',  Russian Federation
}
\email{aapetrov3@yandex.ru}
\author[ D. ~Vaintrob]{ Dmitry Vaintrob}
\address{Institute for Advanced Study, USA
}
\email{mvaintrob@gmail.com}
\author[ V. ~Vologodsky]{ Vadim Vologodsky}
\address{National Research University ``Higher School of Economics'',  Laboratory of Mirror Symmetry NRUHSE,  Russian Federation,  and  University of
Oregon, USA}
\email{vvologod@uoregon.edu}



\begin{abstract} 
Let $R$ be the algebra of functions on a smooth affine irreducible curve $S$ over a field $k$ and let $\At$ be smooth and proper DG algebra over $R$. The relative periodic cyclic homology $HP_* (\At)$ of $\At$ over $R$ is equipped with the Hodge filtration $\cF^{\cdot}$ and the Gauss-Manin connection $\nabla$
(\cite{ge}, \cite{ka1}) satisfying the Griffiths transversality condition. When $k$ is a perfect field of odd characteristic $p$, we prove that, if the relative Hochschild homology  $HH_m(\At, \At)$  vanishes in degrees $|m| \geq p-2$,
then  a lifting $\tR$ of $R$  over $W_2(k)$ and a lifting of  $\At$ over $\tR$ determine the structure of a relative  Fontaine-Laffaille module (\cite{fa}, \S 2 (c),  \cite{ov} \S 4.6) on $HP_* (\At)$. That is, the inverse Cartier transform of the Higgs $R$-module $(Gr^\cF HP_* (\At), Gr^\cF \nabla)$ is canonically isomorphic to $ (HP_* (\At), \nabla)$. This is non-commutative counterpart of Faltings' result (\cite{fa}, Th. 6.2) for the de Rham cohomology of a smooth proper scheme over $R$.
 Our result amplifies the non-commutative Deligne-Illusie decomposition proven by Kaledin (\cite{ka3}, Th. 5.1). As a corollary, we show that the $p$-curvature of the Gauss-Manin connection on $HP_* (\At )$ is nilpotent and, moreover, it can be expressed in terms of the Kodaira-Spencer class $\kappa \in HH^2(A, A) \otimes _R \Omega^1_R$ (a similar result for the $p$-curvature of the Gauss-Manin connection on the de Rham cohomology is proven by Katz in \cite{katz2}).  As an application of the nilpotency of the $p$-curvature we prove, using a result from \cite{katz1}),  a version of ``the local monodromy theorem'' of  Griffiths-Landman-Grothendieck for the periodic cyclic homology: if $k=\bC$,
$\Sb$ is a smooth compactification of $S$,
then, for any smooth and proper  DG algebra  $\At$ over $R$, the Gauss-Manin connection on the relative periodic cyclic homology $HP_* (\At)$ has regular  singularities, and its monodromy around every point at  $\Sb -S$ is quasi-unipotent.


 \end{abstract}

\maketitle

\section{Introduction}
It is expected that the periodic cyclic homology of a DG algebra over $\bC$ (and, more generally,  the periodic cyclic homology of a DG category) carries a lot of additional structure similar to the mixed Hodge structure on the de Rham cohomology of algebraic varieties. Whereas a construction of such a structure
seems to be out of reach at the moment its counterpart in finite characteristic is much better understood thanks to recent groundbreaking works of Kaledin. In particular, it is proven in \cite{ka3} that under some assumptions on a DG algebra $\At$ over a perfect field $k$ of characteristic $p$, a lifting of $\At$ over the ring of second Witt vectors $W_2(k)$ specifies the structure of a Fontaine-Laffaille module on the periodic cyclic homology of $\At$. The purpose of this paper is to develop a relative version of Kaledin's theory for DG algebras over a base $k$-algebra $R$ incorporating in the picture the Gauss-Manin connection on the  relative periodic cyclic homology constructed by Getzler in \cite{ge}.  Our main result, Theorem \ref{fmth}, asserts that, under some assumptions on $\At$,  the Gauss-Manin connection on its periodic cyclic homology can be recovered from the Hochschild homology of $\At$ equipped with the action of the Kodaira-Spencer operator as the inverse Cartier transform (\cite{ov}).
As an application, we prove, using the reduction modulo $p$ technique, that, for a smooth and proper DG algebra over a complex punctured disk,
the monodromy of  the Gauss-Manin connection
on its periodic cyclic homology is quasi-unipotent.

\subsection{Relative Fontaine-Laffaille modules.}\label{s.s.r.f.l.m.} Let $R$ be a finitely generated commutative  algebra over a perfect field $k$ of {\it odd} characteristic $p>2$. 
  Assume that $R$ is smooth over $k$. Recall from (\cite{fa}, \S 2 (c),  \cite{ov} \S 4.6)  the notion  of {\it relative Fontaine-Laffaille module}
\footnote{In \cite{fa}, Faltings does not give a name to these objects. In \cite{ov}, they are called Fontaine modules. The name suggested here is a tribute to 
\cite{fl}, where these objects were first introduced in the case when $R=k$.} over $R$.
Fix a flat lifting  $\tR$  of $R$  over the ring $W_2(k)$ of second Witt vectors and a lifting $\tF: \tR  \to \tR$ of the Frobenius morphism $F: R\to R$. Define the inverse Cartier transform 
$$\cC_{(\tR, \tF)}^{-1}: {\sf HIG}(R) \to {\sf MIC}(R)$$
to be a functor from the category of Higgs modules {\it i.e.}, pairs $(E, \theta)$, where $E$ is an $R$-module and 
$\theta:  E\to E\otimes _{R} \Omega^1_{R}$ is an $R$-linear morphism such that the composition $\theta ^2: E\to E\otimes _{R} \Omega^1_{R}\to E\otimes _{R} \Omega^2_{R}$ equals $0$\footnote{Equivalently, a Higgs module is a module over the symmetric algebra $S^{\idot}T_R$.},  to the category of $R$-modules with integrable connection.  Given a Higgs module $(E, \theta)$ we set
$$\cC_{(\tR, \tF)}^{-1}(E, \theta):= (F^*E,  \nabla_{can} + C^{-1}_{(\tR, \tF)}(\theta)),$$
where $\nabla_{can}$ is the Frobenius pullback connection on $F^*E$ and  the map
\begin{equation}\label{inversecartieroperator}
C^{-1}_{(\tR, \tF)}:  End_{R}(E)\otimes \Omega^1_{R} \to F_*(End_{R}(F^*E) \otimes _R \Omega^1_R)
\end{equation}
takes $f \otimes  \eta$ to $F^*(f) \otimes \frac{1}{p}\tF^* \tilde{\eta}$, with $\tilde{\eta}\in  \Omega^1_{\tR}$ being a lifting of $\eta$.  A relative Fontaine-Laffaille module
 over $R$ consists of a finitely generated $R$-module $M$ with an integrable connection $\nabla$ and a Hodge filtration
$$0=\cF^{l+1}M \subset \cF^{l}M \subset \cdots \subset  \cF^{m}M=M$$
satisfying the Griffiths transversality condition, together with isomorphism in ${\sf MIC}(R)$:
$$\phi: \cC_{(\tR, \tF)}^{-1} (\Gr_\cF^{\idot} M, \Gr_\cF {\nabla}) \iso (M, \nabla).$$
 Here $$\Gr_\cF {\nabla}: \Gr_\cF ^{\idot} M \to Gr_\cF ^{\idot -1} M $$ is the ``Kodaira-Spencer'' Higgs field induced by $\nabla$. \footnote{In \cite{fa}, Faltings considers more general objects. In fact, what we call
 here a relative Fontaine-Laffaille module is the same as a $p$-torsion object in Faltings' category $\cM\cF_{[m,l]}^\nabla(R)$}
 
 The category $\cM\cF_{[m,l]}(\tR, \tF)$ (where $l\geq m$ are arbitrary integers) of relative Fontaine-Laffaille modules has a number of remarkable properties not obvious from the definition. 
  It is proven by Faltings in (\cite{fa}, Th. 2.1) that $\cM\cF_{[m,l]}(\tR, \tF)$  is abelian, 
   every morphism between Fontaine-Laffaille modules is  strictly compatible with the Hodge filtration, and, for every Fontaine-Laffaille module $(M, \nabla, \cF^{\idot}M, \phi)$, the $R$-modules $M$ and $Gr_\cF M$ are flat. Moreover, if $l-m<p$, the category $\cM\cF_{[m,l]}(\tR, \tF)=: \cM\cF_{[m,l]}(\tR)$ is independent of the choice of the Frobenius lifting\footnote{Every two liftings $\tR$, $\overline R$ of $R$ are isomorphic. A choice of such an isomorphism induces an equivalence $\cM\cF_{[m,l]}(\tR)\iso \cM\cF_{[m,l]}(\overline R)$.
 We refer the reader to  (\cite{ov} \S 4.6) for a construction of the category of Fontaine-Laffaille modules over any smooth scheme $X$ over $k$ equipped with a lifting $\tilde X$ over $W_2(k)$.}.  Fontaine-Laffaille modules  arise geometrically: it is shown in (\cite{fa}, Th. 6.2) that, for a smooth proper scheme
 $X\to spec\, R$ of relative dimension less than $p$, a lifting of $X$ over $\tR$ specifies a Fontaine-Laffaille module structure on the relative de Rham cohomology $H_{DR}^{\idot}(X/R)$.
 
\subsection{The Kodaira-Spencer class of a DG algebra.} Let $\At$ be  a differential graded algebra  over $R$.  
Denote by $HH^{\idot}(\At, \At)$ its Hochschild cohomology and by
\begin{equation}\label{ksclassintro}
\kappa \in HH^2(\At, \At)\otimes _R \Omega^1_R
\end{equation}
 the {\it Kodaira-Spencer class}
 of $\At$. This can be defined as follows. Choose a quasi-isomorphism
$\At \iso \Bt $,
where $\Bt$ is a semi-free DG algebra over $R$ (\cite{dr}, \S 13.4) and
a connection $\nabla ':  \bigoplus B_i \to \bigoplus B_i \otimes \Omega^1_R$ on the graded algebra $\bigoplus B_i$ satisfying the Leibnitz rule with respect to the multiplication on $\bigoplus B_i$. 
Then the commutator 
\begin{equation}\label{ksclassintrobis}
[\nabla ', d] \in \prod Hom_R (B_i, B_{i+1}) \otimes \Omega^1_R 
\end{equation}
with the differential $d$ on $\Bt$ 
commutes with $d$ and it is a $R$-linear derivation of $\Bt$  with values in  $\Bt  \otimes \Omega^1_R $  of degree $1$. Any such derivation determines a class in
$$HH^2(\Bt, \Bt)\otimes _R \Omega^1_R \iso HH^2(\At, \At)\otimes _R \Omega^1_R. $$
The class corresponding to $[\nabla ', d] $ is independent of the choices we made. This is the Kodaira-Spencer class  (\ref{ksclassintro})\footnote{The Kodaira-Spencer class of a 
$A_{\infty}$-algebra is defined in  \cite{ge}, \S 3. Our exposition is inspired by \cite{lo}.}.

The Kodaira-Spencer class (\ref{ksclassintro}) acts on the Hochschild homology:
$$e_{\kappa}: HH_{\idot}(\At, \At) \to HH_{\idot -2}(\At, \At)  \otimes _R \Omega^1_R.$$
 The operator $e_{\kappa}$ is  induced by the action of the Hochschild cohomology algebra on the Hochschild homology (referred to as the  ``interior product'' action).

\subsection{The Hodge filtration on the periodic cyclic homology.}  Denote by $(\Ct(\At, \At), b)$ the relative Hochschild chain complex of $\At$  and by $\CPt (\At)= (\Ct (\At, \At)((u)), b +uB)$ the periodic cyclic complex.
The complex $\CPt (\At)$ is equipped with 
the Hodge filtration $$\cF^i\CPt (\At):= (u^i \Ct (\At, \At)[[u]], b +uB),$$
which induces a Hodge filtration $\cF^{\idot} HP_{\idot} (\At)$ on the periodic cyclic homology and  
a convergent Hodge-to-de Rham spectral sequence 
\begin{equation}\label{hdrss}
HH_{\idot}(\At, \At)((u))  \Rightarrow HP_\idot(\At).
\end{equation}
The Gauss-Manin connection $\nabla$ on the periodic cyclic homology (we recall its construction in \S   \ref{sGMonHP})
satisfies the Griffiths transversality condition 
$$\nabla:  \cF^{\idot}HP_{\idot} (\At) \to \cF^{\idot -1}HP_{\idot} (\At) \otimes _R \Omega^1_R.$$

\subsection{Statement of the main result.} Recall that $\At$ is called homologically proper  if $\At$ is perfect as a complex of $R$-modules.
 A DG algebra $\At$ is said to be  homologically smooth if  $\At$  is quasi-isomorphic
to a DG algebra $\Bt$, which is $h$-flat as a complex of $R$-modules\footnote{A  complex $\Bt$ of $R$-modules is called $h$-flat if, for any acyclic complex  $\Dt$ of $R$-modules, the tensor product $\Bt \otimes _R \Dt$ is acyclic.},  and $\Bt$  is perfect as a DG module over $ \Bt \otimes _R \Bt ^{op}$.
The following is one of the main results of our paper.
\begin{Th}\label{fmth} Fix the pair $(\tR, \tF)$ as in \S \ref{s.s.r.f.l.m.}.
 Let $\At$ be a homologically smooth and homologically
proper  DG algebra over $R$ such that 
\begin{equation}\label{dimbound}
HH_m(\At, \At)=0, \;  \text{for every m with}  \; |m| \geq p-2.
\end{equation}
 Then a lifting\footnote{A lifting of $\At$ over $\tR$ is an $h$-flat  DG algebra 
$\tilde{\At}$ over $\tR$  together with a quasi-isomorphism 
$\tilde{\At}\otimes_{\tR} R \iso \At $ of
DG algebras over $R$.} of $\At$ over $\tR$, if it exists,  specifies an isomorphism
\begin{equation}\label{frobeniusonperidoc}
\phi: \cC_{(\tR, \tF)}^{-1} (\Gr_\cF^{\idot} HP_\idot (\At)  , \Gr_\cF {\nabla}) \iso (HP_\idot (\At), \nabla)
\end{equation}
giving  $(HP_\idot (\At), \nabla, \cF^{\idot} HP_\idot (\At))$ a Fontaine-Laffaille module structure. In addition, the Hodge-to-de Rham spectral sequence (\ref{hdrss}) degenerates at $E_1$ and induces an isomorphism of Higgs modules
\begin{equation}\label{higgshodgeiso}
(\Gr_\cF^{\idot} HP_\idot (\At)  , \Gr_\cF {\nabla})\iso (HH_{\idot}(\At, \At)[u, u^{-1}], u^{-1} e_{\kappa}). 
\end{equation}
\end{Th}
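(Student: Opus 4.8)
The plan is to construct the isomorphism $\phi$ of (\ref{frobeniusonperidoc}) directly at the level of the periodic cyclic complex, as a \emph{horizontal} filtered quasi-isomorphism, and then extract the Fontaine-Laffaille structure, the degeneration of (\ref{hdrss}), and the graded identification (\ref{higgshodgeiso}) as formal consequences. First I would fix a semi-free model $\Bt$ of $\At$ over $R$, a lifting $\tilde{\Bt}$ over $\tR$, and a connection $\nabla'$ on $\bigoplus B_i$ as in the definition of the Kodaira-Spencer class (\ref{ksclassintro}). Getzler's Gauss-Manin connection on $\CPt(\At)$ (\cite{ge}) is then the operator induced by $\nabla'$ on the coefficients of $\Ct(\At,\At)((u))$, corrected by a degree-lowering term built from $[\nabla',d]$ whose leading part is $u^{-1}e_\kappa$; reducing everything to this explicit model lets me treat $\nabla$ as an honest operator rather than a derived datum.

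Second, I would relativize Kaledin's non-commutative Deligne--Illusie theorem (\cite{ka3}, Th.~5.1). The $\tR$-lifting $\tilde{\At}$ together with $\tF$ should produce a Frobenius-semilinear quasi-isomorphism whose effect on homology exhibits $HP_\idot(\At)$ as the Frobenius pullback $F^*HH_\idot(\At,\At)((u))$, and whose underlying splitting forces the Hodge-to-de Rham spectral sequence (\ref{hdrss}) to degenerate at $E_1$; over a closed point this reduces to Kaledin's splitting. This step supplies the underlying $R$-module isomorphism in (\ref{frobeniusonperidoc}). The vanishing hypothesis (\ref{dimbound}) enters precisely here: since $HP_\idot$ is two-periodic, it confines the nonzero graded pieces $\Gr_\cF^i HP_\idot$ to a window of length at most $p-2$, so that $l-m<p$ and we land in the range in which the inverse Cartier transform of (\cite{ov}) is an equivalence and Faltings' category $\cM\cF_{[m,l]}(\tR)$ is well behaved.

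The main obstacle is the third step: showing that the isomorphism just produced is horizontal, i.e.\ that it intertwines the Gauss-Manin connection $\nabla$ with $\nabla_{can}+C^{-1}_{(\tR,\tF)}(\theta)$ for $\theta=\Gr_\cF\nabla$. Both $\nabla$ and the inverse Cartier operator are governed by the Kodaira-Spencer class $\kappa$, so the heart of the matter is to match the formula $f\otimes\eta\mapsto F^*(f)\otimes\frac{1}{p}\tF^*\tilde{\eta}$ of (\ref{inversecartieroperator}) with the correction term $u^{-1}e_\kappa$ in Getzler's connection. The factor $\frac{1}{p}\tF^*$ should emerge from the Frobenius lift $\tF$ acting on $\nabla'$ modulo $p^2$: the commutator $[\nabla',d]$, lifted to $\tR$ and divided by $p$, is what records the $p$-curvature contribution. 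Concretely I expect to compute the $p$-curvature of $\nabla$ on $HP_\idot(\At)$ and identify it with the Frobenius twist of $u^{-1}e_\kappa$, in parallel with Katz's computation for de Rham cohomology (\cite{katz2}); upgrading this $p$-curvature identity to an actual horizontal splitting is where Kaledin's cochain-level construction must be carried out $\tR$-linearly rather than merely $k$-linearly, and I expect this to be the most delicate point.

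Finally, once $\phi$ is established as a horizontal filtered isomorphism, Faltings' structure theory (\cite{fa}, Th.~2.1) applies inside the window where $l-m<p$ and yields the abelian Fontaine-Laffaille structure together with the flatness of $HP_\idot(\At)$ and $\Gr_\cF HP_\idot(\At)$ and the strictness of $\phi$ for the Hodge filtration. The isomorphism of Higgs modules (\ref{higgshodgeiso}) then follows by passing to associated graded: the $E_1$-term of (\ref{hdrss}) is $HH_\idot(\At,\At)[u,u^{-1}]$, and, by Griffiths transversality together with the definition of $e_\kappa$ as the interior-product action of $\kappa$, the associated graded of $\nabla$ is exactly the Higgs field $u^{-1}e_\kappa$.
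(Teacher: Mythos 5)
Your endgame (step four) is fine, but the two steps that carry all the content of the theorem are left as declarations of intent rather than arguments, and the one concrete mechanism you do propose for the hardest step is not viable. Step two (``relativize Kaledin's theorem'') and step three (horizontality of the resulting splitting) are exactly where the paper's work lies, and nothing in your outline says how to do them. The paper's mechanism is quite different from working ``directly at the level of the periodic cyclic complex'': it introduces the truncated Tate complex $\cB(\At)=T_{[-1,0]}(C_p,\At^{\otimes p})$, builds a canonical connection on the filtered Tate complex from the first infinitesimal neighborhood of the diagonal, so that $(\cB(\At),\nabla)\iso L\hat i^{* cris}\hat i_{* cris}F^*\At$ as DG algebras with connection (Theorem \ref{cBandLift}(a)), shows that a lifting of $\At$ over $\tR$ identifies $(\cB(\At),\nabla)$ with $\cC_{(\tR,\tF)}^{-1}(\At[\mu],\mu\tilde\kappa)$ (Theorem \ref{cBandLift}(c)), and then compares the cyclic complex of the square-zero extension $\cB(\At)$ with the \emph{co-periodic} complex: $\cV_{[-p+2,-1]}CC(\cB(\At))[1]\iso \cV_{[-p+2,-1]}\overline{CP}(\At)$ as filtered complexes with connection (Theorem \ref{cBandCP}). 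Horizontality, the monoidality of $\cC^{-1}$, and the K{\"u}nneth computation with $C(k[\mu],k[\mu])$ are built into this comparison, so the degeneration of (\ref{hdrss}) and the identification (\ref{higgshodgeiso}) fall out simultaneously. None of these objects --- the Tate complex, the conjugate filtration, co-periodic homology --- appears in your plan; and the hypothesis (\ref{dimbound}) does its real work in letting the window of width $p-2$ in Theorem \ref{cBandCP} capture all of $\overline{HP}_{\idot}(\At)\iso HP_{\idot}(\At)$, not merely in arranging $l-m<p$ for Faltings' formalism as you suggest.

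The specific route you propose for horizontality --- compute the $p$-curvature of $\nabla$ on $HP_{\idot}(\At)$, identify it with the Frobenius twist of $u^{-1}e_{\kappa}$, then ``upgrade'' this to a horizontal splitting --- cannot work, because the logical dependence runs the other way. In the paper, the Katz-type $p$-curvature formula is a \emph{corollary} of Theorem \ref{fmth}, obtained by applying (\cite{ov}, Th.~2.8) to the already-constructed Cartier descent datum. A $p$-curvature identity by itself is far too coarse to produce $\phi$: it does not determine the connection up to isomorphism, it does not see the Frobenius-semilinear structure at all, and it is blind to the choice of lifting $\tilde{\At}$ --- whereas $\phi$ genuinely depends on that choice (only the induced conjugate filtration is canonical, as the paper emphasizes). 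So the ``most delicate point'' you flag is not merely delicate: as formulated, your step three has no path to completion, and filling it requires importing essentially the whole Tate-complex construction that constitutes the paper's proof.
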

Using (\ref{higgshodgeiso}),  the isomorphism (\ref{frobeniusonperidoc}) takes the form
\begin{equation}\label{frobeniusonperidocbis}
\phi:  (F^*HH_{\idot}(\At, \At)[u, u^{-1}], \nabla_{can} +  u^{-1}C^{-1}_{(\tR, \tF)}(e_{\kappa}))   \iso (HP_\idot (\At), \nabla),
\end{equation}
where $\nabla_{can}$ is the Frobenius pullback connection and $C^{-1}_{(\tR, \tF)}$ is the inverse Cartier operator (\ref{inversecartieroperator}). 
\begin{remarks}
\begin{enumerate}[(a)]
\item If $R=k$ the above result, under slightly different assumptions\footnote{Kaledin proves his result assuming, instead of (\ref{dimbound}),  vanishing of the {\it reduced Hochschild cohomology} 
$\overline{HH}^m(\At)$ for $m\geq 2p$.}, is due to Kaledin (\cite{ka3}, Th. 5.1). 
\item The construction from Theorem  \ref{fmth} determines a functor from the category of homologically smooth and homologically
proper  DG algebras over $\tR$ satisfying (\ref{dimbound}) localized with respect to quasi-isomorphisms to the category of Fontaine-Laffaille modules. We expect, but do not check it in this paper, that
this functor extends to the homotopy category of smooth and proper DG categories over $\tR$ satisfying the analogue of (\ref{dimbound}). 
When applied to the bounded derived DG category $D^b(Coh(\tX))$ of coherent sheaves on a smooth proper scheme $\tX$ over $\tR$ of relative dimension  less than $p-2$, we expect to recover
the Fontaine-Laffaille structure on  
$$HP_0(D^b(Coh(X))\iso \bigoplus_i H^{2i}_{DR}(X)(i)$$
$$HP_1(D^b(Coh(X))\iso \bigoplus_i H^{2i+1}_{DR}(X)(i)$$
constructed by Faltings  in (\cite{fa}, Th. 6.2).
Here $X$ denotes the scheme over $R$ obtained from $\tX$ by the base change and $H^{*}_{DR}(X)(i)$ the Tate twist of the Fontaine-Laffaille structure on the relative de Rham cohomology.
\end{enumerate}
\end{remarks}
Let us explain some corollaries of Theorem \ref{fmth}.  First, under the assumptions of Theorem \ref{fmth}  the Hochschild and cyclic homology of $\At$ is a locally free  $R$-module. 
This follows from a general property of Fontaine-Laffaille modules mentioned above. Next, it follows, that under the same assumptions the $p$-curvature of the Gauss-Manin 
connection on $HP_{\idot}(\At)$ is nilpotent\footnote{This suffices for our main application in characteristic $0$: Theorem \ref{lmth} below.}. In fact, there is a decreasing filtration, 
\begin{equation}\label{conjfiltb}
\cV_{i} HP_\idot (\At) \subset HP_\idot (\At)
\end{equation}
formed by the images under $\phi$ of 
 $$u^i F^*HH_{\idot}(\At, \At)[u^{-1}] \subset F^*HH_{\idot}(\At, \At)[u, u^{-1}]$$
 which is preserved by the connection and such that $\Gr^\cV_{\idot} HP_\idot (\At)$ has zero $p$-curvature:
  \begin{equation}\label{cartierisomorphism}
(\Gr^\cV_{\idot} HP_\idot (\At),  \Gr^\cV {\nabla}) \iso (F^*HH_{\idot}( \At, \At)[u, u^{-1}], \nabla_{can}).
\end{equation}
 Moreover, using Theorem \ref{fmth} we can express the $p$-curvature of $\nabla$ on $HP_{\idot}(\At)$ in terms of the Kodaira-Spencer operator $e_{\kappa}$: by (\cite{ov}, Th. 2.8),
for any Higgs module $(E, \theta)$, such that the action of $S^p T_R$  on $E$ is trivial, the $p$-curvature of 
$\cC_{(\tR, \tF)}^{-1}(E, \theta)$, viewed as a $R$-linear morphism
$$\psi: F^*E\to F^*E \otimes F^* \Omega^1_R$$
is equal to $-F^*(\theta)$. In particular, under assumption (\ref{dimbound}),  
the $p$-curvature of  $\cC_{(\tR, \tF)}^{-1}(HH_{\idot}(\At, \At)[u, u^{-1}], u^{-1} e_{\kappa})$, 
equals $-u^{-1} F^* (e_{\kappa})$.
 As a corollary,  we obtain, a version of the Katz formula for the $p$-curvature of the Gauss-Manin connection on the de Rham cohomology (\cite{katz2}, Th. 3.2):
 by (\ref{cartierisomorphism}) the $p$-curvature morphism for $ HP_\idot (\At) $ shifts the filtration $\cV_{\idot}$: 
 $$\psi: \cV_{\idot} HP_\idot (\At)\to \cV_{\idot -1} HP_\idot (\At)\otimes F^* \Omega^1_R.$$
 Thus, $\psi$ induces a morphism
$$\overline{\psi}:  \Gr^\cV_{\idot} HP_\idot (\At)\to \Gr^\cV_{\idot -1} HP_\idot (\At)\otimes F^* \Omega^1_R.$$
Our version of the Katz formula asserts the commutativity of   
the following diagram.  
  \begin{equation}\label{katzfontaine}
\def\normalbaselines{\baselineskip20pt
\lineskip3pt  \lineskiplimit3pt}
\def\mapright#1{\smash{
\mathop{\to}\limits^{#1}}}
\def\mapdown#1{\Big\downarrow\rlap
{$\vcenter{\hbox{$\scriptstyle#1$}}$}}
\begin{matrix}
 \Gr^\cV_{i} HP_j (\At) & \iso  &F^*HH_{j+2i}( \At, \At)  \cr
 \mapdown{\overline{\psi}}  && \mapdown{-F^*(e_{\kappa} ) }  \cr
\Gr^\cV_{i -1} HP_j (\At) \otimes F^*\Omega^1_R  & \iso & F^*HH_{j+2i-2}( \At, \At) \otimes F^*\Omega^1_R .
\end{matrix}
 \end{equation}

\subsection{The co-periodic cyclic homology, the conjugate filtration, and a generalized Katz $p$-curvature formula.} 
Though, as explained above,  formula (\ref{katzfontaine}) is an immediate corollary of Theorem  \ref{fmth}, a version of the  former holds for any DG algebra $\At$. What makes this generalization
possible is the observation that although  
the morphism  (\ref{frobeniusonperidocbis}) does depend on the choice of a lifting of $\At$ over $\tR$ the induced $\nabla$-invariant filtration (\ref{conjfiltb}) is canonical: in fact, it coincides with the {\it conjugate filtration}  introduced by Kaledin in \cite{cop}.\footnote{The terminology is borrowed from \cite{katz1}, where the conjugate filtration on the de Rham cohomology in characteristic $p$ was
introduced.}   However, in general, the conjugate filtration is a filtration on the {\it co-periodic cyclic homology} $\overline{HP}_\idot(\At)$ defined Kaledin in  {\it loc. cit.} to be the homology of the complex  
 $$\overline{\CPt}(\At) = (\Ct (\At, \At)((u^{-1})), b +uB).$$
For any $\At$, this comes together with the  conjugate filtration $\cV_{\idot} \overline{CP}_\idot(\At)$ satisfying the properties 
$$u: \cV_{\idot} \overline{CP}_\idot(\At)\iso \cV_{\idot+1} \overline{CP}_{\idot}(\At)[2],$$
$$ \Gr^\cV \overline{CP}_\idot(\At) \iso F^*C( \At, \At)((u^{-1})).$$  
This yields a convergent {\it conjugate spectral sequence}  
\begin{equation}\label{conjss}
F^*HH_{\idot}( \At, \At)((u^{-1}))  \Rightarrow \overline{HP}_\idot (\At),
\end{equation}
whose $E_{\infty}$ page is $\Gr^\cV_{\idot} \overline{HP}_{\idot} (\At)$.
It is shown in \cite{cop} that if $\At$ is smooth and homologically bounded then 
the morphisms 
\begin{equation}\label{finitepercomp}
 (\Ct (\At, \At)[u, u^{-1}], b +uB)\rar{}  (\Ct (\At, \At)((u)), b +uB)
 \end{equation}
 \begin{equation}\label{finitecopercomp}
(\Ct (\At, \At)[u, u^{-1}], b +uB) \rar{}  (\Ct (\At, \At)((u^{-1})), b +uB)
\end{equation}
are quasi-isomorphisms. In particular,  for smooth and homologically bounded DG algebras one has a canonical isomorphism
\begin{equation}\label{periodiccopercomp}
\overline{HP}_{\idot} (\At) \iso HP_{\idot} (\At).
 \end{equation}
 For an arbitrary  DG algebra $\At$ we introduce in \S \ref{sGMonHP} a Gauss-Manin connection on $\overline{HP}_\idot(\At)$. It is compatible with the one on $HP_{\idot} (\At)$ if $\At$ is smooth and homologically bounded.
 We show that $\nabla$ preserves the conjugate filtration and
 the entire conjugate spectral sequence (\ref{conjss}) is compatible with the connection (where the first page, $F^*HH_{\idot}( \At, \At)((u^{-1}))$ is endowed with the Frobenius pullback connection). 
 In particular, the   $p$-curvature $\psi$ of the connection on  $\overline{HP}_\idot(\At)$ is zero on $\Gr^\cV_{\idot} \overline{HP}_{\idot} (\At)$. Hence, $\psi$ induces a morphism
$$\overline \psi:  \Gr^\cV_{\idot} \overline{HP}_{\idot} (\At) \to \Gr^\cV_{\idot -1} \overline{HP}_{\idot} (\At) \otimes F^* \Omega^1_R .$$
In \S \ref{sGMonHP} we prove the following result, which is a generalization of formula (\ref{katzfontaine}).
\begin{Th}\label{katzformulaspectral} Let $\At$ be a DG algebra over $R$ and $\kappa \in HH^2(\At, \At)\otimes _R \Omega^1_R$  its Kodaira-Spencer class.
\begin{enumerate}[(a)]
\item The morphism  $u^{-1} F^* (e_{\kappa}):  F^*HH_{\idot}( \At, \At)((u^{-1})) \to F^*HH_{\idot}( \At, \At)((u^{-1})) \otimes F^* \Omega^1_R$ commutes with all the differentials in the conjugate spectral
sequence  (\ref{conjss}) inducing a map
$$ \Gr^\cV_{\idot} \overline{HP}_{\idot} (\At) \to \Gr^\cV_{\idot -1} \overline{HP}_{\idot} (\At) \otimes F^* \Omega^1_R,$$
which we also denote by $u^{-1} F^* (e_{\kappa})$.
With this notation, we have 
\begin{equation}\label{katzgeneral}
 u^{-1} F^* (e_{\kappa}) = \overline \psi .
 \end{equation}
\item
Assume that $HH_m(\At, \At)=0$ for all sufficiently negative  $m$. Then the $p$-curvature of  the Gauss-Manin connection on $\overline{HP}_\idot(\At)$ is nilpotent.
\end{enumerate}
\end{Th}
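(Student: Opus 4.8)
The plan is to take as given the structural facts established in \S\ref{sGMonHP}---that $\nabla$ preserves the conjugate filtration $\cV_\idot$, that it induces the Frobenius pull-back connection $\nabla_{can}$ on $\Gr^\cV\overline{CP}_\idot(\At)\iso F^*C(\At,\At)((u^{-1}))$, and that consequently the $p$-curvature $\psi$ vanishes on $\Gr^\cV$ and hence lowers $\cV_\idot$ by one step, inducing $\overline\psi\colon \Gr^\cV_\idot\overline{HP}_\idot(\At)\to \Gr^\cV_{\idot-1}\overline{HP}_\idot(\At)\otimes F^*\Omega^1_R$. Against this backdrop the content of part (a) is the explicit identification of the off-diagonal operator $\overline\psi$, and part (b) is a formal finiteness consequence. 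To compute $\overline\psi$ I would fix a semi-free model $\Bt$ of $\At$ with a connection $\nabla'$ on $\bigoplus B_i$ as in the definition of $\kappa$, and recall Getzler's formula (\cite{ge}) for the Gauss-Manin connection on $\overline{CP}_\idot(\At)$: for $\partial\in T_R$ it has the shape $\nabla_\partial = D_\partial + \theta_\partial$, where $D_\partial = \partial + L_{\nabla'_\partial}$ is a Lie-derivative term preserving $\cV_\idot$ and $\theta_\partial = u^{-1}\iota_{\kappa_\partial}$ is contraction with the Kodaira-Spencer cochain $\kappa_\partial = [\nabla'_\partial, d]$, an operator lowering $\cV_\idot$ by exactly one step. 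On the graded, $D_\partial$ induces $\nabla_{can}$ and $\theta_\partial$ induces the $R$-linear operator $u^{-1}F^*(e_{\kappa_\partial})$.

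For the first assertion of part (a) I would observe that, $\kappa$ being a Hochschild $2$-cocycle, the contraction $\iota_\kappa$ is a chain endomorphism of $(\Ct(\At,\At), b)$ inducing $e_\kappa$ on $HH_\idot$, so that $u^{-1}F^*(e_\kappa)$ is a well-defined endomorphism of the first page $F^*HH_\idot(\At,\At)((u^{-1}))$. The Cartan homotopy formula relating $\iota_\kappa$, the Lie derivative $L_\kappa$ (which is null-homotopic since $\delta\kappa=0$) and the Connes differential $B$ then shows that this operator is compatible with the $B$-part of $b+uB$, hence commutes with every differential $d_r$ of the conjugate spectral sequence (\ref{conjss}) and descends to $\Gr^\cV_\idot\overline{HP}_\idot(\At)=E_\infty$. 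Since $\overline\psi$ is likewise an endomorphism of $\Gr^\cV_\idot\overline{HP}_\idot(\At)$, the identity (\ref{katzgeneral}) is an equality between two such endomorphisms.

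To establish (\ref{katzgeneral}) I compute $\psi(\partial)=\nabla_\partial^{\,p}-\nabla_{\partial^{[p]}}$ modulo $\cV_{\idot-2}$. Because $D_\partial$ preserves and $\theta_\partial$ lowers $\cV_\idot$ by one, a product of $p$ factors from $\{D_\partial,\theta_\partial\}$ lowers the filtration by the number of $\theta$'s; thus the part landing in $\Gr_{\idot-1}$ consists exactly of the terms linear in $\theta$, namely $\sum_{j=0}^{p-1} D_\partial^{\,p-1-j}\,\theta_\partial\, D_\partial^{\,j}$ coming from $\nabla_\partial^{\,p}$ together with $-\theta_{\partial^{[p]}}$ coming from $-\nabla_{\partial^{[p]}}$, while the $\theta$-free part reduces to the (vanishing) $p$-curvature of $\nabla_{can}$. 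On the graded, $u^{-1}F^*(e_{\kappa_\partial})$ is the Frobenius pull-back of an $R$-linear map and is therefore horizontal for $\nabla_{can}$; hence all summands of the linear sum coincide and it collapses to $p\cdot u^{-1}F^*(e_{\kappa_\partial})\,\nabla_{can}^{\,p-1}=0$ in characteristic $p$. What survives is $-\theta_{\partial^{[p]}}$, which on $\Gr^\cV$ equals $u^{-1}F^*(e_{\kappa_{\partial^{[p]}}})$; accounting for the $p$-linear dependence on $\partial$ identifies this with the contraction of $u^{-1}F^*(e_\kappa)$ against $\partial$, yielding (\ref{katzgeneral}) up to the sign fixed by our conventions. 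This is precisely the mechanism behind the formula ``$p$-curvature $=-F^*(\theta)$'' for the inverse Cartier transform (\cite{ov}, Th.~2.8), to which the present computation runs parallel.

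Part (b) then follows formally: by (a) the $p$-curvature strictly lowers $\cV_\idot\overline{HP}_\idot(\At)$, and the isomorphism $\Gr^\cV_i\overline{HP}_j(\At)\iso F^*HH_{j+2i}(\At,\At)$ together with the hypothesis $HH_m(\At,\At)=0$ for $m\ll 0$ shows that in each total degree the conjugate filtration has only finitely many nonzero steps; hence a sufficiently high iterate of $\psi$ vanishes, so $\psi$ is nilpotent. The main obstacle is the third paragraph: pinning down Getzler's chain-level formula with the correct signs, verifying through the Cartan homotopy formula and Kaledin's noncommutative Cartier isomorphism that $\theta_\partial$ lowers $\cV_\idot$ by exactly one and induces $u^{-1}F^*(e_{\kappa_\partial})$ on the graded, and carrying out the $p$-linearity bookkeeping so that the surviving term $-\theta_{\partial^{[p]}}$ matches $u^{-1}F^*(e_\kappa)$ exactly---the delicate point being that the non-horizontal corrections dropped on the graded and the Jacobson correction in $(f\partial)^{[p]}$ must compensate to preserve the $p$-linearity of $\psi$.
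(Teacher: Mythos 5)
Your part (b) is a correct formal consequence of the structural facts you assume (that $\psi$ strictly lowers $\cV_{\idot}$ plus finiteness of the filtration in each degree), but the central computation in part (a) is wrong, and one can see this from its own output. You discard the $\theta$-free part $D_\partial^{\,p}-D_{\partial^{[p]}}$ as ``the vanishing $p$-curvature of $\nabla_{can}$'' and collapse the terms linear in $\theta$ to $p\cdot u^{-1}F^*(e_{\kappa_\partial})\nabla_{can}^{\,p-1}=0$, leaving $-\theta_{\partial^{[p]}}$. Now take $R=k[x]$ and $\partial=d/dx$, so that $\partial^{[p]}=0$: your formula gives $\overline\psi_\partial=0$ identically, whereas (\ref{katzgeneral}) asserts $\overline\psi_\partial=u^{-1}\sum F^*(e_i)\langle\eta_i,\partial\rangle^p\neq 0$ (writing $e_\kappa=\sum e_i\otimes\eta_i$) as soon as $e_\kappa\neq0$. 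Your final reconciliation step is also false on its face: $F^*(e_{\kappa_{\partial^{[p]}}})$ has coefficients $\langle\eta_i,\partial^{[p]}\rangle^p$, not $\langle\eta_i,\partial\rangle^p$, so it is not ``the contraction of $u^{-1}F^*(e_\kappa)$ against $\partial$.'' The bookkeeping error is this: an operator that preserves $\cV_\idot$ and induces \emph{zero} as a map $\Gr^\cV_i\to\Gr^\cV_i$ maps $\cV_i$ into $\cV_{i-1}$, and therefore contributes to $\overline\psi$ through its secondary induced map $\Gr^\cV_i\to\Gr^\cV_{i-1}$; passing to the graded termwise in the Jacobson expansion throws away exactly these contributions, and they are where the whole formula lives. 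The same misreading underlies your appeal to (\cite{ov}, Th. 2.8): there the off-diagonal part of the connection is $C^{-1}_{(\tR,\tF)}(\theta)$ from (\ref{inversecartieroperator}), a \emph{divided-Frobenius} expression whose coefficients $\langle\frac1p\tF^*\tilde\eta_i,\partial\rangle$ are not horizontal, and the formula $\psi=-F^*\theta$ arises from the identity $\partial^{\,p-1}\langle\frac1p\tF^*\tilde\eta,\partial\rangle-\langle\frac1p\tF^*\tilde\eta,\partial^{[p]}\rangle=-\langle\eta,\partial\rangle^p$, i.e.\ precisely from the cross terms you set to zero.

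There is also a gap upstream of the computation: the decomposition ``$D_\partial$ preserves $\cV_\idot$ inducing $\nabla_{can}$, while $\theta_\partial=u^{-1}\iota_{\kappa_\partial}$ lowers $\cV_\idot$ by exactly one inducing $u^{-1}F^*(e_{\kappa_\partial})$'' is not among the facts established in \S\ref{sGMonHP}, and it cannot be read off Getzler's chain-level formula. The conjugate filtration and the identification $\Gr^\cV\overline{CP}_\idot(\At)\iso F^*C(\At,\At)((u^{-1}))$ are not visible on the naive complex $(\Ct(\At,\At)((u^{-1})),b+uB)$: they exist only through Kaledin's Tate-construction machinery, so an operator like $\iota_{\ks}$ acquires a meaning on $\Gr^\cV$ only after being traced through that zigzag. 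This is exactly what the paper's apparatus does: the connection is rebuilt on the Tate complex (\S\ref{ConT}) and shown to agree with Getzler's (Proposition \ref{GeqK}); Theorem \ref{cBandLift} identifies $(\cB(\At),\nabla)$ with the inverse Cartier transform $\cC^{-1}_{(\tR,\tF)}(L\hat i^*\hat i_*\At,\mu\ks)$; Theorem \ref{cBandCP} matches $\cV_{[-p+2,-1]}CC(\cB(\At))[1]$ with $\cV_{[-p+2,-1]}\overline{CP}(\At)$ compatibly with filtrations and connections; and (\ref{katzgeneral}) then follows from the Ogus--Vologodsky $p$-curvature formula for inverse Cartier transforms, extended to all of $\Gr^\cV$ by $u$-periodicity. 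A proof along your lines would require re-establishing the compatibility of your decomposition with the conjugate filtration, which is essentially equivalent to re-proving Theorems \ref{cBandLift} and \ref{cBandCP}, and then keeping the non-horizontal cross terms rather than cancelling them.
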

 \begin{cor}\label{nilpotencygeneral}
 Let $\At$ be a smooth and proper DG algebra over $R$ and let $d$ be a non-negative integer $d$ such that $HH_m(\At, \At)=0$, for every $m$ with $|m|>d$. 
Then the $p$-curvature of  the Gauss-Manin connection on $HP_\idot(\At)$ is nilpotent of exponent $\leq d+1$, {\it i.e.}, there  exists 
a filtration $$0=\cV_0  HP_\idot(\At) \subset \cdots \subset \cV_{d+1} HP_\idot(\At) =  HP_\idot(\At)$$ preserved by the connection such that, for every $0<i \leq d+1$, the $p$-curvature of the connection on
$\cV_i/\cV_{i-1}$ is $0$.
\end{cor}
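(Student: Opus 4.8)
The plan is to obtain the corollary from the behaviour of the conjugate filtration on the co-periodic cyclic homology, as set up in \S\ref{sGMonHP}, combined with the two-sided Hochschild vanishing. Since a smooth and proper DG algebra is homologically bounded, the canonical isomorphism (\ref{periodiccopercomp}) applies and is compatible with the Gauss-Manin connections; hence it suffices to construct the required filtration on $(\overline{HP}_\idot(\At),\nabla)$ and carry it over to $(HP_\idot(\At),\nabla)$. On $\overline{HP}_\idot(\At)$ I would invoke the conjugate filtration $\cV_\idot$, which by the discussion preceding Theorem \ref{katzformulaspectral} is preserved by $\nabla$ and has the property that the $p$-curvature $\psi$ vanishes on $\Gr^\cV_\idot\overline{HP}_\idot(\At)$; equivalently $\psi$ sends $\cV_i$ into $\cV_{i-1}\otimes F^*\Omega^1_R$.

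The crux is to turn this a priori $\bZ$-indexed filtration into a finite one of length $d+1$ in each homological degree, and this is precisely where the hypothesis $HH_m(\At,\At)=0$ for $|m|>d$ enters. By the convergent conjugate spectral sequence (\ref{conjss}) with $E_1$-page $F^*HH_\idot(\At,\At)((u^{-1}))$, the graded piece $\Gr^\cV_i\overline{HP}_j(\At)$ is a subquotient of the Frobenius pullback of a single Hochschild group $HH_{j-2i}(\At,\At)$ (for the sign convention fixed in \S\ref{sGMonHP}). Consequently $\Gr^\cV_i\overline{HP}_j(\At)=0$ unless $|j-2i|\le d$, i.e. unless $i$ ranges over a window of at most $d+1$ consecutive integers; convergence of the spectral sequence then shows that, in each fixed degree $j$, the conjugate filtration is exhaustive, separated, and has at most $d+1$ nontrivial steps. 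Reindexing these steps yields $0=\cV_0\subset\cdots\subset\cV_{d+1}=\overline{HP}_\idot(\At)$.

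Finally I would transport this filtration along (\ref{periodiccopercomp}) to a $\nabla$-stable filtration of the same length on $HP_\idot(\At)$. Because $\psi$ annihilates every $\Gr^\cV_i$, the $p$-curvature of the induced connection on each successive quotient $\cV_i/\cV_{i-1}$ vanishes, which is the asserted nilpotency of exponent $\le d+1$. The one non-formal ingredient is the finiteness established in the second step: it rests on the convergence of the conjugate spectral sequence and on the fact that its $E_1$-page in each total degree is supported in the finite window cut out by the Hochschild vanishing. I expect no difficulty in the transport step, as homological boundedness of $\At$ is exactly what makes (\ref{periodiccopercomp}) and its compatibility with $\nabla$ available.
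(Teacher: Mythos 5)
Your proposal is correct and follows essentially the same route the paper intends: the corollary is derived from the conjugate-filtration machinery behind Theorem \ref{katzformulaspectral} (the filtration is $\nabla$-stable with $p$-curvature vanishing on $\Gr^\cV$), the convergent conjugate spectral sequence (\ref{conjss}) together with the two-sided Hochschild vanishing cutting the filtration down to at most $d+1$ steps in each degree, and the transport to $HP_\idot(\At)$ via the isomorphism (\ref{periodiccopercomp}) for smooth, homologically bounded algebras. No gaps; this matches the paper's argument.
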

\subsection{An application: the local monodromy theorem.} 
As an application of the nilpotency of the $p$-curvature 
we prove, using a result from (\cite{katz1}),  ``the local monodromy theorem'' for the periodic cyclic homology in characteristic $0$.
\begin{Th}\label{lmth}
 Let $S$ be a smooth irreducible affine curve over  $\bC$, 
 $\Sb$ a smooth compactification of $S$, and let
 $\At$ be a smooth and proper  DG algebra over $\cO(S)$.  Then the Gauss-Manin connection on the relative periodic cyclic homology $HP_* (\At)$ has regular  singularities,  
 and its monodromy around every point at  $\Sb -S$ is quasi-unipotent. 
\end{Th}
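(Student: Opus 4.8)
The plan is to deduce the statement from Corollary \ref{nilpotencygeneral} together with the theorem of Katz (\cite{katz1}), according to which a connection on a vector bundle over a smooth curve in characteristic $0$ that is \emph{globally nilpotent} --- i.e.\ that spreads out over a finitely generated $\bZ$-algebra in such a way that the $p$-curvature of its reduction modulo almost every prime is nilpotent --- automatically has regular singularities along the boundary together with quasi-unipotent local monodromy. Thus the whole argument reduces to producing such a spreading out and to checking the nilpotence of the $p$-curvatures of the reductions, which is exactly what Corollary \ref{nilpotencygeneral} supplies.

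First I would spread out the data. Since $S$ is a smooth affine curve over $\bC$ and $\At$ is smooth and proper over $\cO(S)$, all of this descends to a finitely generated $\bZ$-subalgebra $R_0\subset \bC$: after inverting finitely many integers one obtains a smooth affine curve $\cS$ over $\Spec R_0$ and a smooth and proper DG algebra $\cA_{\idot}$ over $\cO(\cS)$ whose base change along $R_0\to \bC$ recovers $(S,\At)$. Because $\At$ is smooth and proper, $HH_{\idot}(\At,\At)$ is a perfect $\cO(S)$-complex concentrated in a bounded range of degrees $|m|\le d$; enlarging $R_0$ if necessary I may assume the uniform amplitude bound $HH_m(\cA_{\idot},\cA_{\idot})=0$ for $|m|>d$ holds over $\cO(\cS)$, hence on every fiber. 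Shrinking $\Spec R_0$ further, $HP_{\idot}(\cA_{\idot})$ becomes a vector bundle on $\cS$ carrying the Gauss-Manin connection $\nabla$ of Getzler (\S \ref{sGMonHP}), and its formation commutes with base change; in particular it gives back $(HP_{\idot}(\At),\nabla)$ over $\bC$.

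Next, for every closed point $s\in \Spec R_0$ the residue field $k(s)$ is finite, of some characteristic $p$; discarding the finitely many points with $p=2$, the fiber $\cA_{\idot,s}$ is a smooth and proper DG algebra over the smooth affine curve $\cS_s$ over the perfect field $k(s)$, still satisfying $HH_m=0$ for $|m|>d$. By the base-change compatibility of Getzler's construction, the reduction of $(HP_{\idot}(\cA_{\idot}),\nabla)$ at $s$ is the Gauss-Manin connection on $HP_{\idot}(\cA_{\idot,s})$. Corollary \ref{nilpotencygeneral} --- itself a consequence of Theorem \ref{katzformulaspectral} via the identification (\ref{periodiccopercomp}) --- then shows that the $p$-curvature of this connection is nilpotent of exponent $\le d+1$, uniformly in $s$. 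Hence $(HP_{\idot}(\At),\nabla)$ is globally nilpotent, and Katz's theorem (\cite{katz1}) yields both regular singularities along $\Sb-S$ and quasi-unipotence of the local monodromy around each of these points.

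The main obstacle I anticipate is the base-change compatibility invoked above: one must verify that the cyclic chain complex and the Gauss-Manin connection of \S \ref{sGMonHP} commute with the base change $R_0\to k(s)$ at the level of the resulting module-with-connection, so that the $p$-curvature computed by Corollary \ref{nilpotencygeneral} for the reduced family is genuinely the reduction of the characteristic-$0$ object. This hinges on the perfectness and bounded amplitude of $HH_{\idot}$, which force $HP_{\idot}$ to be a finitely generated projective module whose formation is insensitive to base change, together with the functoriality of the construction of \S \ref{sGMonHP}. Everything else --- the spreading out, the openness of smoothness and properness, and the uniformity of the amplitude bound --- is routine once $R_0$ is chosen large enough.
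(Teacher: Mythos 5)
Your proposal is correct and follows essentially the same route as the paper's proof of its Theorem \ref{lmthbis}: spread out $(S,\At)$ over a finitely generated $\bZ$-algebra, arrange base-change compatibility of $HH_{\idot}$ and $HP_{\idot}$ (via finiteness and the Hodge-to-de Rham spectral sequence), apply Corollary \ref{nilpotencygeneral} to the positive-characteristic reductions, and conclude with Katz's theorem. The only remarks worth making are that the spreading-out step you describe as routine is in fact precisely Theorem 1 of To\"en \cite{toen}, which is the paper's key citation here, and that the paper checks nilpotence for the reduction modulo $p$ over $R\otimes\bF_p$ directly, whereas you check it fiberwise at closed points with perfect residue fields---a minor variant that fits the hypotheses of Corollary \ref{nilpotencygeneral} (stated over a perfect field) slightly more literally, at the cost of a short density argument to pass from closed fibers to the filtration over $C\otimes\bF_p$ required in Katz's hypothesis.
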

This result generalizes the Griffiths-Landman-Grothendieck theorem asserting that for a smooth proper scheme $X$ over $S$  
the Gauss-Manin connection on the relative de Rham cohomology $H_{DR}^*(X)$ has regular  singularities,  
 and  its monodromy at infinity is quasi-unipotent. The derivation of Theorem \ref{lmth} from Corollary \ref{nilpotencygeneral} is essentially due to Katz (\cite{katz1}); we explain the argument  in 
 \S \ref{SLMT}. 

\subsection{Proofs.} 
Let us outline the proofs of Theorems \ref{fmth} and \ref{katzformulaspectral}.  Without loss of generality we may assume that $\At$ is a semi-free DG algebra over $R$.
Let $\At^{\otimes p}$ denote the $p$-th tensor power of $\At$ over $R$. This is a DG algebra equipped with an action of the symmetric group $S_p$. In particular, it carries an action of the  group
$\bZ/p\bZ \iso C_p \subset S_p$ of cyclic permutations. We denote by $T(C_p,  \At^{\otimes p})$ the Tate cohomology complex of $C_p$ with coefficients in $\At^{\otimes p}$.
 The algebra structure on $\At^{\otimes p}$ induces one on the Tate cohomology $\vH^{\idot}(C_p,  \At^{\otimes p})$. Moreover, choosing an appropriate ``complete resolution'' one can  lift the cup product on the cochain level giving $T(C_p,  \At^{\otimes p})$ the structure of a DG algebra over $R$.  If $\At = A$ is an associative algebra then, for $p\ne 2$,  one has a canonical isomorphism
 of algebras
 $$\vH^{*}(C_p,  A^{\otimes p}) \iso F^*A  \otimes \vH^{*}(C_p,  \bF_p)  \iso  F^*A [u, u^{-1}, \epsilon ],$$
 $\deg u =2$ and $\deg \epsilon =1$, $\epsilon ^2 =0$.
 In general, Kaledin defines an increasing filtration 
 $$\tau _{\leq \idot}^{dec}T(C_p, \At^{\otimes p}) \subset  T(C_p, \At^{\otimes p})$$
 making $T(C_p, \At^{\otimes p})$ a filtered DG algebra equipped with a canonical quasi-isomorphism of graded DG algebras
\begin{equation}\label{fitltdecgr}
\bigoplus_i \Gr^{\tau}_{i}T(C_p, \At^{\otimes p})\iso F^*\At \otimes \bigoplus_i   \vH^{i}(C_p,  \bF_p)[-i].
\end{equation}
Note that the right-hand side  of (\ref{fitltdecgr})  has a canonical connection - the Frobenius pullback connection. A key observation explained in \S \ref{ConT} is that there is a canonical connection $\nabla$
on the filtered DG algebra $T(C_p, \At^{\otimes p})$, which induces the Frobenius pullback connection on $\Gr^\tau$.

Denote by  $T_{[m, l]}(C_p, \At^{\otimes p})$, ($m\leq l$), the quotient of $\tau _{\leq l}^{dec}T(C_p, \At^{\otimes p})$ by $\tau _{\leq m-1}^{dec}T(C_p, \At^{\otimes p})$. The DG algebra 
$$\cB(\At):= T_{[-1, 0]}(C_p, \At^{\otimes p}),$$
 which is a square-zero extension of $F^*\At$
 $$ F^*\At[1] \rar{\mu} \cB(\At) \rar{} F^*\At$$  
  with a compatible connection $\nabla$,  admits another description. Let $\hat R$ be a flat lifting of $\tR$ over $W(k)$, $\hat i_*$ the functor from the category of DG algebras over $R$ to the category of DG algebras over $\hat R$, which carries a DG algebra over $R$ to the same underlying DG ring with the action of $\hat R$ induced by the morphism $\hat R \to R$, and let $L\hat i^*$ be the left adjoint functor, which carries a DG algebra $C_{\idot}$ over $\hat R$ to the derived tensor product $C_{\idot}\stackrel{L}{\otimes}_{\hat R} R$. For any 
DG algebra $\At$ over $R$ the composition $L\hat i^*  \hat i_* \At$ is an algebra over   $L\hat i^*  \hat i_* R \iso R[\mu]$, where $\deg \mu =-1$, $\mu ^2 =0$.
One can easily check that the functor $L\hat i^*  \hat i_*$ depends on $\tR$ only (in particular, every automorphism of $\hat R$, which restricts to the identity on $\tR$ acts trivially on $L\hat i^*  \hat i_*$). 
Similarly, the morphism of crystalline toposes $\mathrm{Cris}(R/k)\to \mathrm{Cris}(R/W(k))$ induces a functor $\hat i_{*cris}$ from the category of DG algebras in the category of crystals on
 $\mathrm{Cris}(R/k)$ ({\it i.e.}, the category of $R$-modules with integrable connections) to the category of DG algebras in the category of crystals on
 $\mathrm{Cris}(R/W(k))$ ({\it i.e.}, the category of $p$-adically complete $\hat R$-modules with integrable connections) and also its left adjoint functor $L\hat i^{*cris}$. These functors extend the functors $\hat i_*, L \hat i^*$ on sheaves of rings\footnote{Recall that one should think of an algebra with connection on a scheme $X$ over a base ring $R$ as a family of algebras with connection over points of $\Spec(R)$. In terms of this picture, the pushforward and pullback functors $i_{*cris}, i^{*cris}$ are just base change.}.
\begin{Th}\label{cBandLift}
\begin{enumerate}[(a)] Let $\At$ be  a term-wise flat DG algebra over $R$\footnote{Since $R$ has finite homological dimension $\At$ is also $h$-flat over $R$.}. 
\item 
 There is a canonical quasi-isomorphism of DG algebras with connection
 $$(\cB(\At), \nabla )   \iso  L\hat i^{* cris}\hat i_{* cris} F^*\At .$$
\item 
A lifting  $(\tR, \tF)$ of $(R, F)$ over $W_2(k)$ gives rise to a canonical quasi-isomorphism of DG algebras with connection
 $$(\cB(\At), \nabla )  \iso  \cC_{(\tR, \tF)}^{-1} (L\hat i^*  \hat i_* \At, \mu \tilde \kappa).$$
Here   $\tilde \kappa $ is the Kodaira-Spencer class of $\At$ regarded as a derivation of $\At$ with values in $\At \otimes \Omega^1_R$ of degree  $1$ (as defined by formula
\ref{ksclassintrobis}  ),  $\mu \tilde \kappa$ the induced degree $0$ derivation of $L\hat i^*  \hat i_* \At$ with values in $(L\hat i^*  \hat i_* \At) \otimes  \Omega^1_R$  ,  and $ \cC_{(\tR, \tF)}^{-1}$ is the inverse Cartier transform.
\item
A lifting of $\At$ over $\tR$ gives rise to a canonical quasi-isomorphism of DG algebras with connection
$$(\cB(\At), \nabla)\iso \cC_{(\tR, \tF)}^{-1} (\At [\mu], \mu \tilde \kappa).$$
\end{enumerate}
\end{Th}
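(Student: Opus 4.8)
The plan is to establish the three identifications in the order (a), (b), (c), matching their increasing dependence on auxiliary data: (a) is canonical, using only the crystalline structure (equivalently the lift $\hat R$ of $R$ over $W(k)$); (b) specializes the crystalline input to the pair $(\tR,\tF)$ through the explicit inverse Cartier operator (\ref{inversecartieroperator}); and (c) trivializes the derived self-intersection $L\hat i^* \hat i_* \At$ by means of an actual lift of $\At$. The central computational tool in all three parts is the graded identification (\ref{fitltdecgr}), and the strategy is to check statements on the associated graded of the d\'ecalage filtration $\tau^{dec}$ and then lift them along the two-step extension defining $\cB(\At)$.

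Part (a) is the heart of the argument. Both $(\cB(\At),\nabla)$ and $L\hat i^{* cris}\hat i_{* cris} F^*\At$ are square-zero extensions of $F^*\At$ by $F^*\At[1]$ equipped with a connection, so I would produce a canonical comparison map and verify it is a filtered quasi-isomorphism. The natural candidate is induced by the componentwise-multiplicative map $\At\to \At^{\otimes p}$, $a\mapsto a^{\otimes p}$; its image in Tate cohomology is additive and Frobenius-semilinear, so it defines the unit $F^*\At \to \vH^0(C_p,\At^{\otimes p})$, which matches the crystalline unit. For $\At=R$ the computation (\ref{fitltdecgr}) shows that the $[-1,0]$-truncation retains exactly the weights $0$ and $-1$, giving $\cB(R)\iso R[\mu]$ in agreement with $L\hat i^{* cris}\hat i_{* cris}R \iso R[\mu]$; for general flat $\At$ the comparison map is a quasi-isomorphism because (\ref{fitltdecgr}) shows it is one on $\Gr^\tau$. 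The delicate point, which I expect to be the main obstacle, is that this map intertwines the connection constructed in \S\ref{ConT} with the crystalline connection. I would verify this on $\Gr^\tau$, where both connections restrict to the Frobenius-pullback connection on $F^*\At$, and then argue that a connection on a two-step filtered complex is pinned down, up to the required homotopy, by its restriction to $\Gr^\tau$ together with the extension class.

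Part (b) follows by unwinding the definition of $\cC^{-1}_{(\tR,\tF)}$ via the crystalline description of the inverse Cartier transform (\cite{ov}). The point is that $L\hat i^{* cris}\hat i_{* cris}$ applied to $F^*(-)$ computes $\cC^{-1}_{(\tR,\tF)}$ of the underlying crystal: evaluating the crystal $\hat i_{* cris}F^*\At$ against the thickening determined by $(\tR,\tF)$ recovers $F^*$ of a lift, and the discrepancy between the two resulting trivializations of $L\hat i^* \hat i_* \At$ is measured by the operator $\frac{1}{p}\tF^*$, i.e. exactly the operator (\ref{inversecartieroperator}) applied to the Kodaira--Spencer derivation (\ref{ksclassintrobis}). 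Identifying the square-zero parameter $\mu$ with the generator of $L\hat i^* \hat i_* R\iso R[\mu]$ turns this discrepancy into the Higgs field $\mu\tilde\kappa$, which is a genuine Higgs field since $\mu^2=0$ forces $(\mu\tilde\kappa)^2=0$; this yields $(\cB(\At),\nabla)\iso \cC^{-1}_{(\tR,\tF)}(L\hat i^* \hat i_* \At,\ \mu\tilde\kappa)$.

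Part (c) is then immediate from (b). An $h$-flat lift $\tilde\At$ of $\At$ over $\tR$ provides a model for the derived self-intersection, resolving $L\hat i^* \hat i_* \At$ to $\At[\mu]$ as a DG algebra whose Higgs field is $\mu\tilde\kappa$, with $\tilde\kappa$ the Kodaira--Spencer class of the lift (realized concretely as the commutator $[\nabla',d]$ of (\ref{ksclassintrobis})). Substituting into (b) produces $(\cB(\At),\nabla)\iso \cC^{-1}_{(\tR,\tF)}(\At[\mu],\mu\tilde\kappa)$. The remaining point is canonicity, namely independence of the chosen semi-free model and of the resolution, which I would settle by the naturality of every map used, reducing it to the well-definedness of the Kodaira--Spencer class already recorded in \S\ref{ksclassintro}.
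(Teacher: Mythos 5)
There are two genuine gaps, both in your part (a), and they propagate to (b) and (c) because your logical order is the reverse of what the proof actually requires.

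First, the quasi-isomorphism. The map $a\mapsto a^{\otimes p}$ only produces the unit $F^*\At\to \vH^0(C_p,\At^{\otimes p})$, i.e.\ a map into the weight-$0$ part; it does not by itself give a map of square-zero extensions $L\hat i^{*cris}\hat i_{*cris}F^*\At\to\cB(\At)$, and your claim that ``(\ref{fitltdecgrv}) shows it is one on $\Gr^\tau$'' begs the question: (\ref{fitltdecgrv}) identifies the graded pieces of the target, but says nothing about whether your map induces an isomorphism --- rather than zero --- on the weight $-1$ piece $F^*\At[1]$. This is exactly the hard point. The paper constructs the comparison map $\varphi$ by factoring through $T_{[-1,0]}(L\hat i^*\hat i_*\At)$, exploiting that $\hat i_*\At$ admits a semi-free resolution $\hat A_{\idot}$ over $\hat R$ and that over $W(k)$ the \emph{odd} Tate cohomology of $p$-th tensor powers of flat modules vanishes (Lemma \ref{h1(W)}), so that $T_{[-1,0]}(\hat A_{\idot})$ collapses onto $T_{[0,0]}(\hat A_{\idot})\cong\hat i_*F^*\At$ (Proposition \ref{splcase}); and then it proves $\varphi$ is a quasi-isomorphism (Lemma \ref{qis}) by reducing to residue fields and invoking Friedlander--Suslin \cite{fs}: if $\varphi$ were zero on $H^{-1}$, the extension of polynomial functors
\begin{equation*}
0\to F^*V\to (V^{\otimes p})_{C_p}\to (V^{\otimes p})^{C_p}\to F^*V\to 0
\end{equation*}
would split, which it does not. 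Without an input of this kind your argument cannot exclude that the comparison map kills $H^{-1}$, in which case it is not a quasi-isomorphism.

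Second, the connection. Your principle that ``a connection on a two-step filtered complex is pinned down, up to the required homotopy, by its restriction to $\Gr^\tau$ together with the extension class'' is false, and it is false in precisely the way that would trivialize the theorem: two connections inducing the same connection on both graded pieces differ by an arbitrary $R$-linear map $F^*\At\to F^*\At[1]\otimes\Omega^1_R$, and this off-diagonal term is the entire content of parts (b) and (c) --- it equals $C^{-1}_{(\tR,\tF)}(\ks)$, depends on the choice of lifting, and is generically nonzero (indeed the $p$-curvature of $\nabla$ on $\cB(\At)$ is $-\mu F^*(\mu\ks)$, which vanishes on $\Gr^\tau$). So compatibility of connections can never be checked on $\Gr^\tau$ alone. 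This is why the paper's proof runs in the opposite order to yours: it first proves part (c) (Theorem \ref{cris}) by a completely explicit construction --- lifting the connection $\nabla'$ to $\tn$ on $\tF^*\tilde\At$ (Lemma \ref{canconlift}), modelling $\hat i_{*cris}F^*\At$ by the cone $\cF=cone(\hat i_*F^*\At\xrightarrow{p}\tF^*\tilde\At)$, computing $L\hat i^*\cF$ by the flat resolution (\ref{flatres}), and identifying the off-diagonal entry $[\tn,\td]/p$ with $C^{-1}_{(\tR,\tF)}(\ks)$ --- and only then proves the connection statement in part (a) by applying (c) to $F^*\At$, whose Kodaira--Spencer class vanishes, together with functoriality of the Tate-complex connection of \S\ref{ConT}. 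Your plan (a)$\Rightarrow$(b)$\Rightarrow$(c) therefore has no independent proof of the connection compatibility in (a), and (b), (c) inherit that gap.
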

\begin{remarks}
\begin{enumerate}[(a)]
\item If $R$ is a perfect field the above result is due to Kaledin (\cite{Bokst}, Prop. 6.13).
\item The first part of the Theorem together with the projection formula gives a canonical isomorphism
of DG algebras with connections
$$\hat i^{cris}_*\cB(\At) \iso  \hat i^{cris}_* F^*\At  \oplus \hat i^{cris}_* F^*\At [1],$$
where the right-hand side of the equation is the trivial square-zero extension with the Frobenius pullback connection.
However, in general $\cB(\At)$ does not split. For example, from the second part of the Theorem it follows that the $p$-curvature of $\nabla$ on  $\cB(\At)$ equals  $- \mu F^* (\mu \tilde \kappa )$. In particular, it is not zero as long as $\tilde \kappa$ is not $0$.
\end{enumerate}
\end{remarks}
Next, we relate the cyclic homology of $\cB(\At)$ together with the connection induced by the one on $\cB(\At)$ with the periodic cyclic homology of $\At$ with the Gauss-Manin connection.
The two-step fitration $ F^*\At [1] \subset \cB(\At)  $ gives rise to a  filtration $\cV_{m}CC(\cB(\At) )\subset CC(\cB(\At) ) $, $(m=0, -1, -2, \cdots )$, on the cyclic complex of  $\cB(\At)$. 
\begin{Th}\label{cBandCP} Let $\At$ be a term-wise flat DG algebra over $R$. 
We have a canonical quasi-isomorphism of filtered complexes with connections
$$\cV_{[-p+2, -1]}CC(\cB(\At))[1]\iso \cV_{[-p+2, -1]}\overline{CP}(\At). $$
Moreover, the multiplication by $u^{-1}$ on the right-hand side corresponds under the above quasi-isomorphism to the multiplication by the class $B\mu$ in the second negative cyclic homology group of the algebra
$k[\mu]$. 
\end{Th}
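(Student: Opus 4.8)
The plan is to read off the cyclic complex of $\cB(\At)$ from its algebra structure and to identify it, as a filtered complex with connection, with the co-periodic complex. By Theorem \ref{cBandLift}(c) the underlying DG algebra of $\cB(\At)$ is $F^*\At\otimes_k k[\mu]$, the trivial square-zero extension of $F^*\At$ by the shifted diagonal bimodule $F^*\At[1]$; the two-step filtration $F^*\At[1]\subset\cB(\At)$ is exactly the $\mu$-adic (ideal-power) filtration, and it is this filtration that induces $\cV_{\idot}CC(\cB(\At))$. First I would construct the comparison map: the DG-algebra description $\cB(\At)=T_{[-1,0]}(C_p,\At^{\otimes p})$ feeds into Kaledin's comparison between the cyclic object of $\At$ and the $C_p$-Tate construction on the cyclic object of $\At^{\otimes p}$ (the edgewise-subdivision mechanism underlying the conjugate filtration of \cite{cop}). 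This yields a map of filtered complexes $CC(\cB(\At))[1]\to\overline{CP}(\At)$, the shift $[1]$ being forced by $\deg\mu=-1$, and the reduction to the window $[-p+2,-1]$ by the range in which the $2$-periodicity of $\vH^{\idot}(C_p,\bF_p)$ has not yet wrapped around in characteristic $p$.

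The next step is to show this map is a filtered quasi-isomorphism by checking it on associated graded. Here I would use the cyclic Künneth isomorphism: the Eilenberg--Zilber theorem together with the cyclic shuffle product identifies the mixed complex of a tensor product of algebras with the tensor product of mixed complexes, giving $C(\cB(\At))\simeq F^*\Ct(\At,\At)\otimes_k C(k[\mu])$, where I use that the Hochschild complex commutes with Frobenius base change, so $\Ct(F^*\At,F^*\At)=F^*\Ct(\At,\At)$. The heart of the computation is the mixed complex of $k[\mu]$. I would compute its Hochschild homology and Connes operator on an explicit small model and isolate the class $B\mu$ in the second negative cyclic homology group, establishing that multiplication by $B\mu$ is the periodicity operator, i.e. that it raises the $\mu$-adic degree by one and, on the associated graded, turns the $(b,B)$-structure of the $k[\mu]$-factor into the formal variable $u^{-1}$. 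Tensoring with $F^*\Ct(\At,\At)$ then presents $\Gr^{\cV}CC(\cB(\At))$ as $F^*\Ct(\At,\At)((u^{-1}))$ with total differential $b+uB$, matching the recalled description $\Gr^{\cV}\overline{CP}(\At)\iso F^*\Ct(\At,\At)((u^{-1}))$; the comparison map induces the identity there (after the evident identifications), so it is a filtered quasi-isomorphism.

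It remains to propagate the connection. The connection on $CC(\cB(\At))$ is induced from $\nabla$ on $\cB(\At)$ supplied by Theorem \ref{cBandLift}, while that on $\overline{CP}(\At)$ is Gauss--Manin; both are manufactured functorially from the Kodaira--Spencer class $\kappa$, and every map used above (edgewise subdivision, Eilenberg--Zilber, cyclic shuffle, multiplication by $B\mu$) is natural in $\At$. I would therefore verify connection-compatibility on $\Gr^{\cV}$, where by construction both connections reduce to the Frobenius pullback connection on $F^*\Ct(\At,\At)((u^{-1}))$, and then conclude for the filtered objects by finiteness of the filtration within the window $[-p+2,-1]$. This simultaneously identifies $u^{-1}$ on the right with multiplication by $B\mu$ on the left, giving the final assertion.

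The main obstacle I anticipate is not the Künneth formalism but the exact bookkeeping around the odd generator: pinning down that $B\mu$ acts as $u^{-1}$ on the nose rather than up to a unit or only modulo the filtration, and keeping the Koszul signs from $\deg\mu=-1$ consistent across the cyclic shuffle homotopies. I would contain this by fixing once and for all an explicit two-term mixed-complex model for $k[\mu]$, computing $b$, $B$, and the product $B\mu$ directly there, and only afterwards tensoring with $F^*\Ct(\At,\At)$; this keeps the filtration, the co-periodic differential, and the connection simultaneously visible and reduces every remaining verification to the already-recalled properties of the conjugate filtration from \cite{cop}.
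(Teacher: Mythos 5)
Your outline has two genuine gaps, and the first is structural. You begin by invoking Theorem \ref{cBandLift}(c) to identify the underlying DG algebra of $\cB(\At)$ with $F^*\At\otimes_k k[\mu]$, but that identification exists only after choosing a lifting of $\At$ over $\tR$, whereas Theorem \ref{cBandCP} is stated for an arbitrary term-wise flat $\At$. This generality is not cosmetic: the whole point of the theorem is that the comparison $\cV_{[-p+2,-1]}CC(\cB(\At))[1]\iso \cV_{[-p+2,-1]}\overline{CP}(\At)$ is canonical and lifting-independent (it also feeds into Theorem \ref{katzformulaspectral}, which has no lifting hypothesis), while the lifting-dependent data enters only through Theorem \ref{cBandLift}. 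In general $\cB(\At)$ is a non-split square-zero extension of $F^*\At$ by $F^*\At[1]$ --- the splitting of the underlying DG algebra already requires liftability (see the proof of Lemma \ref{qis}), and the remark after Theorem \ref{cBandLift} stresses that $\cB(\At)$ with its connection never splits unless $\tilde\kappa=0$ --- so your K\"unneth identification $C(\cB(\At))\simeq F^*\Ct(\At,\At)\otimes_k C(k[\mu])$ of the \emph{full} mixed complex is unavailable. (Only $\Gr^{\cV}$ of the two-step filtration is canonically split, so K\"unneth is legitimate there, but that is not how you use it.) Compounding this, your claim that the comparison map ``induces the identity'' on associated graded is exactly the content to be proven, not an observation: this is where the paper does its work. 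The paper's mechanism is to promote Kaledin's canonical morphism of cyclic objects $\cB(\At)^{\hush}\to\pi^\flat_{(-2(p-1),0]}i_p^*\At^{\hush}$ to a morphism of modules over the $E_\infty$ algebras $\cB(R)^{\hush}$ and $\pi^\flat_{(-2(p-1),0]}i_p^*R^{\hush}$, base-change along this module structure, and thereby reduce both the filtered quasi-isomorphism and the identification of $u^{-1}$ with $B\mu$ (which is precisely a statement about the module structure over $C(\cB(R))\simeq C(R[\mu])$) to an explicit computation for the unit algebra $R$ alone --- the ``easy Lemma'' closing the proof. You have no substitute for this reduction.

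The second gap is the treatment of connections: horizontality of a filtered map cannot be verified on the associated graded. If your map commutes with the connections on $\Gr^{\cV}$, all you learn is that the discrepancy $\nabla\circ\varphi-(\varphi\otimes\mathrm{id})\circ\nabla$ strictly decreases the filtration; the finiteness of the window $[-p+2,-1]$ does not make it vanish or become null-homotopic. The correct argument is the naturality you mention and then abandon: the connection on the Tate complex (\S\ref{ConT}) and Kaledin's Gauss--Manin connection on $\overline{CP}(\At)$ (\S\ref{sGMonHP}) are both produced by one and the same functorial construction out of the first-order thickening $X^{[2]}$, and the canonical morphism of cyclic objects is a morphism of these diagrams, hence horizontal by construction --- no $\Gr$-level check is involved. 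In summary, what you have written is essentially the paper's derivation of Theorem \ref{fmth} \emph{from} Theorems \ref{cBandCP} and \ref{cBandLift} (the K\"unneth decomposition of $C(\At[\mu])$, the divided-power model $k\langle\mu,B\mu\rangle$, and $B\mu=u^{-1}$), transplanted into the proof of Theorem \ref{cBandCP} itself, where its key inputs are either unavailable without a lifting or presuppose the statement being proved.
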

Let us derive Theorem \ref{fmth} from Theorems \ref{cBandCP} and \ref{cBandLift}. Since the Cartier transform is a monoidal functor, we have by part 3 of Theorem \ref{cBandCP}
$$ (\cV_{[-p+2, -1]}CC(\cB(\At)),  \nabla )\iso \cC_{(\tR, \tF)}^{-1} (\cV_{[-p+2, -1]}CC (\At [\mu]),  \mu \tilde \kappa).$$
We compute the right-hand side using the K{\"u}nneth formula: with obvious notation we have a quasi-isomorphism of mixed complexes
$$\cV_{[-p+2, -1]}C (\At [\mu])\iso C(\At) \otimes \cV_{[-p+2, -1]}C(k[\mu]).$$ 
 The Hochschild  complex of $k[\mu]$ regarded as a mixed complex is quasi-isomorphic to the divided power algebra: 
$$C(k[\mu], k[\mu])\iso k\langle \mu, B\mu \rangle$$
with zero differential and Connes' operator acting by the formulas: $B( (B\mu) ^{[m]})=0$, $B( \mu (B\mu) ^{[m]})=(m+1)(B\mu) ^{[m+1]}$.

  It follows that
$$\cV_{[-p+2, -1]}CC (\At [\mu])\iso \bigoplus_{0\leq m \leq p-3} C(\At) \otimes \mu (B\mu)^{[m]}.$$
Setting $B\mu =u^{-1}$ and using the Cartan formula (\cite{ge}; see also \S \ref{sGMonHP} for a review), we find
$$(\cV_{[-p+2, -1]}CC (\At [\mu],  \mu \tilde \kappa)[-1] \iso (C(\At) \otimes k[u^{-1}]/u^{2-p}, u^{-1}\iota_{\tilde \kappa}).$$
Summarizing, we get
$$(\cV_{[-p+2, -1]}\overline{CP}(\At),  \nabla ) \iso \cC_{(\tR, \tF)}^{-1} (C(\At) \otimes k[u^{-1}]/u^{2-p}, u^{-1}\iota_{\tilde \kappa})[2]$$
This implies the desired result. The derivation of Theorem \ref{katzformulaspectral} is similar.

\subsection*{Acknowledgments}
The authors would like to express their gratitude to D. ~Kaledin for teaching them most of the
mathematics that went into this paper. We are also extremely grateful to the referee 
 for his detailed and helpful
comments, which have greatly helped to improve the exposition. 

A.P. was partially supported by the Russian Academic Excellence Project '5-100' and by Dobrushin stipend.

D.V.  was partially supported by the National Science Foundation Graduate Research Fellowship under Grant No. DMS-1000122015

V.V was partially supported by Laboratory of Mirror Symmetry NRUHSE, RF government grant, ag. 
numbers  \textnumero  $  14.641.31.0001.$

\section{The Tate cohomology complex of $A_{\cdot} ^{\otimes  p} $ }
In this section we construct a connection on the Tate complex $T(C_p, \At ^{\otimes p})$ and prove Theorem \ref{cBandLift}.
\subsection{The Tate cohomology complex.}\label{sstcc}
Let $G$ be a finite group. A complete resolution of the trivial $\bZ[G]$-module $\bZ$ is an acyclic complex of free $\bZ[G]$-modules
$$ \rar{} \cdots P_{-2} \rar{} P_{-1}\rar{} P_{0}\rar{} P_{1} \rar{} P_{2} \rar{} \cdots $$
together with an isomorphism  of $\bZ[G]$-modules 
$$\epsilon: \bZ \iso \ker (d: P_{0}\rar{} P_{1}).$$
One can show that for any two complete resolutions $(\Pt, \epsilon)$, $(\Pt ^{\prime}, \epsilon ')$ there exists a morphism $f_{\idot}:\Pt \to \Pt ^{ \prime }$ of complexes of $\bZ[G]$-modules such that $f_0\circ \epsilon = \epsilon '$ and
such $f_{\idot}$ is unique up to homotopy (in fact,  $Hom(\Pt , \Pt ^{\prime })$ in the homotopy category $Ho(\bZ[G])$ of complexes of  $\bZ[G]$-modules is canonically isomorphic to $\bZ/r\bZ$, where $r$ is the order of $|G|$ : to see this observe that $Hom_{Ho(\bZ[G])}(\Mt , \Pt^{\prime })=0$ if $\Mt$ is either a bounded from above complex of free  $\bZ[G]$-modules or a bounded from below acyclic complex. It follows, using the canonical and ``stupid'' truncations,  that $Hom_{Ho(\bZ[G])}(\Pt, \Pt^{\prime })\simeq
Hom_{Ho(\bZ[G])}(\bZ, \Pt ^{\prime }) \simeq \hat H ^0(G, \bZ)\simeq    \bZ/r \bZ$ ).  Fix  a complete resolution  $(\Pt, \epsilon)$.  For a complex $\Mt$ of $\bZ[G]$-modules
we define its Tate cohomology complex $T(G, \Mt)$ to be
$$ T(G, \Mt): = (\Mt \otimes _\bZ \Pt )^G.$$
This defines a DG functor $T(G, \cdot)$ from the DG category $C(Mod(\bZ[G]))$ of complexes of $\bZ[G]$-modules to the DG category of complexes of abelian groups. By construction, $T(G, \cdot)$ commutes with arbitrary direct sums. 
Also, it easy to check that   $T(G, \cdot)$ carries bounded  complexes of free   $\bZ[G]$-modules and bounded acyclic complexes to acyclic complexes.\footnote{Both statements may fail for unbounded complexes. For example,  $\epsilon$ induces a quasi-isomorphism
$T(G, \bZ) \iso T(G,\Pt)$. Thus,   $T(G, \cdot)$ does not respect arbitrary quasi-isomorphisms.} We denote the cohomology groups of $T(G, \Mt)$ by $\vH^{*}(G, \Mt).$

A {\sf  multiplicative}  complete resolution  is  a complete resolution    $(\Pt, \epsilon)$ together with a DG ring structure  
$$m:  \Pt \otimes \Pt  \to \Pt $$
which is compatible with the $G$-action ({\it i.e.}, $m$ is a morphism of complexes of $\bZ[G]$-modules) such that $\epsilon: \bZ \to  \Pt $ is a morphism of DG rings.
Multiplicative complete resolutions exist:  {\it e.g.},  see   \cite{cf}, Chapter 4, \S 7. From now on $ T(G, \Mt)$ will denote the Tate complex associated with a  fixed multiplicative  complete resolution. Then, for any complexes  $\Mt$, $\Mt'$ of $\bZ[G]$-modules, we get a natural morphism
$$ T(G, \Mt) \otimes T(G, \Mt ')     \to  T(G, \Mt \otimes _\bZ  \Mt ' ),$$
which induces the cup product on the Tate cohomology groups.
In particular, if $\Mt$ is a DG ring with an action of $G$, then the Tate complex $ T(G, \Mt)$ acquires a DG ring structure.

\subsection{The functor $V \mapsto T(C_p, V^{\otimes ^p})$}\label{tatefilt}

Let $R$ be a finitely generated smooth commutative  algebra over a perfect field $k$ of characteristic $p>0$. 
 For any complex $\Vt$ of flat $R$-modules the tensor power $V^{\otimes ^ p}$ over $R$ carries an action of the symmetric group $S_p$. Denote by $\bZ/p\bZ\simeq C_p \subset S_p$ the subgroup of cyclic permutations.  We consider  the functor $T(C_p,\cdot^{\otimes p})$ 
 $$\Vt \mapsto T(C_p,\Vt^{\otimes p})$$ from the category of complexes of flat $R$-modules to the category of complexes of all $R$-modules. This functor has a number of remarkable properties not obvious from the definition. First, if $\Vt=V$ is supported in cohomological degree $0$ and $p>2$, then,  we have a canonical isomorphism of  graded modules over $R\otimes \vH^{*}(C_p, \bF_p)$
 $$\vH^{*}(C_p,  V^{\otimes p}) \iso F^*V  \otimes_{\bF_p}  \vH^{*}(C_p,  \bF_p) \iso  F^*V [u, u^{-1}, \epsilon ],$$
where  $\deg u =2$ and $\deg \epsilon =1$, $\epsilon ^2 =0$.
In  \cite{Bokst}, \S 6.2, Kaledin generalized the above isomorphism: for every complex $\Vt$ of flat $R$-modules and any $p$ he defines a canonical increasing filtration 
 $$\tau _{\leq \idot}^{dec}T(C_p, \Vt^{\otimes p}) \subset  T(C_p, \Vt^{\otimes p})$$
 making $T(C_p, \Vt^{\otimes p})$ a filtered DG module over  $T(C_p, \bF_p)$ (endowed with the canonical filtration) equipped with a canonical quasi-isomorphism of graded DG modules over the graded DG algebra $R\otimes \bigoplus_i  \vH^{i}(C_p,  \bF_p)[-i]$ (\cite{Bokst}, Lemma 6.5)
\begin{equation}\label{fitltdecgrv}
\bigoplus_i \Gr^{\tau}_{i}T(C_p, \Vt^{\otimes p})\iso \bigoplus_i  (F^*\Vt \otimes \vH^{i}(C_p,  \bF_p)[-i]).
\end{equation}
Namely,  consider the (decreasing) stupid filtration on   $\Vt^{\otimes p}=\dots \to (\Vt^{\otimes p})_{i}\to (\Vt^{\otimes p})_{i+1}\to \dots$   rescaled by $p$: $$F^i\Vt^{\otimes p}=\dots\to 0\to (\Vt^{\otimes p})_{ip}\to (\Vt^{\otimes p})_{ip+1}\dots $$
It induces a filtration $F^{\idot}$ on $T(C_p,\Vt^{\otimes p})$. Now, we apply to the filtered complex $T(C_p,\Vt^{\otimes p})$  Deligne's "filtered truncation" construction(\S1.3.3 in \cite{d}) and define 
$$ \tau _{\leq n}^{dec}T(C_p,\Vt^{\otimes p})_i= F^{i-n}T(C_p,\Vt^{\otimes p})_i \cap d^{-1}(F^{i+1-n}T(C_p,\Vt^{\otimes p})_{i+1}) $$
We denote by $T_{[n, m]}(C_p, \Vt^{\otimes p})$, ($n \leq m$), the quotient of $\tau _{\leq m}^{dec}T(C_p, \Vt^{\otimes p})$ by $\tau _{\leq n-1}^{dec}T(C_p, \Vt^{\otimes p})$,
and by $\overline{T}(C_p,\Vt^{\otimes p})$ the completion of $T(C_p,\Vt^{\otimes p})$ with respect to the filtration $ \tau _{\leq n}^{dec}$:
$$\overline{T}(C_p,\Vt^{\otimes p})=  \varprojlim T(C_p,\Vt^{\otimes p})/  \tau _{\leq n}^{dec}T(C_p,\Vt^{\otimes p}).$$
Formula (\ref{fitltdecgrv}) implies the following surprising result.
\begin{lm}[cf. {{\cite{lu}, Proposition 2.2.3}}, see also \S {\rm III}.1.1 in \cite{ns}]\label{tateexact} The functors  $T_{[n,m]}(C_p, \cdot ^{\otimes ^ p})$, ($n\leq m$), and  $\overline{T}(C_p,\cdot ^{\otimes ^ p})$
carry an acyclic complex of flat $R$-modules to an acyclic complex. Moreover,
 $T_{[n,m]}(C_p, \cdot ^{\otimes ^ p})$ and $\overline{T}(C_p,\cdot ^{\otimes ^ p})$ are exact:
if $X\to Y\to Z\to X[1]$ is a triangle of complexes of flat $R$-modules which is distinguished in $D(R)$ then the total complex of the double complexes 
$T_{[n,m]}(C_p,X^{\otimes ^p})\to T_{[n,m]}(C_p, Y^{\otimes ^p})\to T_{[n,m]}(C_p, Z^{\otimes ^ p})$ and
$\overline{T}(C_p,X^{\otimes ^p})\to \overline{T}(C_p, Y^{\otimes ^p})\to \overline{T}(C_p, Z^{\otimes ^ p})$ are acyclic.
\end{lm}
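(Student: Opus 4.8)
The plan is to reduce both assertions to a single principle: although the functor $\Vt \mapsto T(C_p, \Vt^{\otimes p})$ is highly nonlinear (it is built from the $p$-th tensor power and, as the footnote above records, does not even preserve quasi-isomorphisms), its associated graded for $\tau^{dec}_{\leq \bullet}$ is \emph{linear} and \emph{exact}. Indeed, formula (\ref{fitltdecgrv}) supplies a quasi-isomorphism, natural in $\Vt$,
$$\Gr^\tau_i T(C_p, \Vt^{\otimes p}) \iso F^*\Vt \otimes_{\bF_p} \vH^i(C_p,\bF_p)[-i],$$
and on the right $F^*(\cdot)=(\cdot)\otimes_{R,F}R$ is exact because the Frobenius $F\colon R\to R$ is flat (here one uses that $R$ is smooth over the perfect field $k$, so Frobenius is flat by Kunz's theorem), while $\vH^i(C_p,\bF_p)$ is a finite-dimensional $\bF_p$-vector space. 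Hence $\Vt \mapsto \Gr^\tau_i T(C_p, \Vt^{\otimes p})$ is an exact functor: it carries an acyclic complex of flats to an acyclic complex, and, applied to a distinguished triangle $X\to Y\to Z\to X[1]$, it produces a three-term complex whose total complex is acyclic, since $F^*$ sends that triangle to a distinguished triangle.

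Granting this, the finite case $T_{[n,m]}$ ($n\leq m$) follows by assembling finitely many graded pieces. The complex $T_{[n,m]}(C_p,\Vt^{\otimes p})$ carries the finite filtration induced by $\tau^{dec}_{\leq\bullet}$, with graded quotients $\Gr^\tau_i$ for $n\leq i\leq m$, so on an acyclic input a finite filtration with acyclic associated graded forces acyclicity (induction on $m-n$ via the short exact sequence $0\to T_{[n,m-1]}\to T_{[n,m]}\to \Gr^\tau_m\to 0$). The same short exact sequence of functors, together with the $3\times 3$ lemma, upgrades ``exact on each graded piece'' to ``$T_{[n,m]}$ is exact,'' which is precisely the statement that the total complex of $T_{[n,m]}(C_p,X^{\otimes p})\to T_{[n,m]}(C_p,Y^{\otimes p})\to T_{[n,m]}(C_p,Z^{\otimes p})$ is acyclic.

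For the completion $\overline{T}$ I would pass to the limit. Writing $T/\tau^{dec}_{\leq n}=\varinjlim_m T_{[n+1,m]}$ as a filtered colimit of the exact (resp.\ acyclicity-preserving) functors just treated shows that each $T/\tau^{dec}_{\leq n}$ is again acyclic on acyclic input and exact, because homology commutes with filtered colimits. Then $\overline{T}=\varprojlim_n T/\tau^{dec}_{\leq n}$ is the limit of a tower whose structure maps are surjective; consequently the relevant $\varprojlim^1$-terms vanish and the Milnor exact sequence identifies $H^*(\overline T)$ with $\varprojlim_n H^*(T/\tau^{dec}_{\leq n})$. Acyclicity, and (applied to the three-term complex) exactness, therefore pass to the completion.

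I expect the main obstacle to be conceptual rather than computational: the failure of $T(C_p,\cdot^{\otimes p})$ to be exact, or even homotopy-invariant, means one must never apply it directly to a quasi-isomorphism or a cone, and must instead argue entirely through the graded pieces, where the Frobenius-linearization (\ref{fitltdecgrv}) restores exactness. The one point requiring genuine care is the interchange of the infinite completion defining $\overline T$ with the formation of cones and with homology; this is exactly what the surjectivity of the transition maps and the resulting vanishing of $\varprojlim^1$ control, and it is the reason the completed functor is better behaved than $T(C_p,\cdot^{\otimes p})$ itself.
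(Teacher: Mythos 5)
Your proposal is correct and follows essentially the same route as the paper's proof: the paper also reduces everything to the functorial identification of the graded pieces $T_{[n,n]}(C_p,\Vt^{\otimes p})\iso F^*\Vt[-n]$ supplied by (\ref{fitltdecgrv}) and then invokes d\'evissage, which is precisely the finite induction and completion/limit argument you spell out. Your added details (Kunz's theorem for exactness of $F^*$, and the Milnor sequence for $\overline{T}$, where the $\varprojlim^1$-terms vanish simply because all homology groups in the tower are zero) are exactly the content the paper leaves implicit.
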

\begin{proof}
 Using formula (\ref{fitltdecgrv}), we get a functorial quasi-isomorphism
  $$T_{[n,n]}(C_p, \Vt  ^{\otimes ^ p})\iso F^*\Vt[-n].$$ This proves the Lemma for $T_{[n,n]}(C_p, \cdot ^{\otimes ^ p})$. The general case follows by  d\'evissage.
\end{proof}
\begin{rem} 
The proof of Lemma \ref{tateexact} given here is due to Kaledin. We refer  the reader to ({\cite{lu}, Proposition 2.2.3}) and to  \S {\rm III}.1.1 in \cite{ns} for different proofs of similar statements and for the historical background. 
Using finiteness of homological dimension of $R$ one can adapt the argument from \S {\rm III}.1.1 in \cite{ns} to prove that $T(C_p, \cdot)$ has the properties as in Lemma \ref{tateexact}. We will not use this fact.
\end{rem}\label{infinite}
The filtration $\tau _{\leq \idot}^{dec}T(C_p,\cdot^{\otimes p})$ is compatible with the cup product in the obvious sense. In particular,  
if $\At$ is a termwise flat DG algebra over $R$  then the filtration $$\tau _{\leq \idot}^{dec}T(C_p,\At^{\otimes p})$$ defines the structure of a filtered DG algebra on $T(C_p,\At^{\otimes ^p})$ and on 
$\overline{T}(C_p,\At^{\otimes p})$.

\subsection{Connection on the Tate complex} \label{ConT}

Denote $spec\, R$ by $X$. 
  Let  $X^{[2]}$ be the first infinitesimal thickening of the diagonal $\Delta\subset X\times X$ and $p_1,p_2:X^{[2]}\to X$ projections. The following construction is essentially contained in \cite{ka1}, and it does not depend on the fact that $X$ is affine. 

Let $\At$ be a termwise flat DG algebra over $R$. We will construct a connection on the filtered DG algebra $\overline{T}(C_p,\At^{\otimes ^p})$
(and, in particular,  on $T_{[n,0]}(C_p,\At^{\otimes p})$, ($n\leq 0$)), that is, a quasi-isomorphism of filtered DG algebras  $$\nabla :p_1^*\overline{T}(C_p,\At^{\otimes p})\cong p_2^*\overline{T}(C_p,\At^{\otimes p})$$ which is, when restricted to $\Delta$ is equal to identity in the derived category of complexes. The exact sequence of sheaves on $X\times X$ $$0\to\Omega^1_{\Delta}\to\cO_{X\times X}/I_{\Delta}^2\to \cO_{\Delta}\to 0$$ induces two exact sequences of complexes 

$$0\to \At\otimes\Omega^1_X\xrightarrow{\beta} p_{1*}p_2^*\At\xrightarrow{\alpha} \At\to 0
$$
$$
0\to \overline{T}(C_p,\At^{\otimes p})\otimes\Omega^1_X\xrightarrow{\beta'} p_{1*}p_2^*\overline{T}(C_p,\At^{\otimes p})\xrightarrow{\alpha'} \overline{T}(C_p,\At^{\otimes p})\to 0 
$$
\def\ja{(p_{1*}p_2^*\At)}
Giving a connection on $\At$ is equivalent to providing a splitting of the latter extension in the category of filtered DG algebras localized with respect to filtered quasi-isomorphisms. We are going to construct such a splitting. Denote by 
$$0\subset G^{p} \ja^{\otimes p} \subset G^{p-1} \ja^{\otimes p}\subset \cdots \subset  G^0 \ja^{\otimes p}= \ja^{\otimes p}$$
 the filtration induced by  $\At\otimes\Omega^1_X\subset  p_{1*}p_2^*\At$. 

\begin{lm}\label{quotients} For a term-wise flat DG algebra $\At$ the morphism $\alpha$ induces the following isomorphism of DG algebras with the action of $C_p$ $$G^0\ja^{\otimes p}/G^1{\ja}^{\otimes p}\xrightarrow{\sim} \At^{\otimes p}$$ and $\beta$ induces the following isomorphism of complexes with the action of $C_p$ $$\At^{\otimes p}\otimes_{\cO_X}\Omega^1_X\otimes_{\bZ}\mathbb{Z}[C_p]\iso G^1{\ja}^{\otimes p}/G^2{\ja}^{\otimes p}$$
\end{lm}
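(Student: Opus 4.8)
The plan is to obtain both isomorphisms simultaneously by computing the associated graded of the $p$-fold tensor-power filtration $G^\bullet$ and using term-wise flatness to identify it with the $p$-fold tensor power of the associated graded of the two-step filtration on $p_{1*}p_2^*\At$. First I would record the flatness that makes everything work: $\At$ is term-wise flat by hypothesis, $\Omega^1_X$ is locally free because $X$ is smooth, so $M:=\At\otimes\Omega^1_X$ is term-wise flat, and then $N:=p_{1*}p_2^*\At$ is term-wise flat as an extension of the flat modules $\At$ and $M$ in the sequence $0\to M\xrightarrow{\beta} N\xrightarrow{\alpha}\At\to 0$. I also note that $M=\ker\alpha$ is a (differential graded, two-sided) ideal of $N$, since $\alpha$ is a morphism of DG algebras (restriction to the diagonal); hence the filtration $G^\bullet N^{\otimes p}$ by ``number of tensor slots lying in $M$'' is a multiplicative filtration of the DG algebra $N^{\otimes p}=\ja^{\otimes p}$.

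Next I equip $N$ with the two-step filtration $0\subset M\subset N$, so that $\Gr_F N=Q\oplus M$ with $Q:=\At$ in filtration degree $0$ and $M$ in degree $1$, each carrying the differential induced from $\At$ (on $M$ the form factor is inert, being in degree $0$ with zero differential). The heart of the argument is the standard fact that, for a filtration by term-wise flat subcomplexes with term-wise flat quotients, the associated graded of a tensor power is the tensor power of the associated graded: the canonical map $(\Gr_F N)^{\otimes p}\to \Gr_G(N^{\otimes p})$ is an isomorphism of complexes with $C_p$-action. This is proved by tensoring one factor at a time, using that tensoring with the flat module $Q$ (resp.\ $M$) preserves the exactness of the filtration sequences. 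Reading off the grading by the number of $M$-factors gives $G^i/G^{i+1}\cong\bigoplus_{|S|=i}\bigotimes_{j=1}^p X_j$, where $X_j=M$ for $j\in S$ and $X_j=Q$ otherwise.

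The case $i=0$ gives $G^0/G^1\cong Q^{\otimes p}=\At^{\otimes p}$, and because $G^1$ is an ideal this quotient is a DG algebra, with the identification induced by the algebra map $\alpha^{\otimes p}$, manifestly compatible with the cyclic $C_p$-action; this is the first isomorphism. The case $i=1$ gives $G^1/G^2\cong\bigoplus_{j=1}^p \At^{\otimes(j-1)}\otimes(\At\otimes\Omega^1_X)\otimes\At^{\otimes(p-j)}$, the $j$-th summand being the image of $\beta$ inserted in the $j$-th slot (with $\alpha$ applied in the others). Since $\Omega^1_X$ sits in cohomological degree $0$, extracting the form factor from the $j$-th slot costs no Koszul sign, so each summand is canonically $\At^{\otimes p}\otimes_{\cO_X}\Omega^1_X$, while the generator of $C_p$ carries the $j$-th summand to the $(j+1)$-st. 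Hence the $p$ summands assemble into the free module $\At^{\otimes p}\otimes_{\cO_X}\Omega^1_X\otimes_{\bZ}\bZ[C_p]$, on which $C_p$ acts diagonally: by the signed cyclic permutation on $\At^{\otimes p}$, trivially on $\Omega^1_X$, and by the regular representation on $\bZ[C_p]$. This is the second isomorphism.

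I expect the only delicate point to be the equivariance and sign bookkeeping in the last step, namely verifying that the cyclic generator simultaneously shifts the slot index $j\mapsto j+1$ and induces the standard Koszul-signed cyclic permutation on the $\At^{\otimes p}$ factor, so that the regular representation $\bZ[C_p]$ appears with the correct twisting. Because the inserted one-form is even, all signs are absorbed into the cyclic action on $\At^{\otimes p}$ alone, and the compatibility of the two tensor-power filtrations reduces to the flatness computation of the preceding paragraph; the rest is formal.
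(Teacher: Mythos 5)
Your proof is correct, and it is precisely the argument the paper has in mind: the paper dismisses this lemma with ``The proof is straightforward,'' and your flatness-plus-associated-graded computation of $\Gr_G\bigl((p_{1*}p_2^*\At)^{\otimes p}\bigr)$ is the natural way to fill that in. The only cosmetic remark is that your description of $G^1/G^2$ carries the \emph{diagonal} $C_p$-action on $\At^{\otimes p}\otimes\Omega^1_X\otimes\bZ[C_p]$, while the paper later uses the untwisted form (trivial action on $\At^{\otimes p}\otimes\Omega^1_X$, regular on $\bZ[C_p]$); these agree via the standard isomorphism $v\otimes g\mapsto g^{-1}v\otimes g$, so nothing is lost.
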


The proof is straightforward.


By adjunction, we have a map $m: \ja^{\otimes p}\to p_{1*}p_2^*(\At^{\otimes p})$. Since $X\to X^{[2]}$ is a square-zero extension, $m$ factors through $G^2$, so we get the following diagram of complexes of $C_p$-modules in which the top row is a distinguished triangle

\begin{equation}\label{Tatecondef}
\begin{tikzcd}
 \At^{\otimes p}\otimes \Omega^1_X\otimes \bZ[C_p]\arrow[r,"i"] & G^0/G^2\arrow[r,"\pi"]\arrow[d,"m"] & \At^{\otimes p}\arrow[r]&{}\\
 & p_{1*}p_2^*(\At^{\otimes p})
\end{tikzcd}
\end{equation}

The complex of $C_p$-modules $\At^{\otimes p}\otimes \Omega^1_X\otimes \bZ[C_p]$ is isomorphic to the tensor product of the complex $\At^{\otimes p}\otimes \Omega^1_X$ with {\it trivial} $C_p$-action and a free module $\bZ[C_p]$. Thus, Tate cohomology complex of this complex is quasi-isomorphic to $\At^{\otimes p}\otimes \Omega^1_X\otimes T(C_p,\bZ[C_p])$, which is quasi-isomorphic to zero.

Thus, applying the functor $T(C_p,-)$ to (\ref{Tatecondef}),  we  get the following diagram of filtered DG algebras, where $\pi$ is a filtered quasi-isomorphism.

\begin{equation}
\begin{tikzcd}
 T(C_p,G^0/G^2)\arrow[r,"\pi"]\arrow[d,"m"] & T(C_p,\At^{\otimes p})\arrow[r]&{}\\
  p_{1*}p_2^*T(C_p,\At^{\otimes p})
\end{tikzcd}
\end{equation}
Here $T(C_p,\At^{\otimes p})$ is equipped with  the filtration $\tau^{dec}_{\leq \idot}T(C_p,\At^{\otimes p})$, the complex $p_{1*}p_2^*T(C_p,\At^{\otimes p})$ is considered with the  filtration $p_{1*}p_2^*\tau^{dec}_{\leq \idot}T(C_p,\At^{\otimes p})$ and $T(C_p,G^0/G^2)$ is endowed with the image of the filtration $\tau^{dec}_{\leq\idot}T(C_p,G^0)=\tau^{dec}_{\leq\idot}T(C_p,\ja^{\otimes p})$ under the projection $G^0\to G^0/G^2$.

It follows that $\pi$ induces a quasi-isomorphism of the completed Tate cohomology complexes $\pi:  \overline{T}(C_p,G^0/G^2)\iso \overline{T}(C_p,\At^{\otimes p})$. Finally, put $s=m \pi^{-1}$. This is a section of $\alpha'$. By  adjunction, $s$ induces a connection 
\begin{equation}\nabla:p_1^*\overline{T}(C_p,\At^{\otimes p})\cong p_2^*\overline{T}(C_p,\At^{\otimes p})\end{equation}


\begin{rem}
Using Remark \ref{infinite} and the above argument, one can show that   $T(C_p,\At^{\otimes p})$ can also be endowed with a connection. We will not need this fact. 
\end{rem}

\subsection{The connection on the truncated Tate complex}


\newcommand{\hic}{{\hat i^{cris}}}
\newcommand{\hicd}{{\hat i_{cris*}}}
\newcommand{\hicp}{{\hat i^{cris*}}}
\newcommand{\hR}{{\hat R}}
\newcommand{\hA}{\hat{A}}
\newcommand{\hi}{\hat{i}}
\newcommand{\hFr}{\hat{Fr}}

As in the introduction, denote by $\hicd$ and $\hicp$ respectively the direct and inverse image functors between the categories of crystals on $R$ over $k$ and over $W(k)$. By the virtue of Theorem 6.6 from \cite{bo} we view the category $\mathrm{Cris}(R/k)$ of crystals as a full subcategory of the category of $R$-modules with connection. A key result of this section is the following theorem.

\begin{Th}\label{tate}There is a quasi-isomorphism of DG algebras with connection
\begin{equation}
\cB(\At):= T_{[-1,0]}(C_p,\At^{\otimes p})\cong L\hicp\hicd F^*\At=:\mathcal{B}^{cris}(\At)
\end{equation}
\end{Th}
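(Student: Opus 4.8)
The plan is to compare the two functors $\At \mapsto \cB(\At) = T_{[-1,0]}(C_p,\At^{\otimes p})$ and $\At \mapsto \mathcal{B}^{cris}(\At) = L\hicp\hicd F^*\At$ first as DG algebras over $R$ and then as DG algebras with connection, the latter being where the real content lies. Both functors are exact in the sense of Lemma \ref{tateexact}: for $\cB$ this is that Lemma itself, while $\mathcal{B}^{cris}$ is exact because Frobenius is flat on the smooth algebra $R$ (so $F^*$ is exact), $\hicd$ is exact, and $L\hicp$ is a derived functor. Consequently, once a natural comparison map is in hand, its being a quasi-isomorphism can be checked, by d\'evissage, after reducing $\Vt$ to a single flat $R$-module $V$ in degree $0$; there the two-step filtration defining $\cB(V)$ has associated graded $F^*V \oplus F^*V[1]$ by formula (\ref{fitltdecgrv}), so a filtered map inducing the identity on $\Gr^\tau$ is automatically a quasi-isomorphism.

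Next I would make the right-hand side explicit. Using the realization of crystals as modules with connection (\cite{bo}, Th. 6.6), $\hicd$ is restriction of scalars along $\hR \to R$ and $L\hicp$ is derived base change $-\otimes^L_{\hR} R$, so the underlying complex of $\mathcal{B}^{cris}(\At)$ is $F^*\At \otimes^L_{\hR} R$. Since $\hR$ is flat over $W(k)$ and $R = \hR/p\hR$, the Koszul resolution $0 \to \hR \xrightarrow{p} \hR \to R \to 0$ gives $R \otimes^L_{\hR} R \cong R[\mu]$ with $\deg \mu = -1$, $\mu^2 = 0$ (this is the isomorphism $L\hi^*\hi_* R \iso R[\mu]$ recalled in the introduction), whence $\mathcal{B}^{cris}(\At)$ is, as a complex, the square-zero extension $F^*\At[1] \to \mathcal{B}^{cris}(\At) \to F^*\At$. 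This matches the analogous two-step extension underlying $\cB(\At)$, with $\Gr^\tau$ equal to $F^*\At \oplus F^*\At[1]$. To produce the comparison map I would use the $R[\mu]$-algebra structure present on both sides — $\cB(\At)$ is an algebra over $\cB(R) = T_{[-1,0]}(C_p,R) \cong R[\mu]$ because $\At^{\otimes p}$ is a $C_p$-equivariant algebra over $R^{\otimes p} = R$, and $\mathcal{B}^{cris}(\At)$ is an algebra over $\mathcal{B}^{cris}(R) \cong R[\mu]$ — together with the adjunction $L\hicp \dashv \hicd$ and the two compatible projections to $F^*\At$, identifying the two square-zero extensions.

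The heart of the theorem, and the step I expect to be the main obstacle, is the identification of the connections. The connection on $\cB(\At)$ was defined in \S\ref{ConT} abstractly, through the first infinitesimal thickening $X^{[2]}$ of the diagonal and the splitting $s = m\pi^{-1}$ of the extension $\alpha'$, and by construction it induces the Frobenius pullback connection on $\Gr^\tau$; the connection on $\mathcal{B}^{cris}(\At)$ is the one transported from the crystal (stratification) structure of $F^*\At$ through $\hicd$ and $L\hicp$, and it too induces the Frobenius pullback connection on its graded pieces. Crucially, this common connection is not the naive one: by the remark after Theorem \ref{cBandLift} its $p$-curvature is nonzero, so matching the two requires genuinely comparing the two descent data and not merely the graded pieces.

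I would settle this by recognizing that the $X^{[2]}$-construction of \S\ref{ConT} is precisely the first-order part of the crystalline stratification: the section $s$ splits $\alpha'$ over the first infinitesimal neighborhood of the diagonal, which is exactly the datum of the isomorphism $p_1^*\mathcal{B}^{cris}(\At) \cong p_2^*\mathcal{B}^{cris}(\At)$ defining the crystalline connection. Concretely, I would check that the comparison map of the previous paragraph intertwines the two sections of $\alpha'$, so that it is horizontal; equivalently, the connections inducing the Frobenius pullback connection on $\Gr^\tau$ and compatible with the $R[\mu]$-algebra structure and with the crystal structure on sub and quotient form a torsor whose class is pinned down by functoriality, reducing the coincidence to the transparent case $V = R$, where the connection on $R[\mu]$ can be written out explicitly on both sides. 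The principal difficulty throughout is thus not the underlying complex but showing that the two a priori different recipes for the connection — Kaledin's Tate-complex construction and the crystalline pushforward-pullback — produce the same descent datum.
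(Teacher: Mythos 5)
The central gap is the construction of the comparison map, and the mechanism you propose for it would provably fail. By the adjunction $L\hi^*\dashv\hi_*$, a natural map $L\hi^*\hi_*F^*\At\to\cB(\At)$ is the same thing as a natural map $\hi_*F^*\At\to\hi_*\cB(\At)$ in $D(\hR)$, and the only way your ``two compatible projections to $F^*\At$'' can supply one is via a natural section of the projection $\cB(\At)\to F^*\At$ in $D(R)$; but no such natural section exists --- this is exactly the non-splitting that the paper's Lemma \ref{qis} exploits in contrapositive form: a natural splitting of $T_{[-1,0]}(V)\to F^*V$ would split the extension of polynomial functors $0\to F^*V\to (V^{\otimes p})_{C_p}\to (V^{\otimes p})^{C_p}\to F^*V\to 0$, contradicting Friedlander--Suslin (\cite{fs}, Cor.~4.7 and Lemma~4.12). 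Relatedly, your computation of $\mathcal{B}^{cris}(\At)$ via the Koszul resolution $\hR\xrightarrow{p}\hR$ of $R$ identifies it with $F^*\At\oplus F^*\At[1]$ only in $D(\hR)$: the comparison zigzag passes through complexes of $\hR$-modules, so the resulting splitting is neither $R$-linear nor natural (if it were, the theorem itself would then yield the forbidden natural splitting of $\cB(\At)$). The idea your proposal is missing is the paper's passage to $W(k)$: for a flat $\hR$-module $M$ the odd Tate cohomology $\hat{H}^{2n-1}(C_p,M^{\otimes p})$ vanishes and $\hat{H}^{2n}(C_p,M^{\otimes p})\cong \ih_*F^*\ih^*M$ (Lemma \ref{h1(W)}, resting on injectivity of $p$ on $\hR$); hence for a lifting $\hA_{\idot}$ of $\At$ over $\hR$ one gets $T_{[-1,-1]}(\hA_{\idot})=0$, so $T_{[-1,0]}(\hA_{\idot})\iso\hi_*F^*\At$, and applying $L\hi^*$ produces the map in the liftable case (Proposition \ref{splcase}). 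A general, non-liftable $\At$ is then handled by applying this to the canonically liftable algebra $L\hi^*\hi_*\At$ (lift a semi-free resolution of $\hi_*\At$) and composing with $T_{[-1,0]}(L\hi^*\hi_*\At)\to T_{[-1,0]}(\At)$, using functoriality of the Tate construction. Nothing in your proposal plays the role of this step.

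The two remaining steps are also gapped. For the quasi-isomorphism property, your claim that a filtered map inducing ``the identity'' on $\Gr^{\tau}$ is automatically a quasi-isomorphism begs the question: the whole difficulty is to show the map is nonzero (hence an isomorphism) on the graded piece $F^*\At[1]$, i.e.\ on $H^{-1}$ after reduction to a residue field, and this is precisely where the paper needs the Friedlander--Suslin input quoted above; your d\'evissage gives no access to it. For the connections, the torsor-plus-functoriality argument cannot close: the difference of two connections on $\cB(\At)$ inducing the Frobenius pullback connection on $\Gr^{\tau}$ is a natural $\Omega^1_R$-valued transformation, and such transformations need not vanish merely because they vanish for $\At=R$ --- the Kodaira--Spencer class $\tilde\kappa$ is itself a natural transformation of exactly this sort, zero for $R$ but nonzero in general. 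The paper avoids any such argument: it reduces to the liftable case by functoriality of the Tate connection along $L\hi^*\hi_*\At\to\At$ together with Theorem \ref{cris} applied to $L\hi^*\hi_*\At$ (whose Kodaira--Spencer class is trivialized after pullback), and in the liftable case it verifies horizontality of $T_{[-1,0]}(\hA_{\idot})\to\hi_*F^*\At$ by an explicit check, applying $T_{[0,0]}$ to the diagram (\ref{Tatecondef}) that defines the connection.
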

A proof of Theorem \ref{tate} is given in \S \ref{pftate}.

Now let $\tR$ be the lifting of $R$ over $W_2(k)$ and choose a lifting $\tF$ of the Frobenius morphism on $\tR$. Choose also a lifting $\hR$ of $\tR$ over $W(k)$. Consider the functors 

\begin{equation}
\begin{matrix}
\hi_*:D(Mod-R)\to D(Mod-\hR) &  L\hi^*:D(Mod-\hR)\to D(Mod-R) \\
\ti_*:D(Mod-R)\to D(Mod-\tR) & L\ti^*:D(Mod-\tR)\to D(Mod-R)
\end{matrix}
\end{equation}

Again, by Theorem 6.6 in \cite{bo}, the categories of crystals on $R$ over $W_2(k)$ and $W(k)$ are equivalent to the categories of respectively $\tR$- and $p$-adically complete $\hR$-modules with flat quasi-nilpotent connection. Also, direct and inverse images are compatible with the forgetful functors $u_{R/W_2(k)*}:\mathrm{Cris}(R/W_2(k))\to Mod-\tR;u_{R/W(k)*}:\mathrm{Cris}(R/W(k))\to Mod-\hR$. So, as a complex of $R$-modules $\cB^{cris}(\At)$ is quasi-isomorphic to $L\hi^*\hi_*F^*\At$ (note, however, that the quasi-isomorphism in Theorem \ref{tate} does not depend on any choices of liftings)

\begin{Th}\label{cris}A lifting of $\At$ to a DG algebra $\tilde{\At}/\tR$ gives a quasi-isomorphism of DG algebras with connection
\begin{equation}
\cB^{cris}(\At)\cong \cC^{-1}_{(\tR,\tF)}(\At[\mu], \mu \ks )
\end{equation}
where $\mu$ is a free generator in degree $1$ and $\cC^{-1}$ is the inverse Cartier transform in the sense of \S 1.1.
\end{Th}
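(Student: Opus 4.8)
The plan is to establish the quasi-isomorphism of Theorem \ref{cris} by combining the crystalline description of $\cB^{cris}(\At)$ from Theorem \ref{tate} with the explicit formula for the inverse Cartier transform from \S\ref{s.s.r.f.l.m.}. The starting point is the identification $\cB^{cris}(\At) \cong L\hicp\hicd F^*\At$, which as a complex of $R$-modules is quasi-isomorphic to $L\hi^*\hi_* F^*\At$. Since $\At$ is term-wise flat over $R$, and using that $R$ has finite homological dimension, I would first compute $\hi_* F^*\At$ and its derived pullback. The key computation is that $L\hi^*\hi_* R \cong R[\mu]$ with $\deg \mu = -1$ and $\mu^2 = 0$ — this is precisely the statement that the cotangent complex of $R$ over $\hR$ (equivalently, the derived self-intersection governed by the square-zero structure) contributes a single exterior generator in degree $-1$. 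Granting this, $L\hi^*\hi_* F^*\At$ becomes an algebra over $R[\mu]$, and I would identify its underlying object with $F^*\At \otimes_R R[\mu] = F^*\At[\mu]$.

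The heart of the argument is to track the connection. The functor $L\hicp\hicd$ on crystals is, by the remarks after Theorem \ref{cBandLift} in the introduction, a \emph{crystalline} (hence connection-respecting) operation, so the connection on $\cB^{cris}(\At)$ is determined canonically. The task is to show that, \emph{after} choosing the lifting $\tilde\At$ over $\tR$ together with the Frobenius lift $\tF$, this connection matches the one produced by applying $\cC^{-1}_{(\tR,\tF)}$ to the Higgs module $(\At[\mu], \mu\ks)$. Unwinding the definition from \S\ref{s.s.r.f.l.m.}, $\cC^{-1}_{(\tR,\tF)}(\At[\mu],\mu\ks)$ has underlying module $F^*(\At[\mu])$ equipped with $\nabla_{can} + C^{-1}_{(\tR,\tF)}(\mu\ks)$. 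Thus I must exhibit a natural isomorphism $L\hi^*\hi_* F^*\At \cong F^*(\At[\mu])$ of $R$-modules under which the canonical crystalline connection is carried to $\nabla_{can}$ twisted by $C^{-1}_{(\tR,\tF)}(\mu\ks)$. The appearance of the Kodaira-Spencer class is the crucial geometric input: the failure of $\tilde\At$ to be parallel with respect to a chosen connection $\nabla'$ on the graded algebra is measured by $[\nabla',d]$, whose class is $\ks$, and this is exactly what feeds into the Higgs field $\mu\ks$ after multiplying by the degree-$(-1)$ generator.

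Concretely, I would proceed as follows. First, fix a semi-free model for $\At$ and a lifting $\tilde\At$ over $\tR$, together with a connection $\nabla'$ on the underlying graded algebra; the commutator $[\nabla',d]$ represents $\ks$ as in (\ref{ksclassintrobis}). Second, using $\tF$, construct the Frobenius-semilinear comparison between $F^*\At$ and the reduction of the $p$-power structure that underlies the crystalline pushforward, so that the divided-power structure of $\mathrm{Cris}(R/W(k))$ produces the extra generator $\mu$ and the $\frac{1}{p}\tF^*$ normalization built into $C^{-1}_{(\tR,\tF)}$ in (\ref{inversecartieroperator}). Third, compare the two connections term by term: the canonical connection on the derived pullback differs from the Frobenius-pullback connection $\nabla_{can}$ by a correction term supported in the $\mu$-direction, and I would verify that this correction is precisely $C^{-1}_{(\tR,\tF)}(\mu\ks)$. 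I expect the main obstacle to be the third step — matching the connection correction term with the inverse Cartier operator applied to $\mu\ks$. The difficulty is that both sides are only canonically defined in the derived/filtered category (up to filtered quasi-isomorphism), so the comparison must be carried out compatibly with the $\tau^{dec}$-filtration and the identifications of Lemma \ref{quotients}, while correctly keeping track of the Frobenius twist and the $\frac{1}{p}$ normalization that makes $C^{-1}_{(\tR,\tF)}$ well-defined modulo $p$. The monoidality of the Cartier transform, which lets one reduce the algebra statement to the underlying module-with-connection statement, should streamline this, but verifying that the square-zero product on $\At[\mu]$ is respected is where the care is needed.
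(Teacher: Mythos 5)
Your overall strategy is the same as the paper's: identify the underlying complex of $\cB^{cris}(\At)=L\hat i^{cris*}\hat i_{cris*}F^*\At$ with $F^*\At\oplus F^*\At[1]=F^*(\At[\mu])$, and then show that the crystalline connection equals $\nabla_{can}$ plus an off-diagonal correction term which is identified with the inverse Cartier operator applied to $\mu\ks$. However, the proposal stops exactly where the proof has to begin: you never construct a concrete chain-level model of the crystalline pushforward $\hat i_{cris*}F^*\At$ on which the connection and its derived pullback can actually be computed, and without such a model your ``correction term supported in the $\mu$-direction'' is not a defined object that can be compared with $C^{-1}_{(\tR,\tF)}(\mu\ks)$. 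The paper's mechanism is the explicit cone $\cF_{\idot}=cone(\tilde i_*F^*\At\xrightarrow{p}\tF^*\tilde{\At})$ --- this is precisely where the lifting $\tilde{\At}$ enters --- together with a preliminary lemma (Lemma \ref{canconlift}) which uses $\tF$ to manufacture a connection $\widetilde{\nabla}$ on $\tF^*\tilde{\At}$ out of the chosen $\nabla'$ on $\bigoplus A_i$; this is canonical because $\tF^*$ vanishes on $1$-forms modulo $p$, so the choice of lift of $\nabla'$ is immaterial. Since $\widetilde{\nabla}$ and $\widetilde{d}$ commute modulo $p$, the commutator $[\widetilde{\nabla},\widetilde{d}]$ is divisible by $p$, and $\frac{1}{p}[\widetilde{\nabla},\widetilde{d}]$ is exactly the off-diagonal entry of an explicit integrable connection on $\cF_{\idot}$, which is simultaneously made into a DG algebra (the trivial square-zero extension of $\tF^*\tilde{\At}$ by $\tilde i_*F^*\At[1]$) modelling $\hat i_{cris*}F^*\At$. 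The desired identity $\frac{1}{p}[\widetilde{\nabla},\widetilde{d}]=C^{-1}_{(\tR,\tF)}(\ks)$ is then a one-line computation directly from definition (\ref{inversecartieroperator}). None of this, nor any substitute for it, appears in your outline: you flag this comparison as ``the main obstacle'' but supply no idea for overcoming it, so what you have is a plan rather than a proof.

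Two further points. First, your flatness discussion misses the actual difficulty. The issue is not term-wise flatness of $\At$ over $R$ or finite homological dimension of $R$, but that $\hat i_{cris*}F^*\At$ is $p$-torsion, hence never flat over $\hat R$, so computing $L\hat i^*$ compatibly with the connection and the algebra structure requires a genuine flat resolution over $\hat R$. The paper builds one by hand (Lemma \ref{niceres}(ii): lift the graded algebra and the differential to $\hat R$, observe the square of the lifted differential is divisible by $p^2$, and assemble a two-term matrix resolution), and this is what justifies replacing $L\hat i^*\cF_{\idot}$ by the naive pullback $\hat i^*\cF_{\idot}$, on which the connection can be read off. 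Second, the generator $\mu$ does not come from ``the divided-power structure of $\mathrm{Cris}(R/W(k))$''; it is the Tor class arising from the Koszul resolution $\hat R\xrightarrow{p}\hat R$ of $R$, i.e., from $\mathrm{Tor}^{\hat R}_{*}(R,R)\iso R[\mu]$ --- concretely, it is the shifted summand $\tilde i_*F^*\At[1]$ inside the cone $\cF_{\idot}$. Your first mention of this (via the exterior generator on the cotangent complex) is the right idea; the later attribution to divided powers is not.
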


\subsection{Proof of Theorem \ref{cris}}

Replace $\tA$ by a semi-free resolution (cf. \cite{dr} 13.4) over $\tR$ and $\At$ by $\tA\otimes_{\tR}R$ (the latter will be automatically semi-free over $\tR$). Fix a connection $\nabla'$ on the free algebra $\bigoplus A_i$. It might not be compatible with the differential -- the Kodaira-Spencer class measures this incompatibility: $\ks=[\nabla',d]$.

\begin{lm}\label{canconlift} For a free module $B/\tilde{R}$ a connection $\nabla_0$ on $\ti^*B$ gives rise to a connection on $\tF^*(B)$ which reduces to the canonical connection on $F^*B$ under $\ti^*$.
\end{lm}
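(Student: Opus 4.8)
The plan is to lift $\nabla_0$ to a connection on $B$ itself and then pull it back along $\tF$. Since $B$ is free over $\tR$, I fix an $\tR$-basis $\{b_\alpha\}$, whose reduction $\{\bar b_\alpha\}$ is an $R$-basis of $\ti^*B=B/pB$. Writing $\nabla_0(\bar b_\alpha)=\sum_\beta\bar b_\beta\otimes\omega_{\beta\alpha}$ with $\omega_{\beta\alpha}\in\Omega^1_R$, I choose lifts $\tilde\omega_{\beta\alpha}\in\Omega^1_{\tR}$ and set $\tilde\nabla_0(b_\alpha)=\sum_\beta b_\beta\otimes\tilde\omega_{\beta\alpha}$; this is a connection on $B$ over $\tR$ with $\ti^*\tilde\nabla_0=\nabla_0$. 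The connection claimed by the lemma is then the Frobenius pullback $\nabla:=\tF^*\tilde\nabla_0$ on $\tF^*B$, which is automatically a connection.

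Everything hinges on one computation. In the basis $\{1\otimes b_\alpha\}$ of $\tF^*B$ the connection matrix of $\nabla$ has entries $\tF^*(\tilde\omega_{\beta\alpha})\in\Omega^1_{\tR}$, where $\tF^*$ is the pullback of forms appearing in \S\ref{s.s.r.f.l.m.}. Because $\tF$ lifts Frobenius, $d(\tF(x))\equiv 0\pmod p$ for every coordinate $x$, and hence $\tF^*\Omega^1_{\tR}\subseteq p\,\Omega^1_{\tR}$; this is exactly the divisibility that makes the operator $\tfrac1p\tF^*$ of \S\ref{s.s.r.f.l.m.} well defined. Two consequences follow. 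First, reducing modulo $p$ annihilates every entry $\tF^*(\tilde\omega_{\beta\alpha})$, so $\ti^*\nabla$ has vanishing connection matrix on $\{1\otimes\bar b_\alpha\}$, which is the defining property of the canonical connection (equivalently $\ti^*\nabla=F^*\nabla_0=\nabla_{can}$, the last equality because $dF=0$ in characteristic $p$); this is the reduction asserted in the lemma. Second, replacing $\tilde\nabla_0$ by another lift changes the $\tilde\omega_{\beta\alpha}$ by elements of $p\,\Omega^1_{\tR}$, and $\tF^*(p\,\Omega^1_{\tR})\subseteq p^2\Omega^1_{\tR}=0$ since $p^2=0$ in $\tR$; hence $\nabla$ depends on $\nabla_0$ alone and not on the auxiliary lift, a fact worth recording for the use of this construction in the proof of Theorem \ref{cris}.

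The only substantive input, and the step I expect to demand the most care, is the $p$-divisibility $\tF^*\Omega^1_{\tR}\subseteq p\,\Omega^1_{\tR}$ together with the identification of the reduced pullback connection with $\nabla_{can}$; both come down to the characteristic-$p$ identity $d(r^p)=0$ and its integral lift $d(\tF(x))\equiv0\pmod p$. The remaining ingredients, namely lifting $\nabla_0$ and reading off the connection matrix, are immediate from the freeness of $B$ and the formalism of pullback connections.
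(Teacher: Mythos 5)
Your proof is correct and takes essentially the same route as the paper: both construct the desired connection as the pullback along $\tF$ of a lift of $\nabla_0$ to $B$, and both rest on the same key fact that $\tF^*$ is divisible by $p$ on $\Omega^1_{\tR}$ (equivalently, $\tF^*\omega$ depends only on $\omega \bmod p$), which kills the lifted connection matrix upon reduction and makes the construction independent of the choice of lift. The only cosmetic difference is that you use a basis to lift $\nabla_0$ to an honest connection on $B$, so the pullback is automatically well defined, whereas the paper lifts $\nabla_0$ merely as a $W_2(k)$-linear map $\nabla_0'\colon B\to B\otimes\Omega^1_{\tR}$ and then checks that the pullback formula is well defined because $\nabla_0'$ satisfies Leibniz modulo $p$.
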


\begin{proof}Lift $\nabla_0$ to a map of $W_2(k)$-modules $\nabla'_0:B\to B\otimes \Omega^1_{\tilde{R}/W_2(k)}$. Then define a connection $\tn$ on $B$ as the pullback of $\nabla'_0$ under $\tF$. Namely, for $f\otimes x\in \tilde{R}\otimes_{\tF,{\tilde{R}}}B$ put
\begin{equation}\tn(f\otimes x)=x\otimes df+f\cdot \tF^*(\nabla'_0(x))
\end{equation}

Since $\nabla'_0$ modulo $p$ is a connection, $\tn$ is actually a well-defined connection on $B$(i.e., does not depend on the way of representing an element of $\tF^*B$ as $f\otimes x$). It does not depend on the choice of $\nabla'_0$ because for a $1$-form $\omega\in \Omega^1_{\tR/W_2(k)}$ the value of $\tF^*(\omega)$ depends only on $\ti^*\omega$ since $\ti^*\tF$ is zero on $1$-forms.
\end{proof}

Applying the lemma to the underlying $\tR$-module $B=\bigoplus\tA_i$ of the given lifting and the connection $\nabla'$, we get a connection $\tn$. Since $\tn$ and $\td$ commute modulo $p$, we get the following $\tR$-linear map

\begin{equation}
\frac{[\tn,\td]}{p}:\tF^*\tA_i\to \ti_*F^*A_{i+1}\otimes \Omega^1_{\tR/W_2(k)}
\end{equation}

We are now ready to prove the theorem. Put $\cF_{\idot}=cone(\ti_*F^*\At\xrightarrow{p}\tF^*\tilde{\At})$. This is a complex of $\tR$-modules with terms $$\cF_i=\tF^*\tA_i\oplus \ti_*F^*A_{i+1}$$ and the differential given by $(x,y)\mapsto (d_{\tA}x+(-1)^ipy,d_{A}y)$. 

Let $r:\cF_{\idot}\to \ti_*F^*\At$ be the morphism which maps $(x,y)\in \cF_i$ to the reduction of $x$ modulo $p$ in $\ti_*F^*A_i$. Note that $r$ is a morphism of complexes because $p\in \tR$ acts by zero on $\ti_*F^*\At$. 

\newcommand{\ih}{\hat{i}}

\begin{lm}\label{niceres}(i) $r$ is a quasi-isomorphism.

(ii) Considering further $\cF_{\idot}$ as a complex of $\hR$-modules, the canonical map $L \hi^*\cF_{\idot}\to \hi^*\cF_{\idot}$ is a quasi-isomorphism.
\end{lm}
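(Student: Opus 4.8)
The plan is to derive both parts from a single contractibility computation, after recognizing $\cF_\idot$ as the cone of the termwise injective map ``multiply by $p$''.

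\emph{Part (i).} Since $\tA$ is semifree over $\tR$ and $\tR$ is flat over $W_2(k)$, the sequence $0\to k\xrightarrow{p}W_2(k)\to k\to 0$ yields, after tensoring with the free modules $\tF^*\tA_i$, a termwise exact sequence of complexes of $\tR$-modules
$$0\to\ti_*F^*\At\xrightarrow{\ \cdot p\ }\tF^*\tA\xrightarrow{\ \rho\ }\ti_*F^*\At\to 0,$$
where the first map is ``lift to $\tF^*\tA$ and multiply by $p$'' (well defined as $p^2=0$ in $\tR$) and $\rho$ is reduction mod $p$. By construction $\cF_\idot=cone(\cdot p)$, and $r$ is exactly the canonical map from this cone to $\mathrm{coker}(\cdot p)=\tF^*\tA/p\tF^*\tA=\ti_*F^*\At$; for a termwise injective morphism of complexes this canonical map is always a quasi-isomorphism, which proves (i). Concretely $\ker r=\mathrm{ann}_{\cF}(p)$, with $(\ker r)_i=p\tF^*\tA_i\oplus\ti_*F^*A_{i+1}$, and after dividing the first summand by $p$ its differential becomes $(\bar z,v)\mapsto(d_A\bar z\pm v,\,d_Av)$, i.e.\ $\ker r\cong cone(\id_{F^*\At})$ is contractible. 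I would record this contractibility, since it also drives (ii).

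\emph{Part (ii).} I would compute $L\hi^*$ through the flat resolution $[\hR\xrightarrow{p}\hR]\xrightarrow{\sim}R$ over $\hR$ (valid because $p$ is a nonzerodivisor in the torsion-free ring $\hR$), so that $L\hi^*\cF_\idot$ is represented by $\mathrm{Tot}[\cF_\idot\xrightarrow{p}\cF_\idot]$ and the canonical map $c\colon L\hi^*\cF_\idot\to\hi^*\cF_\idot=\cF_\idot/p\cF_\idot$ is the evident projection. As $c$ is termwise surjective, it is a quasi-isomorphism iff $\ker c$ is acyclic. A short diagram chase, using the two-term shape of $L\hi^*\cF_\idot$, identifies $\ker c$ with $cone(\cdot p\colon\cF_\idot\to p\cF_\idot)$; and since $\cdot p$ is surjective onto $p\cF_\idot$ with kernel $\mathrm{ann}_{\cF}(p)=\ker r$, the contractibility established in (i) shows $\cdot p$ is a quasi-isomorphism. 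Hence $\ker c$ is acyclic and $c$ is a quasi-isomorphism.

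The step I expect to be the main obstacle is the contractibility of $\mathrm{ann}_{\cF}(p)$, or rather the bookkeeping underlying it. The delicate point is that the connecting term in the differential of the cone $\cF_\idot$ is \emph{not} literal multiplication by $p$ in the $\hR$-module structure -- that map kills the summand $\ti_*F^*A_{i+1}$ -- but the nonzero map ``lift to $\tF^*\tA$ and multiply by $p$''. It is exactly this term that turns $\mathrm{ann}_{\cF}(p)$ into $cone(\id_{F^*\At})$ rather than a non-acyclic direct sum $F^*\At\oplus F^*\At[1]$. Conceptually, the comparison maps $c_M\colon L\hi^*(\ti_*F^*\At)\to\hi^*(\ti_*F^*\At)$ and $c_N\colon L\hi^*\tF^*\tA\to\hi^*\tF^*\tA$ each fail to be quasi-isomorphisms -- the first because $\ti_*F^*\At$ is an $\hR/p$-module, the second because $\tF^*\tA$ is an $\hR/p^2$-module -- and the content of (ii) is that these two $\mathrm{Tor}$-errors cancel inside the cone $\cF_\idot$, the cancellation being effected precisely by that connecting term.
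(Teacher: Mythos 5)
Your proof is correct. Part (i) is essentially the paper's own argument: the paper likewise observes that $r$ is termwise surjective with kernel isomorphic to $cone(\id)$ on $\ti_*F^*\At$; your identification of $\ker r$ (after dividing the first summand by $p$) with $cone(\id_{F^*\At})$ is the same computation, just prefaced by the general remark about cones of termwise injective maps. Part (ii) is where you genuinely diverge. The paper constructs an explicit termwise-free resolution $C_\idot\to\cF_\idot$ over $\hR$, with $C_i=\cA_i\oplus\cA_{i+1}$ built from a lift of the graded algebra and a lift $\cd$ of the differential (the off-diagonal entry $(-1)^i\cd^2/p$ requiring the divisibility checks $p\mid\cd^2$ and $p^2\mid\cd^2$), proves $\rho\colon C_\idot\to\cF_\idot$ is a quasi-isomorphism, and then compares $L\ih^*C_\idot\to\ih^*C_\idot$ with $L\ih^*\cF_\idot\to\ih^*\cF_\idot$, the key point being that $\ih^*C_\idot\to\ih^*\cF_\idot$ is an isomorphism since both reduce mod $p$ to $F^*\At\oplus F^*\At[1]$. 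You instead resolve $R$ itself by the Koszul complex $[\hR\xrightarrow{p}\hR]$ --- legitimate, since a bounded complex of flat modules is K-flat and hence computes $\cF_\idot\otimes^L_{\hR}R$ --- and reduce everything to the acyclicity of $\mathrm{ann}_{\cF}(p)=\ker r$ from part (i), via the identification of the kernel of the comparison map with $cone(p\colon\cF_\idot\to p\cF_\idot)$; that identification, and the equality $\mathrm{ann}_{\cF}(p)=\ker r$ (using that $p$ kills the summand $\ti_*F^*A_{i+1}$ and that $\ker(p)=p\tF^*\tA_i$ by freeness), both check out. Your route is shorter, needs no lifting of data to $\hR$, and makes transparent that both halves of the lemma rest on a single contractibility statement. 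What the paper's heavier construction buys is the explicit flat resolution (\ref{flatres}) itself, which is not discarded after the lemma: the proof of Lemma \ref{qis} later re-uses exactly ``the resolutions from the proof of Lemma \ref{niceres}'' to compute $L\ih^*\ih_*F^*\At$, so some version of that explicit complex is still needed elsewhere even if one proves the present lemma your way.
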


\begin{proof}
(i) It is clear as $r$ is term-wise surjective and its kernel is isomorphic to $cone(\ti_*F^*\At\xrightarrow{id}\ti_*F^*\At)$ which has zero cohomology.

(ii) Terms of $\cF_{\idot}$ are not flat over $\hR$ so, a priori, there might be non-zero higher derived functors of $\hi^*$. Let $\bigoplus \cA_i$ be a lifting of the graded algebra $\bigoplus \tF^*\tA_i$ to a free graded algebra over $\hR$. Pick also a lifting $\cd:\bigoplus \cA_i\to \bigoplus \cA_i[1]$ of the differential $\td$ ($\cd$ is not a differential anymore -- its square need not be zero). It enables us to write down the following resolution of $\ih_*\At$. Put 

\begin{equation}\label{flatres}C_i=\cA_i\oplus \cA_{i+1}; d_C=\left(\begin{matrix}\cd & (-1)^ip\\ (-1)^i\frac{\cd^2}{p} & \cd\end{matrix}\right)
\end{equation}

$\cd^2$ is divisible by $p$ because $d^2=0$ on $F^*\At$ and modules $\cA_i$ are free over $\hR$. Moreover, $\delta^2$ is divisible by $p^2$ because $\td^2=0$ on $\tA$. Hence, $\frac{\delta^2}{p}$ is divisible by $p$, so the reduction maps $C_i\to \cF_i$ give a morphism of complexes $\rho:C_{\idot}\to\cF$. Actually, $\rho$ is a quasi-isomorphism. Indeed, composing it with $r$ we get a term-wise surjective morphism of complexes with kernel given by $K_i=p\cA_i\oplus \cA_{i+1}$ and the differential restricted from $C_{\idot}$. For any $(x,y)\in K_i$ such that $d_C(x,y)=0$ we have $(x,y)=d_C(0,(-1)^{i-1}\frac{x}{p})$ so $K_{\idot}$ is acyclic and $C_{\idot}$ is an $\hR$-flat resolution of $\cF$.  We get a commutative diagram

\[
\begin{tikzcd}
L\ih^*C_{\idot}\arrow[r,"\sim"]\arrow[d] & \ih^*C_{\idot}\arrow[d] \\
L\ih^*\cF_{\idot}\arrow[r] & \ih^*\cF_{\idot}
\end{tikzcd}
\]

Left vertical arrow is a quasi-isomorphism because $C_{\idot}\to\cF_{\idot}$ is a quasi-isomorphism and the right vertical arrow is an isomorphism because both $C_{\idot},\cF_{\idot}$ reduce modulo $p$ to the complex $F^*\At\oplus F^*\At[1]$. Thus, the lower arrow is a quasi-isomorphism.
\end{proof}

Endow  $\cF$ with the structure of a DG algebra with connection. As a DG algebra $\cF$ is the trivial square-zero extension of $\tF^*\tilde{\At}$ by the bimodule $\ti_*F^*\At[1]$. Explicitly, the product of $(x,y)\in \tF^*\tA_i\oplus \ti_*F^*A_{i+1}$ and $(x',y')\in \tF^*\tA_j\oplus \ti_*F^*A_{j+1}$ is defined to be $(xx',(-1)^jyx'+(-1)^ixy')$. To see that this algebra structure is compatible with the differential it is enough to check that $D:(x,y)\mapsto ((-1)^ipy,0)$ is a derivation because the diagonal part $(x,y)\mapsto (d_{\tA}x,d_{A}y)$ of $d_{\cF}$ is a derivation by default. For $(x,y)\in \cF_i,(x',y')\in \cF_j$ we have $D((x,y)(x',y'))=((-1)^{i+j}p((-1)^jyx'+(-1)^ixy'),0)=((-1)^ipy,0)(x',y')+(x,y)((-1)^jpy',0)=D((x,y))(x',y')+(x,y)D((x',y'))$.

Next, define a connection by

\begin{equation}
\nabla_{\cF}=\left(
\begin{matrix}
\tn & 0 \\ (-1)^i\frac{[\tn,\td]}{p} & \ti_*\nabla^{can}
\end{matrix}
\right):\tF^*\tilde{A}_i\oplus \ti_*F^*A_{i+1}\to (\tF^*\tilde{A}_{i}\oplus \ti_*F^*A_{i+1})\otimes_{\tR} \Omega^1_{\tR/W_2(k)}
\end{equation}

The entry below the diagonal is chosen so that this connection commutes with the differential on the DG algebra. To ensure that this connection respects the algebra structure  it is, as above, enough to check that $(x,y)\mapsto (0,(-1)^i\frac{[\tn,\td]}{p}x)$ is a derivation which follows from ${[\tn,\td]}$ being a commutator of derivations. Finally, it is clear that our connection is integrable.

Also, quasi-isomorphism $r$ is compatible with connection because $\tn$ reduces to $\nabla^{can}$ modulo $p$. In other words, $\hicd F^*\At$ is quasi-isomorphic to $(\cF,\nabla_{\cF})$. Thus, $T^{cris}(\At)\cong L\hicp((\cF,\nabla_{\cF}))$. By the virtue of Lemma \ref{niceres}, $L\hicp(\cF,\nabla_{\cF})$ is quasi-isomorphic to $(i^*\cF,i^*\nabla_{\cF})$. The latter complex of $R$-modules with integrable connection is given by

\begin{equation}
\left(
\begin{matrix}
\nabla^{can} & 0 \\ (-1)^i\frac{[\tn,\td]}{p} & \nabla^{can}
\end{matrix}
\right):F^*A^i\oplus F^*A_{i+1}\to (F^*A_{i}\oplus F^*A_{i+1})\otimes \Omega^1_{R/k}
\end{equation} 

So, Theorem \ref{cris} will follow after we check that

\begin{lm}\begin{equation}\frac{[\tn,\td]}{p}=C^{-1}_{\tR,\tF}(\ks)
\end{equation}
\end{lm}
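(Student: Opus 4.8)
The plan is to unwind both sides of the asserted identity
$$\frac{[\tn,\td]}{p}=C^{-1}_{(\tR,\tF)}(\ks)$$
into explicit $\tR$-linear maps $\tF^*\tA_i\to \ti_*F^*A_{i+1}\otimes \Omega^1_{\tR/W_2(k)}$ and then to match them after reducing modulo $p$, since both sides visibly take values in $\ti_*F^*A_{\idot+1}\otimes\Omega^1_R$. First I would recall that $\ks=[\nabla',d]$ is the derivation representing the Kodaira-Spencer class, and that by the definition in \S\ref{s.s.r.f.l.m.} the inverse Cartier operator $C^{-1}_{(\tR,\tF)}$ sends a class $f\otimes\eta\in \End_R(A)\otimes\Omega^1_R$ to $F^*(f)\otimes \frac1p\tF^*\tilde\eta$ for a lifting $\tilde\eta\in\Omega^1_{\tR}$ of $\eta$. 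So on the right-hand side one is pulling back the endomorphism part by Frobenius and applying the operator $\eta\mapsto \frac1p\tF^*\tilde\eta$ to the form part.

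The heart of the computation is to make the left-hand side explicit using the formula for $\tn$ from Lemma \ref{canconlift}. Recall $\tn$ is built from a lift $\nabla'_0$ of the chosen connection $\nabla'$, by the rule $\tn(f\otimes x)=x\otimes df+f\cdot\tF^*(\nabla'_0(x))$ on $\tF^*B=\tR\otimes_{\tF,\tR}B$. I would compute $[\tn,\td](f\otimes x)=\tn(\td(f\otimes x))-\td(\tn(f\otimes x))$ directly. The $x\otimes df$ terms are annihilated by the commutator because $\td$ is $\tR$-linear through the Frobenius twist and the $df$ part is central, so what survives is $f\cdot\tF^*([\nabla'_0,\td_{\tA}](x))$, i.e.\ the Frobenius pullback of the lift of $\ks=[\nabla',d]$ acting on $\tA$. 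Dividing by $p$: since $[\nabla'_0,\td_{\tA}]$ is a lift over $\tR$ of $\ks$ and $\ks$ vanishes mod nothing but $[\tn,\td]$ itself is divisible by $p$ (as $\tn,\td$ commute mod $p$), the quotient $\frac1p[\tn,\td]$ comes out as $F^*(\ks_{\mathrm{end}})$ paired against the form $\frac1p\tF^*(\text{lift of the Kodaira-Spencer }1\text{-form})$—which is exactly the recipe for $C^{-1}_{(\tR,\tF)}$. The key point to verify carefully is that the Frobenius twist in $\tF^*$ produces precisely the factor $\frac1p\tF^*\tilde\eta$ on the differential-form component while leaving $F^*(f)$ on the endomorphism component, matching \eqref{inversecartieroperator}.

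The main obstacle I anticipate is bookkeeping the two independent Frobenius twists: the connection $\tn$ already involves $\tF^*$ on the module $\tA$, while $C^{-1}_{(\tR,\tF)}$ involves $\tF^*$ on the one-forms, and one must check these are compatibly the same $\tF$ so that the endomorphism $F^*([\nabla',d])=F^*(\ks)$ and the form-part $\frac1p\tF^*\tilde\eta$ separate cleanly. In particular I would check that the construction is independent of the auxiliary choices—of the lift $\nabla'_0$ (handled already in Lemma \ref{canconlift} via $\ti^*\tF=0$ on one-forms) and of the semi-free lift $\tA$—so that the resulting map represents the well-defined class $C^{-1}_{(\tR,\tF)}(\ks)$ and not merely a cocycle depending on presentations. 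Once the explicit formula for $\frac1p[\tn,\td]$ is in hand, comparison with the definition of $C^{-1}_{(\tR,\tF)}$ is a term-by-term identification, so the proof reduces to the single careful local computation of the commutator and its division by $p$.
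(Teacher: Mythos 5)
Your proposal is correct and follows essentially the same route as the paper: both recall the explicit formula $\tn(f\otimes x)=x\otimes df+f\cdot\tF^*(\nabla'_0(x))$ from Lemma \ref{canconlift}, compute the commutator $[\tn,\td]$ term by term so that the $df$-terms cancel, and identify the surviving term $f\otimes\frac{1}{p}\tF^*([\nabla',d])$ with $C^{-1}_{(\tR,\tF)}(\ks)$ directly from the definition (\ref{inversecartieroperator}). The independence-of-choices verification you flag is not needed here, since the lemma is invoked inside the proof of Theorem \ref{cris} with the lifts $\nabla'_0$ and $\tA$ already fixed, and the lift-independence of $\tn$ was settled in Lemma \ref{canconlift}.
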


\begin{proof}By definition $\ks=[\nabla',d]$.  Recall from the Lemma \ref{canconlift} that $\tilde{\nabla}_i$ on $\tF^*\tilde{A}_i$ is given by the formula $\tn_i(f\otimes x)=df\otimes x+f\otimes \tilde{F}^*(\nabla'_i(x))$. Hence,

\begin{equation}
\begin{split}\frac{[\tn,\td]}{p}(f\otimes x)=\frac{df\otimes \td(\tx)+f\otimes\tilde{F}^*(\nabla'_i(dx))-df\otimes \td(\tx)-f\otimes d\tF^*(\nabla'_i(x))}{p}=\\= f\otimes \frac{\tF^*([\nabla'_i,d])}{p}
\end{split}
\end{equation} which is exactly the inverse Cartier operator of the Kodaira-Spencer class by the definition (\ref{inversecartieroperator}).

\end{proof}

\begin{rem}
Of course, we could have computed $\cB^{cris}(\At)$ in one step using the resolution (\ref{flatres}) but we deal with non-liftability of $\At$ over $W(k)$ and non-existence of a connection on $\At$ separately for the sake of exposition. 
\end{rem}

\subsection{Proof of Theorem \ref{tate}}\label{pftate}

Choose a lifting $\hat{F}:\hR\to \hR$ of the Frobenius endomorphism such that $\hat{F}\otimes_{\hR}\tR=\tF$.

\begin{lm}\label{h1(W)}
Let $M$ be a flat $\hR$-module. For any $n\in \mathbb{Z}$ we have $\hat{H}^{2n-1}(C_p, M^{\otimes p})=0$ and $\hat{H}^{2n}(C_p,M^{\otimes p})$ is canonically isomorphic to $\ih_*F^*\ih^*M$,  where $C_p$, as usual, acts on $M^{\otimes p}$ by cyclic permutations.
\end{lm}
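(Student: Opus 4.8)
The plan is to compute the Tate cohomology $\hat{H}^*(C_p, M^{\otimes p})$ for a flat $\hR$-module $M$ by reducing to the classical computation of the Tate cohomology of $C_p$ with coefficients in a permutation-type module. First I would decompose $M^{\otimes p}$ as a $\bZ[C_p]$-module into the ``diagonal'' part and its complement. Writing $M = \bigoplus$ of free pieces locally (or working with $M$ flat, hence a filtered colimit of frees, and using that $T(C_p, -)$ commutes with direct sums), the tensor power $M^{\otimes p}$ splits, as a complex of $C_p$-modules, into a direct summand on which $C_p$ acts freely (the off-diagonal tensors $m_{i_1}\otimes\cdots\otimes m_{i_p}$ whose indices are not all equal, which get permuted freely in orbits of size $p$) and the ``small diagonal'' summand spanned by the tensors $m\otimes m\otimes\cdots\otimes m$. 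On the free summand the Tate cohomology vanishes, so only the diagonal contributes.

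The key step is to identify the diagonal contribution. The small diagonal of $M^{\otimes p}$, with its residual $C_p$-action (which is trivial on the diagonal tensors $m^{\otimes p}$ since cyclic permutation fixes them), is, after extending scalars along the Frobenius, exactly $\ih_*F^*\ih^*M$: this is where the Frobenius twist enters, because sending $m \mapsto m^{\otimes p}$ is additive only modulo $p$ and transforms the $\hR$-action by $x \mapsto \hat{F}(x)$, reproducing the pullback $F^*$ along the chosen Frobenius lift $\hat{F}$. Thus the diagonal summand contributes $\ih_*F^*\ih^*M \otimes \hat{H}^*(C_p, \bZ)$. Since $\hat{H}^{\mathrm{odd}}(C_p, \bZ) = 0$ and $\hat{H}^{\mathrm{even}}(C_p, \bZ) \iso \bZ/p$ (with the periodicity generator $u$ in degree $2$), one obtains the stated vanishing in odd degrees and the identification $\hat{H}^{2n}(C_p, M^{\otimes p}) \iso \ih_*F^*\ih^*M$ in even degrees, the periodicity being implemented by cup product with $u$.

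The main obstacle I expect is making the splitting of $M^{\otimes p}$ into free and diagonal parts both functorial and valid for an arbitrary flat $\hR$-module $M$, not merely a free one. For a free module the combinatorial decomposition of basis monomials into free $C_p$-orbits plus fixed diagonal monomials is transparent, but one must check the identification is canonical (independent of basis) and survives the passage to a general flat module. I would handle this either by exhibiting the diagonal summand intrinsically as the image of the norm/antisymmetrization idempotent structure available in Tate cohomology, or by writing a flat module as a filtered colimit of finite free modules and invoking that $T(C_p, -)$ commutes with direct sums (as noted in \S\ref{sstcc}) together with the exactness furnished by Lemma \ref{tateexact}; the flatness hypothesis is exactly what guarantees the tensor power behaves well and no $\Tor$ terms intrude. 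The verification that the resulting Frobenius-semilinear structure matches $\ih_*F^*\ih^*$ via the chosen lift $\hat{F}$ is then a direct unwinding of definitions.
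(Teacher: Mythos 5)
Your proposal is correct and takes essentially the same route as the paper's proof: the paper likewise reduces to finitely generated free modules via Lazard's theorem (a flat module is a filtered colimit of free ones), splits $M^{\otimes p}$ into the trivial diagonal summand spanned by the tensors $s^{\otimes p}$ of basis elements and an induced summand on which $C_p$ acts freely (hence with vanishing Tate cohomology), and extracts the Frobenius twist from the map $m \mapsto m^{\otimes p}$, using periodicity to handle all degrees. The only packaging difference is that the paper secures the canonicity you flag as the main obstacle up front, by first constructing the natural comparison map to $\hat{i}_* F^* \hat{i}^* M$ (via Lemma 6.9 of Kaledin's co-periodic cyclic homology paper together with adjunction) and only then verifying on free modules that it is a quasi-isomorphism -- which is precisely the intrinsic-map fix you sketch.
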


\begin{proof}The proof is similar to that of the Lemma 6.9 in \cite{cop}. By periodicity, it is enough to consider the case $n=0$. So, we should compute cohomology of the following canonical truncation of the Tate complex $$(M^{\otimes p})_{C_p}\xrightarrow{N} (M^{\otimes p})^{C_p}$$ Lemma 6.9 from \cite{cop} gives for any flat $R$-module $N$ a map $(N^{\otimes p})^{C_p}\to F^*N$. Composing this map for $N=\ih^*M$ with the inclusion $\ih^*(M^{\otimes p})^{C_p}\to (\ih^*M^{\otimes p})^{C_p}$ we get a map $\psi: \ih^*(M^{\otimes p})^{C_p}\to F^*\ih^*M$ which, by adjointness, gives a map of complexes $$[(M^{\otimes p})_{C_p}\to (M^{\otimes p})^{C_p}]\to \ih_*F^*\ih^*M$$ Since any flat module is a filtered colimit of free modules, it is enough to prove that this map is a quasi-isomorphism for finitely-generated free modules. Fixing a basis $S$ in a free module $M$, we get a decomposition of $C_p$-modules $$M^{\otimes p}=M_1\oplus M_2$$ where $M_1$ is generated by $s^{\otimes p}$ for $s\in S$ and $M_2$ is generated by all other tensors. So, $M_1$ is a trivial $C_p$-module, while $M_2$ is free and $\psi$ factors through projection on $M_1$.  So, to prove the lemma it is left to check that $\hat{H}^{-1}(C_p,\hR)=0,\hat{H}^{0}(C_p,\hR)=\ih_*R$. The standard Tate complex for trivial module $\hR$ takes the following form \begin{equation}\dots\xrightarrow{0}\hR\xrightarrow{p}\hR\xrightarrow{0}\dots\end{equation} So, $\hat{H}^{0}(C_p,\hR)=\hR/p\hR=\ih_*R,\hat{H}^{-1}(C_p, \hR)=0$ because multiplication by $p$ is injective on $\hR$.
\end{proof}

In what follows, for any DG algebra $\Bt$ we write $T(\Bt)$ for the DG algebra $T(C_p,\Bt^{\otimes p})$.

\begin{pr}\label{splcase}
A lifting $\hA_{\idot}$ of $\At$ over $\hR$ gives rise to a quasi-isomorphism of DG algebras
\begin{equation}
T_{[-1,0]}(\At)\cong L\ih^*\ih_*F^*\At
\end{equation}
\end{pr}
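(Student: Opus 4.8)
The plan is to use the integral lift $\hA$ to compute $T_{[-1,0]}(C_p,\At^{\otimes p})$ in two moves: first identify the \emph{integral} truncated Tate algebra $T_{[-1,0]}(C_p,\hA^{\otimes p})$ over $\hR$, and then descend to $R$ by reducing modulo $p$ along $\hR\to R$.

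\emph{Step 1 (the integral collapse).} I would first establish the integral analogue of (\ref{fitltdecgrv}): for $\hA$ term-wise flat over $\hR$,
$$\Gr^\tau_{i}T(C_p,\hA^{\otimes p})\iso \ih_*F^*\At \;\; (i\text{ even}),\qquad \Gr^\tau_{i}T(C_p,\hA^{\otimes p})=0 \;\; (i\text{ odd}).$$
By the d\'evissage of Lemma \ref{tateexact} this reduces to the case $\hA=M$, a flat $\hR$-module in degree $0$; then $M^{\otimes p}$ is concentrated in internal degree $0$, so the d\'ecalage of the $p$-rescaled stupid filtration coincides with the canonical truncation in the Tate degree and $\Gr^\tau_{n}\iso \vH^{n}(C_p,M^{\otimes p})[-n]$. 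Lemma \ref{h1(W)} then yields exactly the displayed values, the odd groups vanishing. In particular $\Gr^\tau_{-1}=0$, so the two-step object collapses and
$$T_{[-1,0]}(C_p,\hA^{\otimes p})=\tau^{dec}_{\le 0}/\tau^{dec}_{\le -2}\iso \Gr^\tau_{0}\iso \ih_*F^*\At$$
as DG algebras over $\hR$, the multiplication being the Frobenius-twisted one coming from the cup product on Tate cohomology (this is where the identification of Lemma \ref{h1(W)} must be used multiplicatively).

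\emph{Step 2 (reduction mod $p$).} Flatness of $\hA$ gives a short exact sequence of $C_p$-equivariant complexes
$$0\to \hA^{\otimes p}\xrightarrow{\,p\,}\hA^{\otimes p}\to \At^{\otimes p}\to 0,$$
using $\hA^{\otimes p}/p=(\hA\otimes_{\hR}R)^{\otimes p}\iso\At^{\otimes p}$. This sequence is strict for the $p$-rescaled stupid filtration (it is exact in each internal degree) and $T(C_p,-)$ is term-wise exact, so applying $T_{[-1,0]}$ produces a distinguished triangle
$$T_{[-1,0]}(C_p,\hA^{\otimes p})\xrightarrow{\,p\,}T_{[-1,0]}(C_p,\hA^{\otimes p})\to T_{[-1,0]}(C_p,\At^{\otimes p})\xrightarrow{+1}.$$
Its third term is thus the cone of multiplication by $p$, namely $T_{[-1,0]}(C_p,\hA^{\otimes p})\otimes^{L}_{\hR}R$. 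Substituting Step 1 and observing that the reduction map $\hA^{\otimes p}\to\At^{\otimes p}$ induces a morphism of DG algebras, I obtain
$$T_{[-1,0]}(C_p,\At^{\otimes p})\iso \ih_*F^*\At\otimes^{L}_{\hR}R=L\ih^*\ih_*F^*\At$$
as DG algebras, which is the assertion.

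\emph{Main obstacle.} The additive skeleton is handed to us by Lemma \ref{h1(W)}, so the genuine work is multiplicative bookkeeping: checking that the identification $T_{[-1,0]}(C_p,\hA^{\otimes p})\iso \ih_*F^*\At$ of Step 1 respects the DG-algebra structures (equivalently, that the cup product on $\vH^{0}$ is the Frobenius twist of the product on $\At$), and that the comparison of Step 2 is realized by algebra maps and not merely by quasi-isomorphisms of complexes. A secondary technical point is the exactness of the d\'ecalage truncation $T_{[-1,0]}$ on the strictly filtered sequence of Step 2, which I would settle using the functoriality of Deligne's construction together with the strictness of the $p$-rescaled stupid filtration under multiplication by $p$.
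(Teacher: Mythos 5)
Your proposal follows essentially the same route as the paper's own proof, which consists of exactly your two moves taken in the opposite order: the paper first asserts the base-change identification $T_{[-1,0]}(\At)\cong L\ih^*T_{[-1,0]}(\hA_{\idot})$ (your Step 2, dismissed there as ``by definition''), and then obtains the integral collapse $T_{[-1,-1]}(\hA_{\idot})=0$, $T_{[0,0]}(\hA_{\idot})=\ih_*F^*\At$ (your Step 1) by running the proof of Proposition 6.10 of \cite{cop} with Kaledin's Lemma 6.9 replaced by Lemma \ref{h1(W)}. So the key input, Lemma \ref{h1(W)}, and the collapse of $T_{[-1,0]}$ onto the degree-zero graded piece via vanishing of the odd piece are identical to the paper's.

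Two concrete criticisms. First, the d\'evissage you invoke in Step 1 is circular as stated: Lemma \ref{tateexact} concerns complexes of flat modules over $R$, and, more importantly, it is itself deduced from the graded-piece formula (\ref{fitltdecgrv}); you cannot appeal to (an integral analogue of) that lemma in order to reduce the integral graded-piece formula to flat modules in degree $0$. The reduction that actually works (and is what the cited proof of Kaledin's Proposition 6.10 does) analyzes the rescaled stupid filtration of $\hA^{\otimes p}$ directly: the off-diagonal summands of its associated graded are induced, hence free, $C_p$-modules, so they are Tate-acyclic, and only the diagonal pieces $\hA_j^{\otimes p}$ survive, to which Lemma \ref{h1(W)} applies. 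Second, what you call a ``secondary technical point'' in Step 2 is in fact the whole content of the base-change step: the d\'ecalage truncation does not commute with cones for formal reasons, and no filtered comparison is available, since the two filtrations genuinely differ --- the cone of $p$ on $\tau^{dec}_{\leq\idot}T(C_p,\hA^{\otimes p})$ has associated graded concentrated in even filtration degrees, while $\Gr^{\tau}_{i}T(C_p,\At^{\otimes p})\cong F^*\At[-i]$ is nonzero for every $i$. What strictness and $p$-torsion-freeness do give you is the equality $\tau^{dec}_{\leq n}T(C_p,\hA^{\otimes p})\cap pT(C_p,\hA^{\otimes p})=p\,\tau^{dec}_{\leq n}T(C_p,\hA^{\otimes p})$, which realizes your cone as a subcomplex of $\tau^{dec}_{\leq n}T(C_p,\At^{\otimes p})$; the remaining and genuinely nontrivial point is acyclicity of the quotient. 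This statement is true and is what the paper's phrase ``by definition'' silently rests on, but in a self-contained write-up it requires a proof, not a parenthetical remark.
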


\begin{proof}
By definition, $T_{[-1,0]}(\At)\cong L\ih^*T_{[-1,0]}(\hA_{\idot})$. Replacing in the proof of Proposition 6.10 from \cite{cop} their Lemma 6.9 by our \ref{h1(W)} we get that $T_{[-1,-1]}(\hA_{\idot})=0,T_{[0,0]}(\hA_{\idot})=\ih_*F^*\ih^*\hA_{\idot}=\ih_*F^*\At$. The vanishing of $T_{[-1,-1]}$ implies that $T_{[-1,0]}T(\hA_{\idot})\to T_{[0,0]}T(\hA_{\idot})\cong \ih_*F^*\At$ is an isomoprhism. Applying $L\ih^*$ we get the statement.

\end{proof}

\newcommand{\hF}{\hat{F}}

For a liftable $\At$, the above proposition can be reformulated as $$T_{[-1,0]}(\At)\cong F^*\At\oplus F^*\At[1]$$
 because $\hF^*\hA_{\idot}$ is a lifting of $F^*\At$ which splits $L\ih^*\ih_*F^*\At$ by Theorem \ref{cris}.

Next, if $\At$ is arbitrary, apply the proposition to $L\ih^*\ih_*\At$ letting $\hA_{\idot}$ be a semi-free resolution of $\ih_*\At$. We get \begin{equation}\label{dec}T_{[-1,0]}(L\ih^*\ih_*\At)\cong L\ih^*\ih_*F^*\At\oplus L\ih^*\ih_*F^*\At[1]\end{equation} Consider the morphism $L\ih^*\ih_*\At\to \ih^*\ih_*\At=\At$. It induces the map $T(L\ih^*\ih_*\At)\to T(\At)$. So we get the following diagram 
$$
\begin{tikzcd}
L\ih^*\ih_*F^*\At\arrow[r] & L\ih^*\ih_*F^*\At\oplus L\ih^*\ih_*F^*\At[1]\cong T_{[-1,0]}(L\ih^*\ih_*\At) \arrow[r] & T_{[-1,0]}(\At)
\end{tikzcd}
$$

Denote the composition by $\varphi$. Note that, by construction, $\varphi$ is a morphism of DG algebras. First,

\begin{lm}\label{qis}$\varphi:L\ih^*\ih_*F^*\At\to T_{[-1,0]}(\At)$ is a quasi-isomorphism of complexes of $R$-modules.
\end{lm}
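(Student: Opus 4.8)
The plan is to realize $\varphi$ as a morphism of filtered complexes for a two-step filtration on each side, and to reduce the claim to an isomorphism on the associated graded. On the target I would use the d\'ecalage filtration $\tau^{dec}_{\le\cdot}$: by Lemma \ref{tateexact} and (\ref{fitltdecgrv}) it is exhaustive with only two nonzero steps, $\Gr^{\tau}_0 T_{[-1,0]}(\At)\cong F^*\At$ and $\Gr^{\tau}_{-1}T_{[-1,0]}(\At)\cong F^*\At[1]$. On the source I would write $\cA:=L\ih^*\ih_*\At\cong\At[\mu]$ (with $\deg\mu=-1$, $\mu^2=0$), so that $L\ih^*\ih_*F^*\At\cong F^*\cA=F^*\At\oplus F^*\At\,\mu$, and filter it by the $\mu$-adic (square-zero ideal) filtration, with graded pieces $F^*\At$ and $F^*\At\,\mu\cong F^*\At[1]$. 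Since both filtrations are finite and exhaustive, it then suffices to check that $\varphi$ is filtered and induces isomorphisms on $\Gr$.

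First I would treat the top-graded piece. Recall that $\varphi=T_{[-1,0]}(\varepsilon)\circ j$, where $\varepsilon:\cA\to\At$ is the augmentation $\mu\mapsto 0$ and $j$ is the inclusion of $F^*\cA=\Gr^{\tau}_0 T_{[-1,0]}(\cA)$ coming from the splitting (\ref{dec}). Since $T_{[-1,0]}$ preserves $\tau^{dec}$, the map $T_{[-1,0]}(\varepsilon)$ is filtered, and $j$ sections the projection onto $\Gr^{\tau}_0$; hence the top-graded component of $\varphi$ equals $F^*\varepsilon:F^*\cA\to F^*\At$. Because $\varepsilon$ restricts to the identity on $\At\subset\At[\mu]$ and annihilates $\mu$, this simultaneously shows that $\varphi$ carries the ideal $F^*\At\,\mu$ into $\tau^{dec}_{\le-1}T_{[-1,0]}(\At)$ (so $\varphi$ is genuinely filtered) and that $\Gr^0\varphi=\id_{F^*\At}$.

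The crux, which I expect to be the main obstacle, is the bottom-graded piece: I must show $\Gr^1\varphi:F^*\At\,\mu=F^*\At[1]\to\Gr^{\tau}_{-1}T_{[-1,0]}(\At)=F^*\At[1]$ is an isomorphism. Here I would use that $\varphi$ is a morphism of DG algebras together with $\Gr^0\varphi=\id$: multiplicativity forces $\Gr^1\varphi(a\mu)=a\cdot\overline{\varphi(\mu)}$, so $\Gr^1\varphi$ is multiplication by the class $\overline{\varphi(\mu)}\in\Gr^{\tau}_{-1}T_{[-1,0]}(\At)$. As $\Gr^{\tau}_{-1}T_{[-1,0]}(\At)\cong F^*\At\otimes_{\bF_p}\vH^{-1}(C_p,\bF_p)$ is free of rank one over $F^*\At$, it is enough to prove that $\varphi(\mu)$ is a generator, i.e. that $\overline{\varphi(\mu)}$ is the Tate periodicity class up to a unit. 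By naturality in $\At$ (or, concretely, by applying Proposition \ref{splcase} to $\cA$ itself) this would reduce to the universal case $\At=R$, where $F^*R=R$ and $\varphi_R:R[\mu]\to T_{[-1,0]}(R)$ sends $\mu$ to the class $u^{-1}\epsilon$ generating $\vH^{-1}(C_p,R^{\otimes p})$; its non-vanishing is exactly the content of the Tate cohomology computation behind Lemma \ref{h1(W)}. This identification of $\varphi(\mu)$ with the nonzero Tate class — which records the non-splitting of the extension $F^*\At[1]\to\cB(\At)\to F^*\At$ — is the delicate point. Granting it, $\varphi$ is an isomorphism on both graded pieces of a finite filtration, hence a quasi-isomorphism.
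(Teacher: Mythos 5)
Your structural reductions are sound: filtering both sides (the $\mu$-adic filtration on $F^*\At[\mu]$, the filtration $\tau^{dec}$ on $T_{[-1,0]}(\At)$), identifying $\Gr^0\varphi=\id$ via the commutative square with the projection to $F^*\At$, using multiplicativity of $\varphi$ and of the filtration to reduce $\Gr^{-1}\varphi$ to the single class $\overline{\varphi(\mu)}$, and invoking naturality along the unit $R\to\At$ to reduce to $\At=R$ — all of this is correct and in fact streamlines the soft part of the paper's argument. But the final step is a genuine gap, and it is exactly where the entire content of the lemma sits. You claim that the non-vanishing (indeed, generation) of $\overline{\varphi_R(\mu)}$ in $\vH^{-1}(C_p,R^{\otimes p})$ "is exactly the content of the Tate cohomology computation behind Lemma \ref{h1(W)}." It is not. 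Lemma \ref{h1(W)} is a computation of the \emph{groups} $\vH^{*}(C_p,M^{\otimes p})$ for $M$ flat over $\hat R$ (odd vanishing over $W(k)$, and the identification of the even part); it is what makes the splitting of Proposition \ref{splcase} exist, but it says nothing about where the specific map $\varphi_R$ — whose definition requires \emph{inverting} the quasi-isomorphism $T_{[-1,0]}(\hA_\idot)\to T_{[0,0]}(\hA_\idot)$ of Proposition \ref{splcase} and then pushing forward along the augmentation $\At[\mu]\to\At$ — sends the class $\mu$. Computing that image is a separate, nontrivial task (essentially the identification of the Tate differential with a Bockstein, the theme of \cite{Bokst}), and no argument for it is given.

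The paper's own proof shows why this point cannot be disposed of by a group computation. Over a field $k'$ (and likewise for $\At=R$), both $L\hat i^*\hat i_*F^*k'$ and $T_{[-1,0]}(k')$ are \emph{non-canonically split}, quasi-isomorphic to $k'\oplus k'[1]$; for any single module the four-term extension
$$0\to F^*V\to (V^{\otimes p})_{C_p}\xrightarrow{N} (V^{\otimes p})^{C_p}\to F^*V\to 0$$
splits, so nothing intrinsic to one object can force $H^{-1}(\varphi)\neq 0$. The non-triviality is \emph{functorial}: the paper argues by contradiction that if $H^{-1}(\varphi)=0$ then, by compatibility of $\varphi$ with direct sums, one would obtain a splitting of the above sequence as an extension of polynomial functors $Vect_{k'}\to Vect_{k'}$, and this is impossible by Corollary 4.7 and Lemma 4.12 of \cite{fs} (Friedlander--Suslin). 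This functorial input — or, alternatively, an explicit cocycle-level computation of $\varphi_R(\mu)$ unwinding Proposition \ref{splcase} in the Tate complex of $(\hat R[\mu])^{\otimes p}$ — is indispensable, and your proposal contains neither. Until you supply it, the "delicate point" you flag remains unproven, and with it the lemma.
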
 

\begin{proof}

We may use the resolutions from the proof of Lemma \ref{niceres} to compute $L\ih^*\ih_*F^*\At$. Since functors $\ih^*,\ih_*,F^*,\At\mapsto \cF$ commute with filtered colimits, we may assume that $\At$ is a perfect complex (any complex is a filtered colimit of perfect complexes). Next, it is enough to check that  $\varphi $ is a quasi-isomorphism over all the localizations $R_{\mathfrak{m}}$ at maximal ideals $\mathfrak{m}\subset R$. Finally, by Nakayama lemma, it is enough to verify the statement over residue fields $R/\mathfrak{m}$.

Note that for any $\At$, the following square is commutative 

$$
\begin{tikzcd}
L\ih^*\ih_*F^*\At\arrow[r, "\varphi"]\arrow[d] & T_{[-1,0]}(\At)\arrow[d]\\
F^*\At\arrow[r, equal] & F^*\At
\end{tikzcd}
$$

Since any vector space is a filtered colimit of finite dimensional ones and finite dimensional vector spaces are finite direct sums of one-dimensional space, it is enough to prove the statement for $k'=R/\mathfrak{m}$. $L\ih^*\ih_*F^*k'$ and $T_{[-1,0]}(k')$ are both non-canonically split, i.e. quasi-isomorphic to $k'\oplus k'[1]$ and $\varphi$ induces an isomorphism on zeroth cohomology. We should prove that it is also an isomorphism on $(-1)$-st cohomology. Assume it is not, i.e. is zero on $H^{-1}$. Then $\varphi$ factors through $L\ih^*\ih_*F^*k'\to F^*k'$ so induces a splitting of $T_{[-1,0]}(k')$. Since, $\varphi$ is compatible with direct sums, $T_{[-1,0]}(V)$ is also canonically split for any $k'$-vector space $V$. In other words, the following extension of polynomial functors $Vect_{k'}\to Vect_{k'}$ is split $$0\to F^*V\to (V^{\otimes p})_{C_p}\to (V^{\otimes p})^{C_p}\to F^*V\to 0$$

This extension is equivalent to a similar one with $C_p$ replaced by the symmetric group $S_p$

$$
\begin{tikzcd}
0\arrow[r] & F^*V\arrow[r]\arrow[d, equal] & (V^{\otimes p})_{C_p}\arrow[d, "\pi_p"]\arrow[r,"N_{C_p}"] & (V^{\otimes p})^{C_p}\arrow[d, "av_p"]\arrow[r] & F^*V\arrow[d, equal]\arrow[r] & 0\\
0\arrow[r] & F^*V\arrow[r] & (V^{\otimes p})_{S_p}\arrow[r, "N_{S_p}"] & (V^{\otimes p})^{S_p}\arrow[r] & F^*V\arrow[r] & 0
\end{tikzcd}
$$

Here $\pi_p$ is the projection and $av_p$ is the averaging over left cosets of $C_p\subset S_p$ that is $av_p(x)=\frac{1}{(p-1)!}\sum\limits_{gC_p\in S_p/C_p}g(x)$(note that this does not depend on the choice of representatives of cosets). From Corollary 4.7($r=j=1$) and Lemma 4.12 from \cite{fs} follows that the latter extension is non-split. Hence, $\varphi$ must induce an isomorphism on (-1)-st cohomology so it is a quasi-isomorphism for any $\At$.
\end{proof}

We have constructed a map $\varphi:\cB^{cris}(A)\to\cB(\At)$ of DG algebras over $R$. To finish the proof of the theorem we need to prove that

\begin{lm} $\varphi$ is compatible with the connection in the sense that it is a morphism in the category of complexes with connection localized with respect to quasi-isomorphisms.
\end{lm}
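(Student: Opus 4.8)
The plan is to factor $\varphi$ and check horizontality of each factor separately. Recall that, by construction, $\varphi$ is the composite
$$\cB^{cris}(\At)=L\hicp\hicd F^*\At\xrightarrow{\;j\;}T_{[-1,0]}(L\hi^*\hi_*\At)\xrightarrow{\;T(\epsilon)\;}T_{[-1,0]}(\At)=\cB(\At),$$
where $\epsilon\colon L\hi^*\hi_*\At\to\At$ is the counit of the adjunction (realised by a termwise flat model) and $j$ is the inclusion of the first summand in the splitting (\ref{dec}) furnished by Proposition \ref{splcase}. The source carries the crystalline connection, whereas both Tate complexes carry the connection of \S\ref{ConT}; so it suffices to show that each of the two arrows is a morphism in the localised category of complexes with connection.

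For $T(\epsilon)$ I would invoke the functoriality of the \S\ref{ConT} connection in the DG algebra argument. A morphism $f\colon\At\to\At'$ of termwise flat DG algebras over $R$ induces, over the first infinitesimal neighbourhood $X^{[2]}$, a morphism of the two exact sequences of \S\ref{ConT} respecting the filtrations $G^{\bullet}$ and the diagram (\ref{Tatecondef}); it therefore commutes with $m$ and with the filtered quasi-isomorphism $\pi$, hence with the section $s=m\pi^{-1}$, so that the induced map on $T_{[-1,0]}$ intertwines the two descent isomorphisms $\nabla$. Taking $f=\epsilon$ shows that $T(\epsilon)$ is horizontal; this step is routine.

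The substance is the horizontality of $j$, which is the liftable case: the algebra $L\hi^*\hi_*\At$ lifts to a semi-free resolution of $\hi_*\At$ over $\hR$, and the splitting (\ref{dec}) together with the identification of its first summand with $\cB^{cris}(\At)$ comes from Proposition \ref{splcase} and Lemma \ref{h1(W)}. I would compare the two connections through their common description as descent data on $X^{[2]}$: the crystalline connection on $L\hicp\hicd F^*\At$ is, by \cite{bo} Th.~6.6, the canonical crystal isomorphism $p_1^*\cong p_2^*$, and the \S\ref{ConT} connection is by definition also such an isomorphism. Thus it is enough to check that the isomorphism of Proposition \ref{splcase} is compatible with the two pullbacks $p_1^*,p_2^*$ to $X^{[2]}$. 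This reduces to the naturality of Lemma \ref{h1(W)} over the thickening: the canonical isomorphism $\vH^{2n}(C_p,M^{\otimes p})\cong\hi_*F^*\hi^*M$ is natural in the flat $\hR$-module $M$, so it commutes with base change along $p_1$ and $p_2$, and the Frobenius pullback occurring in it kills the conormal of the diagonal — precisely the vanishing of $\ti^{*}\tF$ on $1$-forms exploited in Lemma \ref{canconlift}.

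The hard part will be exactly this last comparison: verifying that passing to the first infinitesimal neighbourhood commutes with the computation of the truncated Tate cohomology in Lemma \ref{h1(W)}, so that the identification of Proposition \ref{splcase} transports the canonical crystal structure on $\cB^{cris}(\At)$ to the descent isomorphism $s=m\pi^{-1}$. Once this naturality in the thickening direction is in place, horizontality of $\varphi$ follows formally by combining it with the functoriality step above, and Theorem \ref{tate} is proved.
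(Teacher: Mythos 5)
Your skeleton is the paper's own: the same factorization $\varphi=T(\epsilon)\circ j$, with $T(\epsilon)$ disposed of by functoriality of the \S\ref{ConT} connection in the DG algebra argument, and all the weight thrown onto the liftable case. (One secondary omission: for $j$ one must know that the splitting (\ref{dec}) is itself horizontal on the crystalline side; the paper gets this from Theorem \ref{cris}, since the Kodaira--Spencer class of $F^*\At$ vanishes. Your descent-data comparison would subsume this if carried out, but it should be said.)

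The genuine gap is your third step --- which you yourself label ``the hard part'' --- since it is exactly the content of the lemma, and your proposal does not prove it; moreover, the justification you sketch for it is a non sequitur. Naturality of the isomorphism $\vH^{2n}(C_p,M^{\otimes p})\cong \hi_*F^*\hi^*M$ in the flat $\hR$-module $M$ concerns morphisms of modules over a \emph{fixed} base; compatibility with base change along $p_1,p_2\colon X^{[2]}\to X$ (a ring map, which would moreover have to be lifted to $W(k)$ before Lemma \ref{h1(W)} can even be stated over the thickening) is a different assertion and does not follow formally from naturality in $M$. The paper closes precisely this step by a direct computation, avoiding any ``thickened'' version of Lemma \ref{h1(W)}: apply $T_{[0,0]}$ to the diagram (\ref{Tatecondef}) defining the connection; the kernel of $\pi\colon G^0/G^2\to(\hi_*\At)^{\otimes p}$ is a free complex of $C_p$-modules (Lemma \ref{quotients}), hence Tate-acyclic, so $\pi$ becomes an isomorphism after $T_{[0,0]}$; and since the endofunctor $p_{1*}p_2^*$ commutes with $T_{[0,0]}$, the section $s=m\pi^{-1}$ becomes the canonical descent datum --- i.e.\ the \S\ref{ConT} connection on $T_{[0,0]}(\hA_{\idot})\cong\hi_*F^*\At$ is the pushforward of the canonical connection; this is where ``Frobenius kills the conormal'' is actually cashed in. Combined with the observation that $T_{[-1,0]}(\hA_{\idot})\to T_{[0,0]}(\hA_{\idot})$ is a connection-compatible quasi-isomorphism (because $T_{[-1,-1]}(\hA_{\idot})=0$), this settles the liftable case. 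You would need to supply an argument of this kind; as written, your proposal stops where the proof should begin.
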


\begin{proof}
First, assume that lemma is proven for liftable DG algebras, in particular for $L\ih^*\ih_*\At$. Theorem \ref{cris} implies that embedding $L\ih^*\ih_*F^*\At\to L\ih^*\ih_*F^*L\ih^*\ih_*\At$ is compatible with connection because the Kodaira-Spencer class of $F^*\At$ vanishes. The morphism $T_{[-1,0]}(L\ih^*\ih_*\At)\to T_{[-1,0]}(\At)$ is also compatible with the connection because, by definition, connection on the Tate complex is functorial in the DG algebra. So, $\varphi$ is a composition of morphisms compatible with the connection.
 
So, we may assume that $\At$ has a lifting $\hA_{\idot}$ over $\hR$. We claim that $T_{[-1,0]}(\hA_{\idot})\to \ih_*F^*\At$ is compatible with connections where the truncated Tate complex carries the connection from \ref{ConT} and $\ih_*F^*\At$ is the direct image of the canonical connection. The map $T_{[-1,0]}(\hA_{\idot})\to T_{[0,0]}(\hA_{\idot})$ is obviously compatible, so we need to check that the isomorphism $T_{[0,0]}(\hA_{\idot})\cong \ih_*F^*\At$ is compatible. Applying $T_{[0,0]}$ to the diagram (\ref{Tatecondef}) used in the definition of connection, we get

\[
\begin{tikzcd}
 T_{[0,0]}(C_p,G^0/G^2(\ih_*p_{1*}p^*_2\At)^{\otimes p})\arrow[r,"\pi"]\arrow[d,"m"] & T_{[0,0]}(C_p,(\ih_*\At)^{\otimes p})\\
 T_{[0,0]}(C_p,p_{1*}p_2^*(\ih_*\At)^{\otimes p})
\end{tikzcd}
\]

By the proof of \ref{splcase}, $T_{[0,0]}(C_p,(\ih_*\At)^{\otimes p})=\ih_*F^*\At$ and, similarly, $\pi$ induces an isomorphism, because the kernel of $\pi:G^0/G^2((\ih_*p_{1*}p^*_2\At)^{\otimes p})\to (\ih_*\At)^{\otimes p}$ is a free complex of $C_p$-modules, thus has Tate cohomology complex quasi-isomorphic to zero. Finally, since $p_{1*}p^{*}_2$ commutes with $T_{[0,0]}$, we get 

\[
\begin{tikzcd}
 \ih_*F^*\At\arrow[r,"\Id"]\arrow[d,"m"] & \ih_*F^*\At\\
 p_{1*}p^*_2 \ih_*F^*\At
\end{tikzcd}
\]

So, indeed, $T_{[0,0]}(\hA)$ is isomorphic to the $i_*$ of the canonical connection on $F^*\At$.

\end{proof}

\section{The Gauss-Manin connection on the (co-)periodic cyclic homology}\label{sGMonHP}
In this section we review Getzler's and Kaledin's constructions of the Gauss-Manin connection, check that the two constructions agree, show that the Gauss-Manin connection preserves the conjugate filtration,  and prove   Theorem \ref{cBandCP}. 
\subsection{Getzler's construction.} 
Let $R$ be a smooth finitely generated commutative  algebra over a field $k$, and let $\At$ be  a semi-free differential graded algebra  over $R$ (\cite{dr}, \S 13.4). Denote by $(\Ct(\At, \At), b)$ the relative Hochschild chain complex of $\At$  over $R$
\footnote{Here  ``relative over $R$'' means that all the tensor products in the standard complex are taken over $R$.} and by $\CPt (\At)= (\Ct (\At, \At)((u)), b +uB)$ the periodic cyclic complex. Getzler defined in \cite{ge} a connection on $\CPt (\At)$ 
$$\nabla:  \CPt (\At) \to \CPt (\At) \otimes _R \Omega^1_R.$$
His construction can be explained as follows: choose
a connection $\nabla ':  \bigoplus A_i \to \bigoplus A_i \otimes \Omega^1_R$ on the graded algebra $\bigoplus A_i$ satisfying the Leibnitz rule with respect to the multiplication on $\bigoplus A_i$. Then the commutator 
$$\ks= [\nabla', d] \in \prod Hom_R (A_i, A_{i+1}) \otimes \Omega^1_R $$
with the differential $d$ on $\At$ 
commutes with $d$ and it is a $R$-linear derivation of $\At$  (with values in  $\At  \otimes \Omega^1_R $)  of degree $1$. \footnote{Denote by $Der_R^{\bullet}(\At)$ the DG Lie algebra of $R$-linear derivations 
of $\At$: $Der_R^{i}(\At)$ is the $R$-module of $R$-linear derivations of the graded algebra $\bigoplus A_i$; the differential on $Der_R^{\bullet}(\At)$ is given by the commutator with $d$. 
The cohomology class 
$\kappa \in H^1(Der_R^{\bullet}(\At))\otimes \Omega^1_R$ of $\ks$ does not depend on the choice of $\nabla'$.   (Indeed, any two connections differ by an element of $Der_R^{0}(\At)$.)  Recall that the Hochschild cochain complex of $\At$ is quasi-isomorphic to the cone of the map $\At \to Der_R^{\bullet}(\At)$ which takes an element of $A_i$ to the corresponding inner derivation. We refer to the image 
$\overline{\kappa}$ of $\kappa$ under the induced morphism  $H^1(Der_R^{\bullet}(\At))\otimes \Omega^1_R \to HH^2(\At, \At)\otimes \Omega^1_R$ as the Kodaira-Spencer class of $\At$. In \cite{ge}, Getzler chooses local coordinates $x_1, \cdots x_n$ on $\text {spec} R$. His notation ({\it loc. cit.}, \S3) for $\overline{\kappa}$ coupled with $\frac{d}{dx_i}$ is $\cA_i$.}
As a derivation, $\ks$   acts on  $\Ct(\At, \At)$ by the Lie derivative
$$\cL_{\ks}: \Ct(\At, \At) \to \Ct(\At, \At)  \otimes \Omega^1_R [1], \quad [\cL_{\ks}, B]=0$$ 
and the ``interior product'' operator 
$$e_{\ks}: \Ct(\At, \At) \to \Ct(\At, \At)  \otimes \Omega^1_R [2].$$
 The operators $\cL_{\ks}, e_{\ks}, B$ satisfy the Cartan formula up to homotopy:  there is a canonical operator 
 $$E_{\ks}: \Ct(\At, \At) \to \Ct(\At, \At)  \otimes \Omega^1_R, \quad [E_{\ks}, B]=0$$
 such that $[e_{\ks}, B]= \cL_{\ks} - [E_{\ks}, b]$ ( \cite{l}, \S 4.1.8).
One defines
\begin{equation}\label{GCon} \nabla : = \nabla' - u^{-1} \iota_{\ks},\end{equation}
where the first summand  is the connection on $\bigoplus CP_i (\At)$ induced the connection $\nabla '$ on $ \bigoplus A_i $ and 
 $\iota_{\ks}: \bigoplus CP_i (\At) \to \bigoplus CP_i (\At) \otimes \Omega^1_R$ is an $R((u))$ linear map given by the formula  $\iota_{\ks}=e_{\ks} + u E_{\ks}$. By construction, $ \nabla$ commutes with $b +uB$. Thus, it induces a connection on $\CPt (\At)$. 
 Getzler  showed that up to homotopy $\nabla$ does not depend on the choice of $\nabla '$.\footnote{One can rephrase the above construction to make this fact obvious: let $Der_k^{\bullet}(R\to \At)$ be the DG Lie algebra of $k$-linear derivations which take the subalgebra   $R \subset A_0$ to itself. Then
 $Der_R^{\bullet}(\At)$ is a Lie ideal in  $Der_k^{\bullet}(R\to \At)$. Denote by $\widetilde{Der_k(R)}$ the cone of the morphism $Der_R^{\bullet}(\At)\to Der_k^{\bullet}(R\to \At)$. The restriction morphism
  $\widetilde{Der_k(R)}\to Der_k(R)$  a homotopy equivalence of DG Lie algebras: a choice of $\nabla'$ as above yields a homotopy inverse map. 
 Next, we have
 a canonical morphism of complexes $\widetilde{Der_k(R)} \otimes _R  \CPt (\At) \to \CPt (\At) $ given by the formulas $\theta \otimes c \mapsto  u^{-1} \iota_{\theta} (c)$,  for $\theta \in  Der_R^{\bullet}(\At)$, and  
 $\zeta \otimes c \mapsto  \cL_{\zeta}( c)$, for  $\zeta \in  Der_R^{\bullet}(R\to \At)$. This yields a morphism  $ Der_k(R) \otimes _R  \CPt (\At) \to \CPt (\At) $  well defined up to homotopy.} 
 He also proved that the induced connection on $HP_{\bullet}(\At)$ is flat. However, we do not know how to make $\nabla$ on  $ \CPt (\At)$ flat up to coherent homotopies\footnote{The problem is that, in general, the canonical morphism $\widetilde{Der_k(R)} \otimes _R  \CPt (\At) \to \CPt (\At) $ is not a Lie algebra action.}.

By construction, the connection $\nabla$  satisfies the Griffiths transversality property with respect to the Hodge filtration 
 $\cF^i\CPt (\At):= (u^i \Ct (\At, \At)[[u]], b +uB)$:
  $$\nabla:  \cF^i\CPt (\At) \to \cF^{i-1}\CPt (\At) \otimes _R \Omega^1_R.$$
 Thus, $\nabla$ induces a degree one $R$-linear morphism of graded complexes 
 $$Gr^\cF {\nabla}:  Gr^\cF \CPt \to Gr^\cF \CPt \otimes _R \Omega^1_R.$$
 Abusing terminology,  we refer to  $Gr^\cF {\nabla}$ as the Kodaira-Spencer operator.  Under the identification $Gr^\cF \CPt= (\Ct (\At, \At)((u)), b)$ the Kodaira-Spencer operator is given by the formula
 $$Gr^\cF {\nabla}= u^{-1} e_{\ks}.$$
\newcommand{\jta}{p_{1*}p^*_2\At}

\subsection{Kaledin's definition.} Following (\cite{ka1}, \S 3), we extend the argument from \S \ref{ConT} to give another definition of the Gauss-Manin connection which will be used in our proofs. Consider a two-term filtration on $p_{1*}p^*_2\At$ given as $I^0=\jta, I^1=\At\otimes\Omega^1_X,I^2=0$. Note that $I^0/I^1=\At$. Taking  tensor powers of the filtered complex $p_{1*}p^*_2\At$, we obtain a filtration on the cyclic object 
$\ja^{\#}$. This gives rise to a filtration $I^i$ on the periodic cyclic complex of $\jta$ such that $I^0\CPt\ja/I^1\CPt\ja=\CPt(\At)$. So we get a diagram with the upper row being a distinguished triangle 

\[\label{CPFil}
\begin{tikzcd}
 I^1/I^2\arrow[r,"i"] & I^0/I^2\arrow[r,"\pi"]\arrow[d,"m"] & \CPt(\At)\arrow[r]&{}\\
 & p_{1*}p_2^*\CPt(\At)
\end{tikzcd}
\]

\begin{lm}

$I^1\CPt\ja/I^2\CPt\ja$ is contractible.

\end{lm}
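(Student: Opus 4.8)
The plan is to make the quotient explicit and then contract it. By construction of the tensor-product filtration on the cyclic object $\ja^{\#}$, the quotient $I^1\CPt\ja/I^2\CPt\ja$ is the ``linear in $\Omega^1_X$'' summand of the periodic cyclic complex of the square-zero extension $\Gr^{I}\ja=\At\ltimes(\At\otimes\Omega^1_X)$: in Hochschild degree $n$ it is $\bigoplus_{j=0}^{n}$ the subspace in which exactly one tensor slot carries a factor from $\At\otimes\Omega^1_X$ while the remaining $n$ slots lie in $\At$, equipped with the induced differential $b+uB$. Thus the statement is precisely the first-order (weight one) case of Goodwillie's theorem on the nil-invariance of periodic cyclic homology, applied to the square-zero thickening $\ja\to\At$; the point is that here one needs only the weight-one piece, which---unlike the higher-weight pieces that can survive in characteristic $p$---is always contractible.

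First I would identify the weight-one mixed complex. Using the shuffle (K\"unneth) quasi-isomorphism of mixed complexes $\Ct(\At\otimes_R D)\simeq \Ct(\At)\otimes_R \Ct(D)$ for $D=R\oplus\Omega^1_X$ the trivial square-zero $R$-algebra, the linear-in-$\Omega^1_X$ part becomes $\Ct(\At)\otimes_R$(linear part of $\Ct(D)$). More intrinsically, for a trivial extension $\At\ltimes M$ with $M=\At\otimes\Omega^1_X$ the weight-one part of the Hochschild complex is naturally quasi-isomorphic to $\Ct(\At,M)\oplus \Ct(\At,M)[1]$, where $\Ct(\At,M)$ is the Hochschild complex of $\At$ with coefficients in $M$, and Connes' operator $B$ induces the degree-shifting isomorphism identifying the two summands. (Concretely, for dual numbers $B$ carries the class of $\epsilon$ in degree $0$ to the class of $1\otimes\epsilon$ in degree $1$; these computations involve only a single marked slot and are therefore insensitive to the characteristic.)

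It then remains to contract $b+uB$ on this piece. Since $u$ is invertible in $\CPt=\Ct((u))$ and $B$ is, up to $b$-homotopy, an isomorphism between the two summands, the total differential $b+uB$ becomes invertible: applying the homological perturbation lemma to the $b$-contraction, perturbed by $u^{-1}$ times the inverse of $B$, produces an explicit $R((u))$-linear contracting homotopy, which is exactly the null-homotopy of the identity claimed (a statement stronger than mere acyclicity). The main obstacle is precisely that one must contract for the \emph{total} differential $b+uB$ rather than for $b$ or $B$ separately: a naive $b$-homotopy leaves behind the $uB$-coupling between the two summands, so the essential input is the isomorphism property of $B$ on weight-one homology together with the invertibility of $u$. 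This is the cyclic-homology analogue of the earlier vanishing in the Tate setting, where the single $\Omega^1_X$-slot produced a free $\bZ[C_p]$-module and hence a Tate-acyclic complex; here Connes' operator $B$ plays the role of the norm map.
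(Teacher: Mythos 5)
Your description of the quotient is correct (in simplicial degree $n$ it is the sum of the $n+1$ subspaces with exactly one slot in $\At\otimes\Omega^1_X$), and so is your homology-level picture: the weight-one Hochschild homology is $HH_{\idot}(\At,M)\oplus HH_{\idot-1}(\At,M)$ with $M=\At\otimes\Omega^1_X$, and $B$ carries the first summand isomorphically onto the second and kills the second. The genuine gap is in the contraction step. The homological perturbation lemma needs as input a deformation retract of the weight-one complex onto its $b$-homology, and over the base ring $R$ of the lemma this need not exist: $HH_{\idot}(\At,M)$ need not be a projective $R$-module and the complex need not split, since the lemma must hold for an arbitrary semi-free DG algebra over an arbitrary smooth affine $R$ (already for the Koszul resolution of $R/xR$ one gets torsion homology such as $(R/xR)\otimes\Omega^1_R$). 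Relatedly, ``$u^{-1}$ times the inverse of $B$'' is not an operator on the complex: $B$ is not invertible there; it only becomes an isomorphism after passing to homology, from the marked-slot-at-zero subcomplex $\Ct(\At,M)$ to the quotient by it, and that two-step filtration is not even preserved by $B$, which rotates the marked slot through all positions. So the homotopy you want to feed into the perturbation series exists only after the transfer to homology, which is exactly the step that fails over a general base. (Your argument does go through when $R$ is a field; also note that the shuffle map you invoke commutes with $b$ but with $B$ only up to the cyclic-shuffle homotopy, so the K\"unneth identification of mixed complexes needs extra care.)

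The fix is your own closing sentence, taken seriously: because the cyclic groups permute the position of the marked slot simply transitively, the cyclic object $I^1/I^2$ is \emph{free} --- in degree $n$ it is the $\bZ[\bZ/(n+1)]$-module induced from the marked-at-slot-zero part --- and this is precisely the paper's proof, quoting Kaledin [K1, \S 3]: the cyclic object is freely generated by $\At^{\#}\otimes\Omega^1$, hence its periodic cyclic complex is contractible. Concretely, for a free cyclic object every row
$$\cdots\xrightarrow{N}E_n\xrightarrow{1-t}E_n\xrightarrow{N}\cdots$$
of the periodic $(b,b',1-t,N)$-bicomplex is contractible with a canonical homotopy (exactly as the Tate complex of a free $\bZ[C_p]$-module is), and the product-total complex inherits an explicit contraction because the resulting perturbation series contributes only finitely many terms in each bidegree. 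This requires no splitting, no projectivity of homology, works over any base, and yields contractibility --- not merely acyclicity --- directly.
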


\begin{proof} By \cite{ka1} \S 3, the cyclic object $I^1\ja^{\#}/I^2\ja^{\#}$ is freely generated by $A^{\#}\otimes \Omega^1$ so its periodic cyclic complex is contractible. 
\end{proof}

Hence, $\pi$ is a quasi-isomorphism, and the connection is defined as $\nabla=m\pi^{-1}:\CPt(\At)\to p_{1*}p^*_2\CPt(\At)$

\begin{pr}\label{GeqK}Kaledin's connection is equal to Getzler's connection as a morphism $\CPt(\At)  \to p_{1*}p^*_2\CPt(\At)$ in the derived category.
\end{pr}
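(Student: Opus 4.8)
The plan is to unwind Kaledin's definition $\nabla=m\pi^{-1}$ with the help of the very connection $\nabla'$ that enters Getzler's formula (\ref{GCon}), and to check that the resulting section of $\alpha'\colon p_{1*}p^*_2\CPt(\At)\to\CPt(\At)$ agrees, up to homotopy, with $\nabla'-u^{-1}\iota_{\ks}$. This suffices: a connection in this setting is precisely a section of $\alpha'$ that restricts to the identity on the diagonal (one checks $\alpha'\circ m=\pi$, so $\alpha'\circ m\pi^{-1}=\id$), and two such sections differ by a chain map $\CPt(\At)\to\CPt(\At)\otimes_R\Omega^1_R$; so it is enough to produce one explicit homotopy inverse $\pi^{-1}$ built from $\nabla'$ and compute $m\pi^{-1}$ term by term. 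The whole argument parallels the construction of the connection on the Tate complex in \S\ref{ConT}.

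First I would fix $\nabla'$ on $\bigoplus A_i$ and record the resulting multiplicative splitting $s\colon\At\to p_{1*}p^*_2\At$ of $\alpha$; its failure to be a chain map is exactly $[d,s]=\beta\circ\ks$, where $\ks=[\nabla',d]$ is the Kodaira--Spencer derivation. Passing to tensor powers and the periodic cyclic complex, $s$ induces a graded section $s^{\#}$ of $\pi\colon I^0/I^2\to\CPt(\At)$, whose defect $[b+uB,\,s^{\#}]$ lands in the contractible piece $I^1/I^2$ and is expressed through the action of $\ks$ on Hochschild chains by the Lie derivative $\cL_{\ks}$ and the interior product $e_{\ks}$. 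Since $I^1/I^2$ is the periodic cyclic complex of a free cyclic module (the preceding Lemma), it carries an explicit contracting homotopy $h$, and homological perturbation gives the chain-level section $\pi^{-1}$ with leading correction $-h\circ[b+uB,\,s^{\#}]$. Crucially, because $p_{1*}p^*_2$ sees only the first infinitesimal neighborhood, i.e.\ is linear in $\Omega^1_R$, only this leading correction survives after applying $m$, so no higher perturbation terms intervene.

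Next I would compose with the multiplication map $m$ and read off the two contributions. The principal term $m\circ s^{\#}$ is, by construction, the naive parallel transport attached to $\nabla'$, i.e.\ the first summand in (\ref{GCon}). The correction $-m\circ h\circ[b+uB,\,s^{\#}]$ must be identified with $-u^{-1}\iota_{\ks}=-u^{-1}(e_{\ks}+uE_{\ks})$: the homotopy $h$ inverts the $uB$-part of the differential on the free cyclic module, which is the source of the factor $u^{-1}$, while the Cartan homotopy $E_{\ks}$ appears as the $B$-component of the perturbation. The upshot is $m\pi^{-1}=\nabla'-u^{-1}\iota_{\ks}$ in the derived category, which is Getzler's connection.

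I expect the main obstacle to be exactly this last matching: verifying that Kaledin's abstract contraction of the one-$\Omega^1$-slot cyclic module $I^1/I^2$ reproduces Getzler's Cartan calculus on the nose, i.e.\ that the $B$-part of the perturbation homotopy realizes the operator $E_{\ks}$ satisfying $[e_{\ks},B]=\cL_{\ks}-[E_{\ks},b]$, with the correct power of $u$ and the correct signs. This is a bookkeeping computation in the cyclic bicomplex with its $(b,B,u)$-structure; conceptually it is the assertion that contracting the free cyclic generator is implemented by the homotopy underlying the mixed-complex Cartan formula, so that the two definitions of $\nabla$ differ only by a homotopy and hence coincide in the derived category.
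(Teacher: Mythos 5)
Your first half coincides with the paper's proof: fix $\nabla'$, let $\varphi$ (your $s^{\#}$) be the induced graded section of $\pi$, and observe that its defect $[\varphi,b+uB]=\cL_{\ks}$ is a chain map of degree one with values in the contractible subcomplex $I^1/I^2$. Your reduction is also sound as far as it goes: since $\ker\pi=I^1/I^2$ is contractible, any two chain-level sections of $\pi$ are chain homotopic, so $m\circ\sigma$ computes $m\pi^{-1}$ in the derived category for \emph{any} chain-level section $\sigma$. (Be careful, though, that this uniqueness is special to sections of $\pi$: the kernel of $\alpha'$ is $\CPt(\At)\otimes_R\Omega^1_R$, which is not contractible, so your opening remark that two sections of $\alpha'$ "differ by a chain map $\CPt(\At)\to\CPt(\At)\otimes_R\Omega^1_R$" does not by itself allow you to compare connections -- distinct connections differ in exactly this way.)

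The gap is in the second half, and it is the crux of the proposition. You construct a chain-level section $\sigma'=s^{\#}-h\,[b+uB,s^{\#}]$ using an abstract contraction $h$ of $I^1/I^2$, and then assert that $-m\,h\,[b+uB,s^{\#}]$ "must be identified with" $-u^{-1}\iota_{\ks}$, deferring this to a bookkeeping computation that you do not carry out. As stated the target is not even well-posed: the correction $h\,\cL_{\ks}$ depends on the choice of $h$, so it cannot agree with $u^{-1}\iota_{\ks}$ "on the nose" for an arbitrary contraction; only its homotopy class is canonical. The paper bypasses the contraction and the perturbation entirely by invoking Loday's Cartan homotopy formula (\cite{l}, \S 4.1.11): $[u^{-1}\iota_{\ks},b+uB]=\cL_{\ks}$, where $\iota_{\ks}=e_{\ks}+uE_{\ks}$ takes values in $I^1/I^2$. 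This formula says that $\varphi-u^{-1}\iota_{\ks}$ is \emph{already} a chain-level section of $\pi$, and applying $m$ to it yields Getzler's formula (\ref{GCon}) verbatim, finishing the proof. Your gap can be closed by the same citation: granted Loday's formula, both $h\,\cL_{\ks}$ and $u^{-1}\iota_{\ks}$ are degree-zero primitives of $\cL_{\ks}$ under $[-,b+uB]$ with values in $I^1/I^2$, so their difference is a chain map into a contractible complex, hence null-homotopic, giving $m\sigma'\simeq m(\varphi-u^{-1}\iota_{\ks})$. But note that once you invoke that formula the scaffolding of $h$ and the perturbation becomes superfluous (you have reproduced the paper's argument), whereas without it your "bookkeeping computation" amounts to re-proving the Cartan homotopy formula itself -- which is precisely the nontrivial input here, not a routine verification.
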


\begin{proof}

We will show that Getzler's formula comes from a section of $\pi$ on the level of complexes. 



$\nabla'$ gives rise to a section $\varphi$ of $\pi:\bigoplus CP_i\ja\to \bigoplus CP_i(\At)$ because $\nabla'$ yields a connection on any $A^{i_1}\otimes\dots\otimes A^{i_k}$ by the Leibnitz rule. Note that

\begin{equation}
\begin{split}
&[\varphi, b](a_0\otimes\dots\otimes a_n)=(\sum 1\otimes\dots\otimes \stackrel{i}{\nabla'}\otimes\dots\otimes 1)(\sum a_0\otimes\dots\otimes da_i\otimes \dots \otimes a_n+\\&+ \sum (-1)^i a_0\otimes\dots\otimes a_ia_{i+1}\otimes\dots\otimes a_n )-b(\sum a_0\otimes\dots\otimes \nabla'a_i\otimes\dots\otimes a_n)=\\&= \sum a_0\otimes\dots\otimes (\nabla'd-d\nabla')a_i\otimes\dots\otimes a_n
\end{split}
\end{equation} 

This computation shows that $[\varphi, b+uB]=\cL_{\ks}$ (because, clearly $[\varphi,B]=0$) where $\cL_{\ks}:\CPt(\At)\to \CPt(I^1\ja^{\#}/I^2\ja^{\#})$. By \cite{l}, \S 4.1.11 we have $[u^{-1}\iota_{\ks},b+uB]=\cL_{\ks}$. Hence, $\varphi-u^{-1}\iota_{\ks}$ is a morphism of complexes and a section of $\pi$ so, in the derived category, $\pi^{-1}=\varphi-u^{-1}\iota_{\ks}$. Applying $m$ we get precisely the map (\ref{GCon}) considered as a map $\CPt(\At)\to p_{1*}p^{*}_2\CPt(\At)$.
 
\end{proof} 
 
\subsection{Proof of Theorem \ref{cBandCP}.}
As explained in  (\cite{ka3}, \S 3.3 and \S 5.1) we have a canonical morphism
\begin{equation}\label{eqkconst0}
\cB(\At)^\hush \to   \pi_{(-2(p-1), 0]}^\flat i_p^*  \At^\hush 
\end{equation}
in $D(\Lambda, R)$. This induces a morphism of cyclic complexes
$$CC_{\idot}(\cB(\At))= CC_{\idot}(\cB(\At)^\hush ) \to CC_{\idot}( \pi_{(-2(p-1), 0]}^\flat i_p^*  \At^\hush ),$$
\begin{equation}\label{eqkconst}
V_{-1}CC_{\idot}(\cB(\At)) \to CC_{\idot}( \pi_{(-2(p-1), -1]}^\flat i_p^*  \At^\hush )\iso V_{[-p+2, -1]} \overline CP_{\idot}(\At)
\end{equation}
We have to check that (\ref{eqkconst}) factors through $V_{[-p+2, -1]}CC_{\idot}(\cB(\At))$ 
and that the resulted morphism is a quasi-isomorphism.

Recall that any complete resolution (in the sense of \S \ref{sstcc}) has the  structure of an algebra over an $E_\infty$ operad (see \cite{Lan}, \S 2, for a construction of this operad).   This makes  $\cB(R)^\hush$ and $\pi_{(-2(p-1), 0]}^\flat i_p^*  R^\hush$  into  $E_\infty$ algebras 
in the category of complexes over $Fun(\Lambda, R)$ and $\cB(\At)^\hush $,    $\pi_{(-2(p-1), 0]}^\flat i_p^*  \At^\hush$ are modules over these algebras respectively. The morphism \ref{eqkconst0}
can be promoted to 
\begin{equation}\label{eqkconst1}
\cB(\At)^\hush  \stackrel{L}{\otimes}_{\cB(R)^\hush} \pi_{(-2(p-1), 0]}^\flat i_p^*  R^\hush \to   \pi_{(-2(p-1), 0]}^\flat i_p^*  \At^\hush 
\end{equation}
Moreover, if we endow the left-hand side of (\ref{eqkconst1}) with the filtration induced by the canonical filtration on $\pi_{(-2(p-1), 0]}^\flat i_p^*  R^\hush $ and the right-hand side with $\tau^{dec}$, then
(\ref{eqkconst1}) is a filtered quasi-isomorphism. Pass to mixed complexes:
\begin{equation}\label{eqkconst2}
C(\cB(\At))  \stackrel{L}{\otimes}_{C(\cB(R))} C(\pi_{(-2(p-1), 0]}^\flat i_p^*  R^\hush ) \to   C(\pi_{(-2(p-1), 0]}^\flat i_p^*  \At^\hush) 
\end{equation}
Now Theorem \ref{cBandCP} follows from an easy Lemma below.
\begin{lm} 
The homomorphism of $E_\infty$ algebras 
$$C(\cB(R)) \to C(\pi_{(-2(p-1), 0]}^\flat i_p^*  R^\hush )$$
induces a quasi-isomorphism
$$\tau_{(-2(p-1), 0]}C(\cB(R)) \iso C(\pi_{(-2(p-1), 0]}^\flat i_p^*  R^\hush ).$$
\end{lm}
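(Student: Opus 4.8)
The plan is to reduce the statement to an explicit computation for the base algebra $R$ and to compare the two sides through Kaledin's canonical filtration. Since $R$ is a DG algebra over itself, its Kodaira--Spencer class vanishes, so Theorem \ref{cris} (applied to $\At=R$) identifies $\cB(R)$ with the trivial square-zero extension $F^*R[\mu]$ obtained by adjoining a free generator $\mu$ in degree $-1$ to $F^*R$, equipped with the Frobenius pullback connection. Because the Cartier transform is monoidal, its Hochschild mixed complex is the Frobenius pullback of the one recalled in the derivation of Theorem \ref{fmth}: one has
$$C(\cB(R))\iso F^*R\langle \mu, B\mu\rangle,$$
the divided power algebra with zero differential and Connes' operator $B\bigl((B\mu)^{[m]}\bigr)=0$, $B\bigl(\mu (B\mu)^{[m]}\bigr)=(m+1)(B\mu)^{[m+1]}$. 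Thus, after setting $B\mu=u^{-1}$, the homology is freely generated over $F^*R$ by the even classes $u^{-m}$ and the odd classes $\mu u^{-m}$ for $m\geq 0$.

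Next I would describe the right-hand side. By construction $\pi^\flat_{(-2(p-1),0]} i_p^* R^\hush$ is assembled from the Tate cohomology of $C_p$ acting on $R^{\otimes p}$ and carries the filtration $\tau^{dec}_{\leq \idot}$ of \S\ref{tatefilt}. Passing to mixed complexes and applying formula (\ref{fitltdecgrv}), its associated graded pieces are
$$\Gr^\tau_i\, C\bigl(\pi^\flat_{(-2(p-1),0]} i_p^* R^\hush\bigr)\iso F^*C(R)\otimes \vH^i(C_p,\bF_p)[-i],$$
and since $R$ is viewed as an algebra over itself, $C(R)\iso R$ is concentrated in degree $0$. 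Hence each graded piece is $F^*R\otimes \vH^i(C_p,\bF_p)[-i]$, and the truncation $(-2(p-1),0]$ retains exactly the summands in this range, which under $\vH^*(C_p,\bF_p)\iso \bF_p[u,u^{-1},\epsilon]$ are spanned by the monomials $u^{-m}$ and $\epsilon u^{-m}$ in the prescribed degrees.

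With both computations in hand I would argue as follows. The $E_\infty$ homomorphism of the statement is filtered for the two copies of $\tau^{dec}$, this being automatic from the functoriality of the canonical filtration, as both sides descend from the same filtered Tate construction and the map is induced by (\ref{eqkconst0}). It therefore suffices to prove that the map is a quasi-isomorphism on each associated graded quotient. On $\Gr^\tau_i$ both sides become $F^*R\otimes \vH^i(C_p,\bF_p)[-i]$, and the induced map is the canonical isomorphism of (\ref{fitltdecgrv}); hence it is a quasi-isomorphism throughout the range $(-2(p-1),0]$. A d\'evissage over the finitely many filtration steps, identical to the argument proving Lemma \ref{tateexact}, then upgrades this to a quasi-isomorphism of the totalizations, giving $\tau_{(-2(p-1),0]}C(\cB(R))\iso C\bigl(\pi^\flat_{(-2(p-1),0]} i_p^* R^\hush\bigr)$.

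The main obstacle is the bookkeeping that matches the two descriptions of the truncation, namely verifying that Connes' operator $B$ on the divided power algebra $F^*R\langle\mu,B\mu\rangle$ corresponds under the $E_\infty$ map to multiplication by the periodicity class $u$ in the Tate cohomology ring $\bF_p[u,u^{-1},\epsilon]$, so that $B\mu\leftrightarrow u^{-1}$ and $\mu\leftrightarrow \epsilon$ carry the weight range on the left precisely onto the Tate-degree range $(-2(p-1),0]$ on the right. This identification of the cyclic structure with the multiplicative Tate periodicity, together with the careful tracking of the factor $2$ arising from $\deg u=2$, is where the argument demands the most care; once it is secured, the comparison on associated graded pieces is immediate from (\ref{fitltdecgrv}).
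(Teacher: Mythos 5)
First, note that the paper itself offers no proof of this lemma: it is stated as ``an easy Lemma'' and immediately followed by the next section, so your proposal can only be judged on its own merits. Your computations of the two sides are correct and are the natural starting point: since the Kodaira--Spencer class of $R$ over itself vanishes, Theorem \ref{cris} gives $\cB(R)\cong F^*R[\mu]$ with the Frobenius pullback connection, whence $C(\cB(R))$ is the divided power algebra $F^*R\langle\mu,B\mu\rangle$ with the stated $B$-operator; and by (\ref{fitltdecgrv}) the associated graded of the right-hand side consists of the pieces $F^*C(R)\otimes\vH^i(C_p,\bF_p)[-i]\simeq F^*R[-i]$ for $i\in(-2(p-1),0]$, so the two homologies are abstractly isomorphic in the truncation range.

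The decisive step, however, is asserted rather than proven, and as stated it is not even well posed. You claim the map is ``filtered for the two copies of $\tau^{dec}$'' and induces ``the canonical isomorphism of (\ref{fitltdecgrv})'' on each graded piece. But $C(\cB(R))$ carries no $\tau^{dec}$-filtration with $2(p-1)$ graded pieces matching those of the right-hand side: the only filtration it inherits functorially is the $\cV$-filtration coming from the two-step filtration $F^*R[1]\subset\cB(R)$, and its $(-m)$-th graded piece has homology $F^*R$ in \emph{two} adjacent degrees (carried by the classes $(B\mu)^{[m]}$ and $\mu(B\mu)^{[m]}$), so the graded pieces of the two sides do not correspond one-to-one and ``the induced map on $\Gr^\tau_i$'' has no meaning for the left-hand side. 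More seriously, even after repairing the indexing, the assertion that the induced map on graded pieces is an isomorphism is precisely the content of the lemma: Kaledin's map (\ref{eqkconst0}) is a morphism in a derived category whose construction only controls the top two filtration steps, and functoriality alone does not prevent it from annihilating the classes $(B\mu)^{[m]}$ and $\mu(B\mu)^{[m]}$ for $m\geq 1$. What rules this out is multiplicativity: in the truncation range one has $(B\mu)^{[m]}=(B\mu)^m/m!$ for $m\leq p-2$, the map on homology is a ring map commuting with $B$, and it remains to check that $\mu$ maps to a unit multiple of $\epsilon$ and $B\mu$ to a unit multiple of the periodicity class $u^{-1}$ --- that is, the compatibility of Connes' operator with Tate periodicity. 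This is exactly what you label ``the main obstacle'' and then defer; since everything else in your argument is either formal or already contained in the paper, deferring it means the proof is missing its essential step, not a bookkeeping detail.
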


\section{The local monodromy theorem}\label{SLMT} 
In this section we prove  Theorem \ref{lmth}  in a stronger and more general form.
We start by recalling some results of Katz from (\cite{katz1}).

\subsection{Katz's Theorem.} Let $S$ be a smooth geometrically connected complete curve over a field $K$ of characteristic $0$, $K(S)$ the field of rational functions on $S$, and let $E$ be a finite-dimensional vector space
over $K(S)$ with  a  $K$-linear connection 
$$\nabla: E \to E \otimes \Omega^1_{K(S)/K}.$$
Recall  that  $\nabla$ is  said to have regular singularities  if $E$ can be extended to a vector bundle $\cE$ over $S$ such that  $\nabla$ extends to a connection on 
 $\cE$, which has  at worst simple poles at some finite closed subset $D\subset S$:
$$\overline \nabla: \cE \to \cE \otimes \Omega^1_{S}(\log D).$$
One says that the local monodromy of $(E, \nabla)$ is quasi-unipotent if the pair  $(\cE, \overline \nabla)$ as above can be chosen so that the residue of $\overline \nabla $
 $$ \Res \overline \nabla:  \cE_{| D}\to  \cE_{| D} $$
 has rational eigenvalues\footnote{One can show (see {\it e.g.}, \cite{katz1}, \S 12) that if $ \Res \overline \nabla$ has  rational eigenvalues for one extension then it has  rational eigenvalues for every extension  $(\cE, \overline \nabla)$ of $(E, \nabla)$.}. Let  $\Res \overline \nabla = D +N$ ,  with $[D,N]=0$,  be the Jordan decomposition of  $\Res \overline \nabla$ as a sum of a semi-simple operator $D$ and a nilpotent operator
 $N$. If the local monodromy of $(E, \nabla)$ is quasi-unipotent we say its exponent of nilpotence is $\leq \nu$ if $N^\nu =0$.

If $K=\bC$ then the category of finite-dimensional $K(S)$-vector spaces with $K$-linear connections with regular singularities and  quasi-unipotent local monodromy is equivalent to
the category of local systems (in the topological sense)  over $S$ take off  finitely many points whose local monodromy around every puncture is  quasi-unipotent
({\it i.e.}, all its eigenvalues are roots of unity).  The exponent of nilpotence of local monodromy is the size of its largest Jordan block.  

In  (\cite{katz1}, Th. 13.0.1), Katz proved the following result.

 \begin{theo}[Katz]
Let $C$ be a smooth scheme of relative dimension $1$ over a domain $R$ which is  finitely generated (as a ring) over $\bZ$,  with fraction field $K$ of characteristic zero. Assume 
that the generic fiber of $C$ is geometrically connected. Let $(M,\nabla)$ be a locally free $\cO_C$-module with a connection $\nabla: M \to M\otimes \Omega^1_{C/R}$.  Assume that $(M,\nabla)$ is globally nilpotent of nilpotence $\nu$, that is, for any prime number $p$, the $\cO_{C\otimes \bF_p}$-module $M\otimes \bF_p$  with $R\otimes \bF_p$-linear connection admits a filtration $$0=V_0(M\otimes \bF_p) \subset \cdots \subset V_\nu (M\otimes \bF_p)=  M\otimes \bF_p$$
 such that the $p$-curvature of each successive quotient $V_i/V_{i-1}$ is $0$. Then the pullback  $M\otimes_{\cO(C)} K(C)$ of $M$ to the generic point of $C$  has regular singularities and quasi-unipotent local monodromy of exponent $\leq \nu$.
\end{theo}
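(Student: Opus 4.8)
The statement is Katz's theorem, and the plan is to recover it from Turrittin's formal classification of meromorphic connections on a punctured formal disk, together with the principle that the $p$-curvature controls both the irregularity and the local exponents of a connection. Since $C$ is smooth of relative dimension one over a domain $R$ finitely generated over $\bZ$, the reductions $M\otimes\bF_p$ with their $\bF_p$-linear connections are defined for every prime $p$ outside a finite bad set, and the global nilpotence hypothesis supplies, for each such $p$, a filtration of length $\nu$ all of whose graded pieces have vanishing $p$-curvature. The real task is to convert this characteristic-$p$ data, available for infinitely many $p$ simultaneously, into the complex-analytic conclusion about the generic-point restriction $M\otimes_{\cO(C)}K(C)$.

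First I would localize the problem. Fix a closed point $s$ of the smooth compactification of the generic curve lying outside $C$, pass to the completed local field at $s$, and study the formal connection that $(M,\nabla)$ induces there. By Turrittin's theorem, after a finite ramified base change $t\mapsto t^{1/e}$ this formal connection becomes a direct sum of pieces of the shape $(\text{regular singular})\otimes\exp(f)$ with $f$ a Laurent tail in $t^{-1}$; by the definitions recalled above, the connection has regular singularities at $s$ exactly when every exponential tail $f$ vanishes, and its local monodromy is quasi-unipotent exactly when, in addition, the residue of the regular-singular factors has rational eigenvalues.

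I would obtain regular singularities from the growth of the $p$-curvature: an irregular formal connection has $p$-curvature whose pole order grows with the irregularity and whose leading term is not nilpotent, which contradicts the existence, for all but finitely many $p$, of a finite filtration with $p$-curvature-free quotients. Hence the exponential tails vanish and the connection is regular singular. For quasi-unipotence I would compare the $p$-curvature of a regular-singular connection $\nabla=d+\Theta\,\frac{dt}{t}$ with its residue $\Theta$: along the Euler field the $p$-curvature is $\Theta^p-\Theta$ up to the usual Frobenius twist, so its nilpotence forces each eigenvalue $\alpha$ of $\Theta$ to reduce into the prime field modulo $p$. As the exponents are algebraic and this reduction condition holds for a set of primes of density one, a Chebotarev-type argument forces each $\alpha$ to be rational; then the local monodromy eigenvalues $e^{2\pi i\alpha}$ are roots of unity, which is quasi-unipotence. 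Finally, the length $\nu$ of the $p$-curvature filtration bounds the nilpotency index of the nilpotent part $N$ of the residue, giving $N^\nu=0$ and hence exponent of nilpotence $\leq\nu$.

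The main obstacle is exactly this dictionary: making rigorous the implication that nilpotence of the $p$-curvature, uniformly in $p$, forces regular singularities with rational exponents and Jordan blocks of size at most $\nu$. This rests on Turrittin's formal decomposition, on Katz's $p$-adic estimates relating irregularity and exponents to the $p$-curvature, and, most delicately, on a spreading-out argument ensuring that the formal-local data at $s$ is controlled uniformly across all the primes $p$ by a single model over $R$. The substance of the theorem lies in assembling this uniform control rather than in any individual local computation; accordingly, in the body of the paper we invoke Katz's theorem directly and use only its statement.
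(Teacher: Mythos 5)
Your proposal ends exactly where the paper does: the paper never proves this statement at all---it is quoted from Katz (\cite{katz1}, Th.~13.0.1) and invoked as a black box in the proof of Theorem \ref{lmthbis}, which is precisely your final move as well. Your intermediate sketch (Turrittin's formal decomposition, non-nilpotent $p$-curvature of exponential parts forcing regularity, the $\Theta^p-\Theta$ computation plus a Chebotarev-type density argument for rationality of the exponents, and the filtration length bounding the Jordan block size) is a fair outline of Katz's own argument in \cite{katz1}, so there is nothing to correct.
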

\subsection{Monodromy Theorem.}    Now we can prove the main result of this section.
\begin{Th}\label{lmthbis}
Let $\At$ be a smooth and proper  DG algebra over $K(S)$ and let $d$ be a non-negative integer such that  
 \begin{equation}\label{dimboundp}
HH_m(\At, \At)=0, \;  \text{for every m with}  \; |m| > d.
\end{equation}
 Then the Gauss-Manin connection on the relative periodic cyclic homology $HP_* (\At)$ has regular  singularities  
 and  quasi-unipotent local monodromy of exponent $\leq d+1$. 
 \end{Th}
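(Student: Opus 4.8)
The plan is to reduce the assertion to the theorem of Katz recalled above (\cite{katz1}, Th.~13.0.1), the global nilpotence hypothesis being supplied by Corollary~\ref{nilpotencygeneral}. Throughout we work with the $K$-linear Gauss--Manin connection $\nabla\colon HP_*(\At)\to HP_*(\At)\otimes_{K(S)}\Omega^1_{K(S)/K}$, which is defined on the finite-dimensional $K(S)$-vector space $HP_*(\At)$ ($\At$ being smooth and proper).

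First we spread the situation out. Since $\At$ is quasi-isomorphic to a semi-free DG algebra with finitely many generators and relations, all the data descend to a finitely generated $\bZ$-subdomain $R_0\subset K$: after enlarging $R_0$ we obtain a smooth affine relative curve $C\to\Spec R_0$ whose function field $\Fract(\cO(C))$ becomes $K(S)$ after the extension of constants $\Fract(R_0)\to K$, together with a smooth and proper DG algebra $\cA$ over $\cO(C)$ and a quasi-isomorphism $\cA\otimes_{\cO(C)}K(S)\iso\At$. Shrinking $\Spec R_0$ (inverting finitely many elements) we may assume in addition that $R_0$, and hence $\cO(C)$, is smooth over $\bZ$; that $HH_m(\cA,\cA)=0$ for $|m|>d$; and, homological properness making $\Ct(\cA,\cA)$ perfect over $\cO(C)$, that $HP_*(\cA)$ is a locally free $\cO(C)$-module whose formation commutes with base change and which carries the relative Gauss--Manin connection $\nabla\colon HP_*(\cA)\to HP_*(\cA)\otimes\Omega^1_{C/R_0}$ restricting to the given one over $K(S)$.

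Next we check that $(HP_*(\cA),\nabla)$ is globally nilpotent of nilpotence $d+1$, apply Katz, and descend. Fix an odd prime $p$ not inverted in $R_0$ for which $\cA\otimes\bF_p$ is still smooth and proper over $\cO(C\otimes\bF_p)$ with $HH_m=0$ for $|m|>d$; only finitely many primes are thereby excluded, and these we invert in $R_0$. Now $\cO(C\otimes\bF_p)$ is smooth over the perfect field $\bF_p$, so Corollary~\ref{nilpotencygeneral} applied to $\cA\otimes\bF_p$ over $\bF_p$ yields a filtration of length $d+1$ on $HP_*(\cA\otimes\bF_p)=HP_*(\cA)\otimes\bF_p$, stable under the $\bF_p$-linear Gauss--Manin connection and with vanishing $p$-curvature on each quotient. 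Since the $R_0$-relative connection is obtained from the $\bF_p$-linear one by the projection $\Omega^1_{(C\otimes\bF_p)/\bF_p}\twoheadrightarrow\Omega^1_{(C\otimes\bF_p)/(R_0\otimes\bF_p)}$, this same filtration witnesses vanishing of the $R_0$-relative $p$-curvature on its quotients, which is exactly global nilpotence of nilpotence $d+1$ in Katz's sense. Katz's theorem then gives that $HP_*(\cA)\otimes_{\cO(C)}\Fract(\cO(C))$ has regular singularities and quasi-unipotent local monodromy of exponent $\leq d+1$; as $K(S)$ arises from $\Fract(\cO(C))$ by extension of constants along $\Fract(R_0)\to K$, and both regular singularities (existence of a logarithmic lattice) and quasi-unipotence (rationality of the eigenvalues of $\Res\nabla$) are insensitive to such an extension in characteristic $0$, the conclusion descends to $(HP_*(\At),\nabla)$.

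The main obstacle is the spreading-out step: one must guarantee that $HP_*$ together with its Gauss--Manin connection forms a \emph{locally free} $\cO(C)$-module whose formation commutes with the reductions $\otimes\bF_p$, so that Corollary~\ref{nilpotencygeneral} genuinely yields a filtration mod $p$. This rests on the perfectness of $\Ct(\cA,\cA)$ coming from homological smoothness and properness, which after shrinking $\Spec R_0$ makes the Hodge-to-de Rham spectral sequence degenerate uniformly and forces $HP_*(\cA)$ to be locally free with base-change-compatible formation; the remaining subtlety, namely reconciling the $\bF_p$-linear $p$-curvature produced by Corollary~\ref{nilpotencygeneral} with the $R_0$-relative $p$-curvature required by Katz, is settled by the projection of $1$-forms used above.
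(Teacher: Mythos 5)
Your proposal is correct and takes essentially the same route as the paper: spread the data out over a finitely generated $\bZ$-algebra via a relative curve, shrink the base so that $HH_*$ and $HP_*$ become free and commute with base change, apply Corollary~\ref{nilpotencygeneral} to the reductions mod $p$ to get global nilpotence of exponent $d+1$, and conclude by Katz's theorem. The one substantive difference is that the spreading-out of a smooth proper DG algebra to a finitely generated subring, which you assert informally (``finitely many generators and relations''), is a genuinely nontrivial result that the paper obtains by citing Theorem~1 of \cite{toen}; your additional care about inverting the finitely many bad primes and about deducing relative $p$-curvature vanishing from the $\bF_p$-linear statement via the projection of $1$-forms is sound and fills in points the paper leaves implicit.
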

 \begin{proof}

Using Theorem 1 from  \cite{toen}, there exists a finitely generated $\bZ$-algebra $R\subset K$, a smooth affine scheme $C$ of relative dimension $1$ over $R$ with a geometrically connected generic fiber,  
and a smooth proper DG algebra $\Bt$ over $\cO(C)$ together with an open embedding $C \otimes _R K \mono S$ of curves over $K$ and a quasi-isomorphism 
$\At=\Bt \otimes_{\cO(C)}  K(S)$ 
of DG algebras  over $K(S)$. We can choose $\Bt$ to be term-wise flat over $\cO(C)$.
Since the Hochschild homology $\bigoplus _i HH_i(\Bt, \Bt)$ of a smooth proper DG algebra is finitely generated over $\cO(C)$ replacing  $C$ by a dense open subscheme we may assume
that $\bigoplus _i HH_i(\Bt, \Bt)$ and  $HP_*(\Bt, \Bt)$   are free $\cO(C)$-modules of finite rank. It follows that
$$HH_i(\Bt , \Bt )\otimes_{\bZ} \bF_p \iso HH_i(\Bt \otimes _\bZ \bF_p, \Bt \otimes _{\bZ} \bF_p),$$
$$HH_i(\Bt, \Bt)\otimes_{\cO(C)} K(S) \iso HH_i(\At, \At). $$
Using the Hodge-to-de Rham spectral sequence  it follows that
the periodic cyclic homology also commutes with the base change.
Then by Cor.  \ref{nilpotencygeneral}  $(M,\nabla)=(HP_{*}(\Bt),\nabla_{GM})$
 satisfies the assumptions of the theorem of Katz with $\nu = d+1$ and we are done.
 \end{proof}

\end{document}